\newcommand{\abs}[1]{\left\vert #1 \right\vert}
\newcommand{\ceiling}[1]{\left\lceil #1 \right\rceil}
\DeclareMathOperator{\codim}{codim}
\DeclareMathOperator{\cx}{cx}  			
\DeclareMathOperator{\depth}{depth}         
\renewcommand{\emptyset}{\varnothing}
\renewcommand{\epsilon}{\varepsilon}
\newcommand{\floor}[1]{\left\lfloor #1 \right\rfloor}
\DeclareMathOperator{\hgt}{ht}  			
\DeclareMathOperator{\im}{im}
\newcommand{\iso}{\cong}
\newcommand{\kk}{\mathbbm{k}}		    
\newcommand{\longto}{\longrightarrow}
\DeclareMathOperator{\pd}{pd}  			
\renewcommand{\phi}{\varphi}
\DeclareMathOperator{\reg}{reg}			
\renewcommand{\setminus}{\smallsetminus}
\newcommand{\term}[1]{\textbf{#1}}	
\newcommand{\tensor}{\otimes}
\renewcommand{\vec}[1]{\mathbf{#1}}
\newcommand{\ZZ}{\mathbb{Z}}
\newcommand{\graph}[3][]{
	\begin{tikzpicture}[#1]         
	\newcommand*\points{#2}     
	\newcommand*\edges{#3}          
	\newcommand*\scale{0.75}          
	\foreach \x/\y/\z/\w in \points {
		\draw[fill = black!50] (\scale*\x,\scale*\y) circle [radius = 0.1] node[label = {[label distance = 0.05 cm]\w: $\z$}] (\z) {}; 
	}
	\foreach \x/\y in \edges { \draw (\x) -- (\y); }      
	\end{tikzpicture}
}
\newcommand{\labeledgraph}[3][]{
	\begin{tikzpicture}[#1]    
	\newcommand*\points{#2}     
	\newcommand*\edges{#3}          
	\newcommand*\scale{0.75}          
	\foreach \x/\y/\a/\z/\w in \points {
		\draw[fill = black!50] (\scale*\x,\scale*\y) circle [radius = 0.1] node[label = {[label distance = 0.05 cm]\w: $\z$}] (\a) {}; 
	}
	\foreach \x/\y in \edges { \draw (\x) -- (\y); }      
	\end{tikzpicture}
}
\newtheorem{thm}{Theorem}[section]
\newtheorem{prop}[thm]{Proposition}
\newtheorem{cor}[thm]{Corollary}
\newtheorem*{main-thm}{Main Theorem}
\newtheorem*{thm1}{Theorem~\ref{general:bound}}
\newtheorem*{thm2}{Theorem~\ref{bipartitebound}}
\theoremstyle{definition}
\newtheorem{example}[thm]{Example}
\newtheorem{rmk}[thm]{Remark}
\newtheorem*{notation}{Notation}
\numberwithin{equation}{section}
\numberwithin{table}{section}
\numberwithin{figure}{section}
\title{Depth and Singular Varieties of Exterior Edge Ideals}
\author[M. Mastroeni]{Matthew Mastroeni}
\address{Iowa State University, Department of Mathematics, Ames, IA, USA}
\email{mmastro@iastate.edu}
\author[J. McCullough]{Jason McCullough}
\address{Iowa State University, Department of Mathematics, Ames, IA, USA}
\email{jmccullo@iastate.edu}
\author[A. Osborne]{Andrew Osborne}
\address{Iowa State University, Department of Mathematics, Ames, IA, USA}
\email{osborne1@iastate.edu}
\author[J. Rice]{Joshua Rice}
\address{Iowa State University, Department of Mathematics, Ames, IA, USA}
\email{jar238@iastate.edu}
\author[C. Willis]{Cole Willis}
\address{University of Minnesota, School of Mathematics, Minneapolis, MN, USA}
\email{will7405@umn.edu}
\date{}
\begin{document}

\subjclass[2020]{Primary: 05E40, 15A75, 16E05; Secondary: 05C25, 13C70}

\keywords{Exterior algebra, edge ideal, depth, singular variety, free resolution}

\begin{abstract} Edge ideals of finite simple graphs are well-studied over polynomial rings.  In this paper, we initiate the study of edge ideals over exterior algebras, specifically focusing on the depth and singular varieties of such ideals.  We prove an upper bound on the depth of the edge ideal associated to a general graph and a more refined bound for bipartite graphs, and we show that both are tight.  We also compute the depth of several large families of graphs including cycles, complete multipartite graphs, spider graphs, and Ferrers graphs.  Finally, we focus on the effect whiskering a graph has on the depth of the associated edge ideal.
\end{abstract}
\maketitle

\begin{spacing}{1.2}
\section{Introduction}

Let $V$ be a vector space with basis $e_1,\ldots,e_n$ over a field $\mathbbm{k},$ and let $E = \bigwedge_\mathbbm{k} \langle e_1,\ldots,e_n \rangle$ be the exterior algebra of $V$. The standard basis elements $e_{k_1} \wedge \cdots \wedge e_{k_s}$ of $E,$ with $k_1 < \ldots < k_s,$ are called monomials in $E,$ and an ideal $I$ of $E$ generated  by monomials is called a monomial ideal. Given a finite, simple graph $G$ with vertex set $v_1,\ldots,v_n$, we consider the corresponding edge ideal $I_E(G) = (e_i\wedge e_j \mid \{v_i,v_j\} \text{ is an edge of } G)$.  The corresponding ideals $I_S(G)$ over a polynomials ring $S$ are well-studied, where there are many results linking combinatorial properties of $G$ to algebraic properties of $I_S(G)$.  The situation over exterior algebras is the focus of this paper.

Monomial ideals more generally were studied by Aramova, Avramov, and Herzog.  It follows from \cite[1.3]{resolutions:of:monomial:ideals:over:exterior:algebras} that $I_S(G)$ and $I_E(G)$ have the same regularity for any graph $G$.  On the other hand, the depths of $E/I_E(G)$ and $S/I_S(G)$ can be quite different.  The aim of this paper is to bound, and in some cases compute exactly, the depth of $E/I_E(G)$ in terms of combinatorial properties of $G$.

Our main results include a tight general upper bound on depth of edge ideals.  

\begin{thm1}
Let $G$ be a graph on $n$ vertices, none of which are isolated.  Then
\[ \depth_S G \le n + 2 - 2\sqrt{n} \qquad\text{and}\qquad \depth_E G \le n + 1 - 2\sqrt{n}.\]
Moreover, when $n$ is a square, both sets of bounds are tight.
\end{thm1}
\noindent The first inequality recovers a result of H\`{a} and Hibi \cite[4.2]{HH21}.  
In the case of bipartite graphs, we prove a similar upper bound on the  depth of edge ideals.

\begin{thm2}
Let $G$ be a bipartite graph on $n$ vertices, none of which are isolated.  Then
\[ \depth_S G \le \left\lfloor \frac{n}{2} \right \rfloor \qquad\text{and}\qquad 
    \depth_E G \le \left\lfloor \frac{n}{2} \right \rfloor - 1.\]
Moreover, both bounds are tight.
\end{thm2}

We also compute the depths and singular varieties of exterior edge ideals of several large classes of graphs including complete graphs, complete multipartite graphs, cycles, paths, spiders, and star graphs.  We show how one can algorithmically compute the depth of any tree, although an exact formula is likely not possible; see Example~\ref{depth:of:tree}.  Finally, we investigate the effect of whiskering a graph on the depth of the associated edge ideal, which is distinct from the situation over a polynomial ring; see Corollary~\ref{depth:second:whiskering} and Remark~\ref{whisker:remark}.

The rest of this paper is organized as follows. Section~\ref{section:background} collects the necessary background and notation that we need for the remainder of the paper and includes several propositions that will be useful in our later proofs. In Section~\ref{section:singularvariety}, we compute the depth and singular varieties of some general families of graphs, such as cycle graphs, complete multipartite graphs, and whiskered graphs. In Section~\ref{section:boundsOnDepth}, we provide several upper bounds on the depth of edge ideals. In particular, we establish tight upper bounds over $E$ and $S$ of the depth of an edge ideal and a tight upper bound of the projective dimension of an edge ideal over $S$ and a tight upper bound of the complexity of an edge ideal over $E.$

\section{Background}\label{section:background}

\subsection{Depth and Singular Varieties}

Let $E = \bigwedge_\kk \langle e_1, \dots, e_n \rangle$ denote an exterior algebra over the field $\kk$ in $n$ variables.  We recall that $E$ has a $\kk$-basis consisting of all monomials $e_T = e_{i_1}e_{i_2} \cdots e_{i_r}$ for each subset $T = \{i_1 < i_2 < \cdots < i_r\} \subseteq [n] = \{1, 2, \dots, n\}$.  (We omit the wedge products in the remainder of the paper.)  There is a natural grading $E = \bigoplus_{r = 0}^n E_r$, where $E_r$ is the subspace spanned by all monomials $e_T$ with $\abs{T} = r$, and we call elements of $E_r$ homogeneous of degree $r$.  See \cite{Herzog:Hibi:monomial:ideals} for further details and any unexplained terminology.

Let $M$ be a graded $E$-module, and take $\ell \in E_1$. We say that $\ell$ is \term{$M$-regular} if 
\[ (0 :_M \ell) := \{ m\in M \mid \ell m = 0 \} = \ell M.\]
Otherwise $\ell$ is \term{$M$-singular}. The set of all $M$-singular elements in $E_1$ is called the \term{singular variety} of $M$, which we  denote by $V_E(M)$.  (This is called the rank variety of $M$ in  {\cite{resolutions:of:monomial:ideals:over:exterior:algebras}}.)  A sequence $\ell_1, \dots, \ell_r$ of linear forms in $E$ is called an \term{$M$-regular sequence} if $\ell_i$ is regular on $M/(\ell_1, \dots, \ell_{i-1})M$ for all $i \leq r$. The \term{depth} of $M$, denoted $\depth_E M$, is the maximum length of such a regular sequence. The following result gathers useful results about the singular varieties of modules over $E$.

\begin{thm}[{\cite[3.1, 3.2, 4.1 ]{resolutions:of:monomial:ideals:over:exterior:algebras}}] \label{singular:varieties}
Let $E$ be an exterior algebra in $n$ variables over an algebraically closed field $\kk$, and let $L$, $M$, and $N$ be finitely generated graded $E$-modules.  Then:
\begin{enumerate}[label = \textnormal{(\alph*)}]
    \item $V_E(M)$ is a cone in the vector space $E_1$.
    \item If $I \subseteq E$ is a monomial ideal, then $V_E(E/I)$ is a union of finitely many coordinate subspaces of $E_1$.
    \item $\depth_E M = \codim_{E_1} V_E(M)$
    \item If $0 \to L \to M \to N \to 0$ is an exact sequence of $E$-linear maps, then any one of the sets $V_E(L)$, $V_E(M)$, $V_E(N)$ is contained in the union of the other two.
    \item $V_E(M \tensor_\kk N) = V_E(M) \cap V_E(N)$
\end{enumerate}
\end{thm}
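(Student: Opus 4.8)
The unifying device throughout is to encode the singular condition homologically. For $\ell \in E_1$ we have $\ell^2 = 0$, so multiplication by $\ell$ makes every graded $E$-module $X$ into a $2$-periodic complex, and I would set $H(X,\ell) := \ker(\ell\colon X \to X)/\im(\ell\colon X \to X)$. Since $\ell X \subseteq (0:_X \ell)$ automatically, $\ell$ is $X$-regular precisely when $H(X,\ell) = 0$; thus $\ell \in V_E(X) \iff H(X,\ell) \neq 0$. Part (a) is then immediate: for $c \in \kk$ with $c \neq 0$ one has $(0:_M c\ell) = (0:_M \ell)$ and $(c\ell)M = \ell M$, so the singular locus is stable under nonzero scaling and contains the origin, i.e.\ it is a cone. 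For part (d), I would feed the short exact sequence $0 \to L \to M \to N \to 0$ into this $2$-periodic formalism: multiplication by $\ell$ commutes with the maps, and the long exact homology sequence of a short exact sequence of $2$-periodic complexes collapses, by periodicity, to an exact hexagon
\[ H(L,\ell) \to H(M,\ell) \to H(N,\ell) \to H(L,\ell) \to H(M,\ell) \to H(N,\ell). \]
Reading off this hexagon, whenever two of the three homology groups vanish the third does as well, which is exactly the statement that any one of $V_E(L), V_E(M), V_E(N)$ lies in the union of the other two.

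For part (e), the first task is to pin down the $E$-module structure on $M \otimes_\kk N$: it comes from the Hopf comultiplication on $E$, under which each $\ell \in E_1$ is primitive, so it acts on $M \otimes_\kk N$ as $\ell \otimes 1 + 1 \otimes \ell$ with the Koszul sign. A short computation with that sign shows this action still squares to zero, so $M \otimes_\kk N$ is again a $2$-periodic complex under $\ell$, in fact the total complex of the tensor product of $(M,\ell)$ and $(N,\ell)$. The K\"unneth theorem over the field $\kk$ then gives $H(M \otimes_\kk N, \ell) \cong H(M,\ell) \otimes_\kk H(N,\ell)$, which is nonzero exactly when both factors are nonzero. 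Translating back through the dictionary $\ell \in V_E(X) \iff H(X,\ell) \neq 0$ yields $V_E(M \otimes_\kk N) = V_E(M) \cap V_E(N)$.

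Part (b) I would prove in two steps. First, $V_E(M)$ is Zariski closed in $E_1 \cong \kk^n$: writing $\ell = \sum_i a_i e_i$, the maps $\ell\colon M_j \to M_{j+1}$ have matrix entries linear in the $a_i$, and since $\im(\ell) \subseteq \ker(\ell)$ always holds, $\ell$ is regular iff each of these maps attains its maximal possible rank; maximal rank is an open condition cut out by nonvanishing of minors, so the singular locus is the closed zero set of those minors. Second, when $M = E/I$ with $I$ a monomial ideal, the diagonal torus $(\kk^\times)^n$ acting by $e_i \mapsto t_i e_i$ consists of algebra automorphisms of $E$ preserving $I$, hence acts on $E/I$ and carries regular elements to regular elements; therefore $V_E(E/I)$ is stable under this torus. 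A Zariski-closed, torus-stable subset of affine space has a torus-stable, and hence monomial, radical ideal, whose zero set is a union of finitely many coordinate subspaces. This is where algebraic closedness of $\kk$ enters, to identify closed sets with radical ideals.

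The real work is part (c), and I expect the equality $\depth_E M = \codim_{E_1} V_E(M)$ to be the main obstacle. The plan is to characterize regular sequences geometrically: I would show that linear forms $\ell_1, \dots, \ell_r$ form an $M$-regular sequence if and only if they are linearly independent and every nonzero element of $W = \Span(\ell_1, \dots, \ell_r)$ is $M$-regular, i.e.\ $W \cap V_E(M) = \{0\}$. The forward direction requires showing a regular sequence stays regular under nonzero recombination, and the reverse direction amounts to proving that $M$ is free over the sub-exterior-algebra $\bigwedge W \subseteq E$ once $W \setminus \{0\}$ avoids the singular locus; this freeness statement, the analogue of a module being maximal Cohen--Macaulay along a regular subspace, is the technical heart. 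Granting this characterization, $\depth_E M$ becomes the maximal dimension of a linear subspace $W$ meeting the cone $V_E(M)$ only at the origin, and a standard dimension count in projective space, namely that a subspace of dimension $n - \dim V_E(M)$ can be chosen generically to miss $V_E(M)$ while no larger subspace can, identifies this maximum with $n - \dim V_E(M) = \codim_{E_1} V_E(M)$. The delicate points to nail down are the genericity argument over the given field and the freeness criterion over $\bigwedge W$.
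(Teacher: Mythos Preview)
The paper does not prove this theorem at all: it is stated as a background result with an explicit citation to \cite[3.1, 3.2, 4.1]{resolutions:of:monomial:ideals:over:exterior:algebras}, and no proof or sketch is given in the present paper. So there is no ``paper's own proof'' against which to compare your attempt.

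That said, your outline is essentially the standard route taken in the cited source. The homological encoding $H(X,\ell)$, the exact hexagon for part~(d), the K\"unneth argument for part~(e), and the torus-invariance argument for part~(b) are all the expected moves. You are also right that part~(c) is where the substance lies, and your identification of the key step---that $M$ is free over the subalgebra $\bigwedge W$ once $W\setminus\{0\}$ avoids $V_E(M)$---is accurate; in Aramova--Avramov--Herzog this is packaged via the equivalence of regularity of $\ell$ with finiteness of $\mathrm{Tor}^E(M,E/\ell E)$, which then feeds an induction on the length of the sequence. One small caution: in part~(e) you should be careful that the K\"unneth computation really matches the $2$-periodic (i.e.\ $\mathbb{Z}/2$-graded) homology rather than just the ordinary graded one, since the differential $\ell\otimes 1 + 1\otimes\ell$ mixes degrees; the argument goes through, but the bookkeeping deserves a line.
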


As an easy corollary, we prove a depth lemma for modules in a short exact sequence.

\begin{cor} \label{depth:sequence:inequality}
Let $E$ be an exterior algebra over an algebraically closed field $\kk$, and let $0 \to L \to M \to N \to 0$ be an exact sequence of finitely generated graded $E$-modules.  Then the depth of any one of the modules $L$, $M$, $N$ is at least the minimum of the depths of the other two, and equality holds if the other two modules have different depths.
\end{cor}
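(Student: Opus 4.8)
The plan is to deduce everything from Theorem~\ref{singular:varieties}, translating the statement about depths into a statement about codimensions of singular varieties via part (c), and then using part (d) on the containments among the three varieties. Write $V_L = V_E(L)$, $V_M = V_E(M)$, and $V_N = V_E(N)$, so that by part (c) we have $\depth_E L = \codim V_L$ and similarly for $M$ and $N$. Since codimension reverses containments, the inclusion $V_L \subseteq V_M \cup V_N$ gives $\codim V_L \ge \codim(V_M \cup V_N)$, and by part (d) each of the three varieties is contained in the union of the other two, so I get three such inequalities simultaneously.

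The key elementary fact I would isolate is that for any two subsets $A, B \subseteq E_1$ (here the coordinate-subspace unions from part (b)), the codimension of the union satisfies $\codim(A \cup B) = \min\{\codim A, \codim B\}$. For finite unions of linear subspaces this is immediate: the codimension of a union is the minimum of the codimensions of the pieces, since the union's largest-dimensional component governs its dimension. First I would record this observation, then apply it to each of the three containments from part (d). For instance, $V_L \subseteq V_M \cup V_N$ yields
\[ \depth_E L = \codim V_L \ge \min\{\codim V_M, \codim V_N\} = \min\{\depth_E M, \depth_E N\}, \]
and the two symmetric inequalities follow identically. This establishes the first assertion: each depth is at least the minimum of the other two.

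For the equality claim, suppose two of the modules have different depths; I must show the third depth equals the minimum of those two. Say $\depth_E M \ne \depth_E N$ and set $d = \min\{\depth_E M, \depth_E N\}$; I want $\depth_E L = d$. The inequality $\depth_E L \ge d$ is already in hand. For the reverse, I would use the containment running the other way: whichever of $M, N$ achieves the strictly larger depth has strictly larger codimension, so in the relevant union $V_M \cup V_N$ (or the analogous union containing $V_L$) the minimum codimension is attained \emph{strictly} by the module of smaller depth. The point is that the variety of smaller codimension cannot be absorbed into the union of the other two unless it contributes a component of that codimension, forcing the third codimension down to $d$. Concretely, one of the two containments $V_M \subseteq V_L \cup V_N$ or $V_N \subseteq V_L \cup V_M$ will pin $\codim V_L \le d$; I would check the case where $\depth_E N < \depth_E M$, so $\codim V_N < \codim V_M$, and use $V_N \subseteq V_L \cup V_M$ to conclude $\codim V_N \ge \min\{\codim V_L, \codim V_M\}$, which together with $\codim V_N < \codim V_M$ forces $\codim V_L \le \codim V_N = d$.

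The main obstacle is purely bookkeeping: one must handle all three roles of $L, M, N$ symmetrically and track carefully which containment from part (d) yields the needed reverse inequality in the equality case, since the hypothesis only guarantees that \emph{some} two of the depths differ. The substantive geometric content is entirely packaged in parts (c) and (d) together with the elementary codimension-of-a-union identity for finite unions of coordinate subspaces (part (b)), so no further hard analysis is required beyond a clean case analysis.
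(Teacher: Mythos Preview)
Your proof is correct and mirrors the paper's approach: both translate depth to (co)dimension of singular varieties via part~(c) of Theorem~\ref{singular:varieties} and use the containments in part~(d), with the equality case handled by feeding the first inequality back in a different direction. One quibble: your appeal to part~(b) is unnecessary and, strictly speaking, inapplicable here since the corollary is stated for arbitrary graded modules rather than quotients by monomial ideals; the identity $\codim(A\cup B)=\min\{\codim A,\codim B\}$ holds for any Zariski-closed cones in $E_1$ simply because $\dim(A\cup B)=\max\{\dim A,\dim B\}$, so no special structure is needed.
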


\begin{proof}We only prove the corollary statement for $L$ since the remaining arguments are identical. Suppose without loss of generality that $\depth_E M \leq \depth_E N.$ By part (c) of the preceding theorem
\[\dim V_E(N) = n - \depth_E N  \leq n - \depth_E M = \dim V_E(M).\] Part (d) of the preceding theorem yields 
\[\dim V_E (L) \leq \max\{ \dim V_E (M), \dim V_E (N) \} =\dim V_E(M).\] 
Therefore
\[ \depth_E M = n - \dim V_E(M) \leq  n - \dim V_E(L) = \depth_E L,\]
 proving the first claim. 

We now prove the second claim. Suppose that $\depth_E M < \depth_E N.$ Using the previous claim, we have the two inequalities 
\[\depth_E L \geq \min\{ \depth_E M, \depth_E N\} = \depth_E M  \]
 \[\depth_E M \geq \min\{ \depth_E L, \depth_E N\} = \depth_E L. \]
Therefore $\depth_E M = \depth_E L$.
\end{proof}

In practice, it is often easier to specify the singular variety of an $E$-module $M$ as the vanishing set of some collection of polynomials.  We let $S = \kk[x_1, \dots x_n]$ denote the ring of polynomial functions on $E_1$, where $x_i$ is the linear functional dual to $e_i$, and for any ideal $I \subseteq S$, we set $V(I) = \{ \ell \in E_1 \mid f(\ell) = 0 \; \text{for all}\; f \in I\}$.  The following result allows one to compute singular varieties over an exterior algebra with an added variable.

\begin{prop}[{\cite[4.1, 4.2]{LG-quadratic:quotients:of:exterior:algebras}}]
\label{depth:and:base:change}
Let $E'$ be an exterior algebra over an algebraically closed field $\kk$, and let $E = E'\langle e \rangle$ denote an exterior algebra in one more variable.  Denote by $S'$ and $S = S'[x]$ the polynomial rings over $\kk$ dual to $E'$ and $E$ respectively, and let $M$ be a finitely generated graded $E'$-module with $V_{E'}(M) = V(I)$ for some ideal $I \subseteq S'$.  Then:
\begin{enumerate}[label = \textnormal{(\alph*)}]
    \item $V_E(M) = V(IS)$, and $\depth_E M = \depth_{E'} M$.
    \item $V_E(M \tensor_\kk \kk\langle e \rangle) = V(x, IS)$, and $\depth_E M = \depth_{E'} M + 1$.
\end{enumerate}
\end{prop}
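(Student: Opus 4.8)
The plan is to reduce both parts to a direct computation of how a linear form acts, then convert statements about singular varieties into statements about depth via the codimension formula in Theorem~\ref{singular:varieties}(c). Throughout I would write a general linear form of $E$ as $\ell = \ell' + c e$ with $\ell' \in E'_1$ and $c \in \kk$, so that $E_1 = E'_1 \oplus \kk e$, and I would regard $M$ as an $E$-module by letting $e$ act as zero (equivalently, via the augmentation $E \to E'$ sending $e \mapsto 0$).

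For part (a), the first step is to observe that since $e$ annihilates $M$, the element $\ell$ acts on $M$ exactly as $\ell'$ does, i.e. $\ell m = \ell' m$ for all $m \in M$. Consequently $(0 :_M \ell) = (0 :_M \ell')$ and $\ell M = \ell' M$, so $\ell$ is $M$-regular over $E$ if and only if $\ell'$ is $M$-regular over $E'$. This shows $V_E(M) = V_{E'}(M) \oplus \kk e$, which in dual coordinates is precisely $V(IS)$, because extending $I$ to $S = S'[x]$ leaves the $x$-coordinate free. To finish, I would invoke Theorem~\ref{singular:varieties}(c): adjoining the free linear direction $\kk e$ to both the ambient space $E'_1$ and the variety $V_{E'}(M)$ preserves codimension, so $\depth_E M = \codim_{E_1} V_E(M) = \codim_{E'_1} V_{E'}(M) = \depth_{E'} M$.

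For part (b), I would first compute $V_E(\kk\langle e \rangle)$ by hand. Writing $\kk\langle e\rangle = \kk \cdot 1 \oplus \kk \cdot e$ with $E'_1$ acting as zero and $e$ acting by $1 \mapsto e$, $e \mapsto 0$, a short check shows that $\ell = \ell' + ce$ is regular precisely when $c \neq 0$: for $c \neq 0$ both the kernel and image of multiplication by $\ell$ equal $\kk e$, whereas for $c = 0$ multiplication by $\ell$ is zero while $\kk\langle e\rangle \neq 0$. Hence $V_E(\kk\langle e\rangle) = \{c = 0\} = V(x)$. After checking that the external $E = E' \tensor_\kk \kk\langle e\rangle$-module structure on $M \tensor_\kk \kk\langle e\rangle$ agrees with the comultiplication-induced action used in Theorem~\ref{singular:varieties}(e) — which holds because $e$ kills $M$ and $E'_1$ kills $\kk\langle e\rangle$ — I would apply that part to obtain $V_E(M \tensor_\kk \kk\langle e\rangle) = V_E(M) \cap V_E(\kk\langle e\rangle) = V(IS) \cap V(x) = V(x, IS)$.

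The depth formula in (b) then follows from the same codimension bookkeeping: the slice $V(x, IS) = V_{E'}(M) \oplus \{0\}$ has the same dimension as $V_{E'}(M)$ inside $E'_1$, but now sits in the one-larger space $E_1$, so its codimension jumps by one, giving $\depth_E(M \tensor_\kk \kk\langle e\rangle) = \depth_{E'} M + 1$. I expect the only genuinely delicate point to be the verification that Theorem~\ref{singular:varieties}(e) applies to $M \tensor_\kk \kk\langle e\rangle$ — namely, matching the external tensor-product module structure with the Hopf-diagonal one and keeping the sign conventions of the graded tensor product straight; everything else is routine manipulation with the codimension formula.
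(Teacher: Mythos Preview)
The paper does not supply its own proof of this proposition; it is quoted directly from \cite[4.1, 4.2]{LG-quadratic:quotients:of:exterior:algebras}. Your argument is correct and is essentially the natural one: in (a) the observation that $e$ acts as zero on $M$ immediately gives $V_E(M) = V_{E'}(M) \oplus \kk e = V(IS)$, and in (b) the direct computation of $V_E(\kk\langle e\rangle) = V(x)$ together with Theorem~\ref{singular:varieties}(e) yields the intersection. The point you flag as delicate --- that the external $E' \tensor_\kk \kk\langle e\rangle$-module structure on $M \tensor_\kk \kk\langle e\rangle$ agrees with the Hopf-diagonal one --- is indeed the only thing that needs checking, and your reasoning is right: since $e$ kills $M$ and $E'_1$ kills $\kk\langle e\rangle$, both structures reduce to $\ell \cdot (m \tensor n) = (\ell' m)\tensor n + (-1)^{\abs{m}} c\, m \tensor (en)$ for $\ell = \ell' + ce$. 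One small remark: the displayed statement of (b) contains a typo (``$\depth_E M$'' should read ``$\depth_E(M \tensor_\kk \kk\langle e\rangle)$''), and you have correctly proved the intended assertion.
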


For an $E$-module $M$, the \term{complexity} of $M$ is
\[ \cx_E M = \inf \{ c \in \ZZ \mid \beta^E_i(M) \leq \alpha i^{c-1} \text{ for some } \alpha \in \mathbb{R} \text{ and all } i \geq 1 \}. \]
(See Subsection \ref{subsection:free:resolutions} for a review of the Betti numbers $\beta_i^E(M)$ in the commutative case.)  The complexity of $M$ can be thought of as the size of the minimal free resolution of an $E$-module, similar to the projective dimension of an $S$-module.  There is an Auslander-Buchsbaum style theorem relating depth and complexity.

\begin{thm}{\cite[3.2]{resolutions:of:monomial:ideals:over:exterior:algebras}}\label{auslander:buchsbaum}
If $\mathbbm{k}$ is an infinite field and $M$ is a finitely generated $E$-module, then each maximal $M$-regular sequence has $\depth_E M$ elements, and
\[ \depth_E M + \cx_E M = n.\]
\end{thm}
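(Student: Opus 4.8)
The plan is to deduce the displayed identity from the single equality $\cx_E M = \dim V_E(M)$, since Theorem~\ref{singular:varieties}(c) already gives $\depth_E M = \codim_{E_1} V_E(M) = n - \dim V_E(M)$; adding these two relations yields $\depth_E M + \cx_E M = n$ at once. Thus the whole theorem rests on identifying the complexity of $M$, a number defined through the growth of the Betti sequence, with the dimension of the singular variety, a geometric object; the assertion about maximal regular sequences then follows from a separate bookkeeping argument.

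To prove $\cx_E M = \dim V_E(M)$, I would first invoke Koszul duality for the exterior algebra: the Yoneda algebra $\operatorname{Ext}_E^{\bullet}(\kk,\kk)$ is the polynomial ring $S = \kk[x_1,\dots,x_n]$, with each $x_i$ in cohomological degree $1$ and dual to $e_i$, so that $S$ is precisely the ring of functions on $E_1$ used throughout the section. For any finitely generated graded $M$, the total module $\operatorname{Ext}_E^{\bullet}(M,\kk)$ is then a finitely generated graded module over $S$, graded by cohomological degree. Because $\beta_i^E(M) = \dim_\kk \operatorname{Ext}_E^i(M,\kk)$ is exactly the value at $i$ of the Hilbert function of this $S$-module, the theory of Hilbert functions of finitely generated graded modules shows that the least $c$ for which $\beta_i^E(M)$ grows like $i^{c-1}$ is the Krull dimension of $\operatorname{Ext}_E^{\bullet}(M,\kk)$; that is, $\cx_E M = \dim \operatorname{Supp}_S \operatorname{Ext}_E^{\bullet}(M,\kk)$. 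The final step is to recognize this support, a closed cone in $\operatorname{Spec} S$, as the singular variety $V_E(M) \subseteq E_1$; this is the exterior-algebra analogue of the Avrunin--Scott theorem equating rank and cohomological support varieties. Combining these gives $\cx_E M = \dim V_E(M)$, and the formula follows.

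For the claim that every maximal $M$-regular sequence has exactly $\depth_E M$ terms, I would use two observations. First, by the definition of the singular variety a nonzero $\ell \in E_1$ is $M$-regular if and only if $\ell \notin V_E(M)$, so (using that $\kk$ is infinite) there is no $M$-regular linear form precisely when $V_E(M) = E_1$, i.e.\ when $\depth_E M = 0$. Second, if $\ell$ is $M$-regular then $\depth_E(M/\ell M) = \depth_E M - 1$, equivalently $\dim V_E(M/\ell M) = \dim V_E(M) + 1$. Granting these, let $\ell_1,\dots,\ell_r$ be a non-extendable $M$-regular sequence and set $M_i = M/(\ell_1,\dots,\ell_i)M$; the second observation gives $\depth_E M_r = \depth_E M - r$, while non-extendability together with the first observation forces $\depth_E M_r = 0$, whence $r = \depth_E M$.

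The main obstacle is the support-variety identification $\dim \operatorname{Supp}_S \operatorname{Ext}_E^{\bullet}(M,\kk) = \dim V_E(M)$, which demands both the finite generation of $\operatorname{Ext}_E^{\bullet}(M,\kk)$ over $S$ and the comparison of the cohomological support with the rank-variety description of $V_E(M)$. The depth-drop lemma $\depth_E(M/\ell M) = \depth_E M - 1$ is the other delicate point: the containment $V_E(M) \subseteq V_E(M/\ell M)$ is immediate from Theorem~\ref{singular:varieties}(d) applied to the short exact sequence $0 \to (M/\ell M)(-1) \to M \to M/\ell M \to 0$, but pinning the dimension increase to exactly one requires the long exact sequence in $\operatorname{Ext}_E^{\bullet}(-,\kk)$ attached to it, in which multiplication by $\ell$ is realized as the action of the cohomology operator $x_\ell \in S_1$; the regularity of $\ell$ then translates into a nonzerodivisor condition that forces the jump in Krull dimension to be exactly one.
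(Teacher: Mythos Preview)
The paper does not contain a proof of this statement: Theorem~\ref{auslander:buchsbaum} is quoted verbatim from \cite[3.2]{resolutions:of:monomial:ideals:over:exterior:algebras} and is used in the paper only as a black box (e.g., in the proofs of Theorems~\ref{general:bound} and~\ref{bipartitebound}). There is therefore no ``paper's own proof'' to compare your proposal against.

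That said, your sketch is broadly the right shape and is in fact close to how Aramova, Avramov, and Herzog argue in the cited source. A word of caution, however: you invoke Theorem~\ref{singular:varieties}(c), the equality $\depth_E M = \codim_{E_1} V_E(M)$, as an input. In this paper that equality is also cited from \cite[3.2]{resolutions:of:monomial:ideals:over:exterior:algebras}, i.e., from the very same theorem you are trying to prove. So if your goal is an independent proof rather than a reduction within the paper's internal logic, you cannot take (c) for granted; you must establish the regular-sequence characterization and the depth-drop lemma $\depth_E(M/\ell M) = \depth_E M - 1$ from first principles, and then the identity $\depth_E M + \cx_E M = n$ follows from $\cx_E M = \dim V_E(M)$ together with your inductive argument on regular sequences. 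You flag the two genuine obstacles correctly: the finite generation of $\operatorname{Ext}_E^\bullet(M,\kk)$ over $S$ (which comes from Koszul duality or the Cartan resolution) and the Avrunin--Scott-type identification of the cohomological support with the rank variety $V_E(M)$. Both are nontrivial and are precisely what the cited paper supplies.
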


\subsection{Exterior Edge Ideals}

Let $G$ be a finite, simple graph with vertices $v_1, \dots, v_n$.  The \term{exterior edge ideal} of $G$ is the ideal of the exterior algebra $E = \bigwedge_\kk \langle e_1, \dots, e_n \rangle$ defined by:
\[ 
I_E(G) = (e_ie_j \mid \{v_i, v_j\} \;\text{is an edge of}\; G). 
\]

Our aim is to study the depth and singular variety of the quotient ring $E/I_E(G)$ for various families of graphs.  In this subsection, we record some general observations about working with quotients by exterior edge ideals that will be useful in the subsequent sections.

\begin{notation}
Throughout the remainder of this paper, unless explicitly specified otherwise, $G$ will denote a (finite, simple) graph with vertices $v_1, \dots, v_n$, $E = \bigwedge_\kk\langle e_1, \dots, e_n\rangle$ denotes the corresponding exterior algebra over a fixed algebraically closed field $\kk$ of characteristic zero, $I = I_E(G) \subseteq E$ is the exterior edge ideal of $G$, and $S = \kk[x_1, \dots x_n]$ denotes the dual polynomial ring.

For simplicity, we denote the depth, complexity, and singular variety of $E/I_E(G)$ by $\depth_E G$, $\cx_E G$, and $V_E(G)$ respectively. We also write $\pd_S G$ and $\depth_S G$ for the projective dimension and depth of $S/I_S(G)$ over $S$, where $I_S(G) = (x_ix_j \mid \{v_i, v_j\} \;\text{is an edge of}\; G)$ is the more common commutative edge ideal in $S$.  

If $v_i$ and $v_j$ are vertices of $G$, we write $v_iv_j$ to denote that $\{v_i, v_j\}$ is an edge of $G$.  The \term{closed neighborhood} of the vertex $v_i$ is the set $N[v_i] = N(v_i) \cup \{v_i\}$ of neighbors of $v_i$ together with $v_i$.  For any unexplained graph-theoretic terminology, we refer the reader to \cite{West:graph:theory}.
\end{notation}

The following occurs frequently enough as a special case in the arguments of the rest of the paper  that it worth mentioning to avoid future confusion.

\begin{rmk}[The empty graph]
When $G$ is the empty graph, there are no vertices or edges of $G$ so that $E/I_E(G) = E= \kk$.  Since $E_1 = 0$, there are no linear forms with which to form a regular sequence so that $\depth_E G = 0$ and $V_E(G) = 0 = E_1$.
\end{rmk}

\begin{prop} \label{deleting:a:vertex} \label{adding:an:edge}
Let $v_i, v_j$ be a vertices of $G$, and suppose that $v_iv_j$ is an edge of $G$. Then:
\begin{enumerate}[label = \textnormal{(\alph*)}]
    \item $\depth_E G \geq \min\{\depth_E(G \setminus v_i), \depth_E(G \setminus N[v_i])\}$
    \item $\depth_E G \geq \min\{\depth_E(G \setminus v_iv_j), \depth_E(G \setminus (N[v_i] \cup N[v_j]))\}$
\end{enumerate}
\end{prop}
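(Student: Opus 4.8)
The plan is to prove both inequalities by feeding a short exact sequence built from multiplication by a suitable monomial into the depth lemma (Corollary~\ref{depth:sequence:inequality}), and then using Proposition~\ref{depth:and:base:change}(a) to convert depths over subalgebras into depths over $E$. In each part the content is a single colon-ideal computation, which I claim encodes exactly the deletion of a closed neighborhood. Throughout I will freely use that depth is invariant under degree shifts, since the multiplication maps below raise degree.

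For part (a) I would multiply by $e_i$. Sending $\bar f \mapsto \overline{e_i f}$ gives an injection $E/(I:e_i) \hookrightarrow E/I$ with image $(e_iE + I)/I$, hence the short exact sequence (up to a shift)
\[ 0 \to E/(I : e_i) \xrightarrow{\,\cdot e_i\,} E/I \to E/(I + (e_i)) \to 0. \]
I would identify the outer terms. Killing $e_i$ sends every edge at $v_i$ to zero, so $E/(I+(e_i)) \cong E'/I_{E'}(G\setminus v_i)$ where $E' = \bigwedge_\kk\langle e_k : k\neq i\rangle$, and by Proposition~\ref{depth:and:base:change}(a) its depth over $E$ is $\depth_E(G\setminus v_i)$. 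For the sub, I would establish $(I:e_i) = I + (e_k : v_k\in N[v_i])$: the inclusion $\supseteq$ is immediate from $e_i^2=0$ and $e_ie_k\in I$ for $v_k\in N(v_i)$; for $\subseteq$, writing $f = g + e_i h$ with $g,h\in E'$ gives $e_if = e_ig$, and the decomposition $I = I' \oplus e_i\big(I' + N\big)$, with $I' = I_{E'}(G\setminus v_i)$ and $N = (e_k : v_k\in N(v_i))$, shows $e_ig\in I$ forces $g\in I'+N$. Killing the closed-neighborhood variables then yields $E/(I:e_i) \cong E''/I_{E''}(G\setminus N[v_i])$, with $E''$ the exterior algebra on the vertices outside $N[v_i]$, of depth $\depth_E(G\setminus N[v_i])$ by Proposition~\ref{depth:and:base:change}(a). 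Applying Corollary~\ref{depth:sequence:inequality} to the middle module gives part (a).

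For part (b) I would set $I'' = I_E(G\setminus v_iv_j)$, so that $I = I'' + (e_ie_j)$, and multiply by $e_ie_j$ on $E/I''$. Since the cokernel is $E/(I'' + (e_ie_j)) = E/I$, this produces
\[ 0 \to E/(I'' : e_ie_j) \xrightarrow{\,\cdot e_ie_j\,} E/I'' \to E/I \to 0, \]
whose middle term has depth $\depth_E(G\setminus v_iv_j)$. The analogous colon computation—now decomposing $E = E''' \oplus E'''e_i \oplus E'''e_j \oplus E'''e_ie_j$ over $E''' = \bigwedge_\kk\langle e_k : k\neq i,j\rangle$ and using $e_ie_j f = e_ie_j f_{00}$—should give $(I'':e_ie_j) = I'' + (e_k : v_k\in N[v_i]\cup N[v_j])$. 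Killing these variables identifies $E/(I'':e_ie_j)$ with the exterior edge-ideal quotient of $G\setminus(N[v_i]\cup N[v_j])$, of depth $\depth_E\big(G\setminus(N[v_i]\cup N[v_j])\big)$. Applying Corollary~\ref{depth:sequence:inequality} to the \emph{quotient} term $E/I$ then yields part (b).

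The main obstacle is the colon-ideal bookkeeping, and it is most delicate in part (b): verifying the nontrivial inclusion requires tracking the four-fold decomposition of $E$ together with the signs from the wedge product, and checking that each product of a generator of $I''$ with a ring element contributes to the $e_ie_j$-component only through $e_ie_j\big(I_{E'''}(G\setminus\{v_i,v_j\}) + (e_k : v_k\in (N(v_i)\cup N(v_j))\setminus\{v_i,v_j\})\big)$. Once the two colon formulas are in hand, recognizing the resulting quotients as the exterior edge ideals of the deleted subgraphs and transferring their depths through Proposition~\ref{depth:and:base:change}(a) is routine, and the degree shifts in the multiplication maps cause no trouble.
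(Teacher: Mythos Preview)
Your proposal is correct and follows essentially the same approach as the paper: both parts use the same short exact sequences arising from multiplication by $e_i$ (resp.\ $e_ie_j$), identify the outer terms as the exterior edge-ideal quotients of $G\setminus v_i$ and $G\setminus N[v_i]$ (resp.\ $G\setminus v_iv_j$ and $G\setminus(N[v_i]\cup N[v_j])$), and then invoke Corollary~\ref{depth:sequence:inequality} together with Proposition~\ref{depth:and:base:change}. The paper simply writes down the exact sequences without justification, whereas you spell out the colon-ideal computations that make them exact; your decomposition $I = I' \oplus e_i(I'+N)$ is exactly what is needed, and the part~(b) bookkeeping you flag as the main obstacle goes through by the same mechanism.
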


\begin{proof}
If we set $W = N[v_i]$, $G' = G \setminus W$, and $G'' = G \setminus v_i$ and we let $E'$ and $E''$ denote the corresponding exterior algebras on the vertices of $G'$ and $G''$ respectively, then we have a short exact sequence of $E$-modules
\begin{equation} \label{deleting:a:vertex:sequence}
0 \longto \frac{E}{I_{E'}(G')E + (e_j \mid v_j \in W)} \stackrel{e_i}{\longto} \frac{E}{I_E(G)} \longto \frac{E}{I_{E''}(G'')E + (e_i)} \longto 0. 
\end{equation}
The first and last modules in the above exact sequence are isomorphic to $E'/I_{E'}(G')$ and $E''/I_{E''}(G'')$ respectively so that $\depth_E G \geq \min\{\depth_E G', \depth_E G''\}$ by Corollary \ref{depth:sequence:inequality}.  (Note that $\depth_E E'/I_{E'}(G') = \depth_{E'} E'/I_{E'}(G')$ by Proposition \ref{depth:and:base:change} so that there is no harm in writing $\depth_E (G'$.) The second depth inequality follows similarly from the short exact sequence
\[
0 \longto \frac{E}{I_{E'}(G')E + (e_j \mid v_j \in W)} \stackrel{e_ie_j}{\longto} \frac{E}{I_E(G'')} \longto \frac{E}{I_E(G)} \longto 0, 
\]
where $W = N[v_i] \cup N[v_j]$, $G' = G \setminus W$, and $G'' = G \setminus v_iv_j$.
\end{proof}

\begin{prop}\label{disjoint:union}
Suppose $G$ is a disjoint union $G = G' \sqcup G''$ for some subgraphs $G'$, $G''$.  Then
\[
V_E(G) = V_E(G') \cap V_E(G'')
\]
and $\depth_E G = \depth_E G' + \depth_E G''$. 
\end{prop}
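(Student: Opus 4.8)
The plan is to reduce everything to the tensor-product decomposition that a disjoint union induces on the corresponding exterior edge ideals, and then feed that decomposition into parts (c) and (e) of Theorem~\ref{singular:varieties} together with the base-change Proposition~\ref{depth:and:base:change}.

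First I would fix the partition of the vertex set into the vertices $A$ of $G'$ and the vertices $B$ of $G''$, and let $E' = \bigwedge_\kk\langle e_a \mid a \in A\rangle$ and $E'' = \bigwedge_\kk\langle e_b \mid b\in B\rangle$ be the corresponding sub-exterior algebras, so that $E \cong E'\tensor_\kk E''$. Because $G = G'\sqcup G''$ has no edge joining $A$ to $B$, every generator of $I_E(G)$ lies in $E'$ or in $E''$, whence $I_E(G) = I_{E'}(G')E + I_{E''}(G'')E$. The heart of the setup is then the isomorphism of $E$-modules
\[ E/I_E(G) \;\cong\; \bigl(E'/I_{E'}(G')\bigr)\tensor_\kk\bigl(E''/I_{E''}(G'')\bigr), \]
where each tensor factor is regarded as an $E$-module through the canonical surjection $E \to E'$ (resp.\ $E\to E''$) and the right-hand side carries the diagonal $E$-action used in Theorem~\ref{singular:varieties}(e).

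With this in hand, the singular-variety claim is immediate from Theorem~\ref{singular:varieties}(e): one gets $V_E(G) = V_E(E'/I_{E'}(G'))\cap V_E(E''/I_{E''}(G''))$, and Proposition~\ref{depth:and:base:change}(a), applied once for each variable of $B$ (resp.\ of $A$), identifies these two singular varieties with $V_E(G')$ and $V_E(G'')$ — concretely, the cylinders over the singular varieties $V_{E'}(G')$ and $V_{E''}(G'')$ computed in $E'_1$ and $E''_1$. For the depth statement I would pass to codimensions via Theorem~\ref{singular:varieties}(c). Since $V_E(G')$ is a cylinder in the $B$-directions and $V_E(G'')$ a cylinder in the $A$-directions, their intersection is the product $V_{E'}(G')\times V_{E''}(G'')$ inside $E_1 = E'_1\oplus E''_1$; codimension is additive for such a product of cones, so
\[ \depth_E G = \codim_{E_1} V_E(G) = \codim_{E'_1}V_{E'}(G') + \codim_{E''_1}V_{E''}(G'') = \depth_E G' + \depth_E G'', \]
using Proposition~\ref{depth:and:base:change}(a) once more to equate $\depth_{E'}G'$ with $\depth_E G'$ and likewise for $G''$.

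The one genuinely delicate step is establishing the displayed module isomorphism as an $E$-\emph{linear} map. As $\kk$-vector spaces both sides are clearly identified via $\bar u\tensor\bar v\mapsto\overline{uv}$, but checking $E$-linearity requires the diagonal (Hopf) action $e_i\cdot(u\tensor v) = (e_iu)\tensor v + (-1)^{\deg u}\,u\tensor(e_iv)$, and the sign $(-1)^{\deg u}$ is exactly what is needed to match the anticommutation $e_b u = (-1)^{\deg u}u e_b$ in $E$ when some $e_b$ with $b\in B$ is moved past an $A$-monomial $u$. I would verify this on the standard monomial basis; once the signs are seen to cancel, the remainder of the argument is formal, since the codimension additivity for a product of cones and the base-change identifications are routine given the results already quoted.
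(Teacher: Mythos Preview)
Your proposal is correct and follows essentially the same approach as the paper: both start from the tensor decomposition $E/I_E(G)\cong E'/I_{E'}(G')\tensor_\kk E''/I_{E''}(G'')$, invoke Theorem~\ref{singular:varieties}(e) and Proposition~\ref{depth:and:base:change} for the singular-variety statement, and then deduce the depth additivity. The only cosmetic difference is that the paper phrases the last step algebraically---writing $V_{E'}(G')=V(I')$, $V_{E''}(G'')=V(I'')$ and using $\hgt(I'S+I''S)=\hgt I'+\hgt I''$ for ideals in disjoint variable sets---whereas you phrase it geometrically as codimension additivity for the product $V_{E'}(G')\times V_{E''}(G'')$ inside $E'_1\oplus E''_1$; these are the same computation. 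Your extra care with the Hopf-algebra signs in verifying $E$-linearity of the tensor isomorphism is more detail than the paper supplies, but not a different idea.
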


\begin{proof}
If $E'$ and $E''$ denote the corresponding exterior algebras on the vertices of $G'$ and $G''$ respectively, then $E/I_E(G) \iso E'/I_{E'}(G') \tensor_\kk E''/I_{E''}(G'')$ so that the statement about singular varieties follows from Theorem \ref{singular:varieties}.  If $S$, $S'$, and $S''$ denote the polynomial rings dual to $E$, $E'$, and $E''$ and we write $V_{E'}(G') = V(I')$ and $V_{E''}(G'') = V(I'')$ for some ideals $I' \subseteq S'$ and $I'' \subseteq S''$, then $V_E(G) = V_E(G') \cap V_E(G'') = V(I'S + I''S)$ by Proposition \ref{depth:and:base:change}.  Since $I'S$ and $I''S$ are ideals generated by polynomials in disjoint sets of variables, it follows that
\[
\depth_E G = \hgt (I'S + I''S) = \hgt I' + \hgt I'' = \depth_E G' + \depth_E G''.  \qedhere
\]
\end{proof}

\begin{prop} \label{identifying:nonadjacent:vertices} \label{contracting:an:edge}
Let $v_i$ and $v_j$ be vertices of $G$, and set
\[
J = (e_k \mid v_k \in N[v_i] \cap N[v_j]) + (e_ae_b \mid v_a \in N(v_i), v_b \in N(v_j)). 
\] 
Then $(I_{E}(G):e_i-e_j) = I_E(G)+(e_i-e_j)+J$.
\end{prop}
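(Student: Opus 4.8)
The plan is to prove the two inclusions separately, writing $\ell = e_i - e_j$ and $I = I_E(G)$, with the substantive work going into the reverse inclusion via a componentwise analysis of $I$ relative to the distinguished variables $e_i, e_j$.

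For the inclusion $I + (\ell) + J \subseteq (I : \ell)$, I would check that each listed generator $x$ satisfies $x\ell \in I$. This is routine: $I\ell \subseteq I$; $\ell^2 = e_i^2 - e_ie_j - e_je_i + e_j^2 = 0$; for a generator $e_k$ with $v_k \in N[v_i]\cap N[v_j]$ each of $e_ke_i$ and $e_ke_j$ is either $0$ or a generator of $I$, so $e_k\ell \in I$; and for $e_ae_b$ with $v_a\in N(v_i)$, $v_b\in N(v_j)$ one has $e_ae_i, e_be_j \in I$, whence $e_ae_be_i, e_ae_be_j \in I$ and so $(e_ae_b)\ell \in I$.

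For the reverse inclusion I would first reduce. Since $I$, $(\ell)$, and $J$ are homogeneous, so is $(I:\ell)$, and it suffices to treat homogeneous $f$. Using the graded decomposition $E = E' \oplus \ell E'$ with $E' = \bigwedge_\kk\langle e_k : k\neq i\rangle$, write $f = g + \ell h$ with $g,h\in E'$; then $f\ell = g\ell$ and $f\in I+(\ell)+J \iff g\in I+(\ell)+J$, so I may assume $f = g \in E'$ is homogeneous of degree $m$ with $g\ell \in I$. Decomposing further via $E' = E''\oplus e_jE''$ where $E'' = \bigwedge_\kk\langle e_k : k\neq i,j\rangle$, I write $g = p + e_jq$ with $p,q\in E''$ and compute, in the decomposition $E = E''\oplus e_iE''\oplus e_jE''\oplus e_ie_jE''$, that $g\ell = (-1)^m(\ell p + e_ie_jq)$, whose four components are $(0,\,p,\,-p,\,q)$. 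The heart of the argument is then a characterization of membership in $I$ in these coordinates: writing $I_0\subseteq E''$ for the exterior edge ideal of $G\setminus\{v_i,v_j\}$, and $N_i = (e_a : v_a\in N(v_i),\, a\neq j)$, $N_j = (e_b : v_b\in N(v_j),\, b\neq i)$ as ideals of $E''$, I would show by sorting the generators of $I$ (edges avoiding $v_i,v_j$; edges at $v_i$; edges at $v_j$; and the edge $v_iv_j$ if present) that $a + e_ib + e_jc + e_ie_jd \in I$ exactly when $a\in I_0$, $b\in I_0+N_i$, $c\in I_0+N_j$, and $d\in I_0+N_i+N_j$, the last condition being vacuous when $v_iv_j$ is an edge since then $e_ie_j\in I$. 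Applied to $g\ell$, this yields precisely $p\in (I_0+N_i)\cap(I_0+N_j)$ together with the corresponding condition on $q$.

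Finally I would invoke the ideal identity
\[ (I_0 + N_i)\cap(I_0 + N_j) = I_0 + N_C + N_iN_j, \]
where $C = N(v_i)\cap N(v_j)$, $N_C = (e_k : v_k\in C)$, and $N_iN_j = (e_ae_b : v_a\in N(v_i),\, v_b\in N(v_j))$. As all ideals here are monomial, this reduces to a statement about a single monomial $e_T$: if $e_T\notin I_0$ lies in both $I_0+N_i$ and $I_0+N_j$, then $T$ meets both $N(v_i)$ and $N(v_j)$, and either $T$ meets $C$ (so $e_T\in N_C$) or one may choose $a\in T\cap N(v_i)$ and $b\in T\cap N(v_j)$ with $a\neq b$ (so $e_T\in N_iN_j$). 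Granting this, $p\in I_0 + N_C + N_iN_j \subseteq I + J$, since the generators of $N_C$ and of $N_iN_j$ are among those of $J$; and $e_jq\in I+(\ell)+J$ follows from the $q$-condition together with the relations $e_je_a \equiv e_ie_a \pmod{(\ell)}$ for $v_a\in N(v_i)$ and $e_je_b\in I$ for $v_b\in N(v_j)$, with the unconstrained case $v_iv_j\in G$ handled by $e_j\in J$. I expect the componentwise characterization of $I$ and the monomial identity to be the main obstacles: both demand careful bookkeeping of the generators of $I$ and $J$, but require no ideas beyond the structure of monomial ideals in the exterior algebra.
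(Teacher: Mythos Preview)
Your argument is correct and follows essentially the same route as the paper: both proofs decompose an element of the colon ideal according to its $e_i,e_j$-content, compute the product with $e_i-e_j$, and use the monomial structure of $I_E(G)$ to constrain the pieces. Your componentwise characterization of $I$ and the identity $(I_0+N_i)\cap(I_0+N_j)=I_0+N_C+N_iN_j$ simply package, in ideal-theoretic language, the same case analysis that the paper carries out monomial by monomial.
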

\begin{proof}
Set $L = I_E(G) + (e_i - e_j) + J$.  Observe that if $v_k$ is adjacent to both $v_i$ and $v_j,$ then both $e_ke_i$ and $e_ke_j$ are in $I_{E}(G)$ and so $e_k \in (I_E(G):e_i-e_j)$. (This includes the possibility $k = i, j$ since, for example, $e_i^2 = 0$.)  Similarly, if $v_a$ is adjacent to $v_i$ and $v_b$ is adjacent to $v_j,$ then $e_a e_b e_i, e_a e_b e_j \in I_E(G).$ Thus, $e_a e_b \in (I_E(G):e_i-e_j)$.  Since $(e_i - e_j)^2 = 0$, this shows that $L \subseteq (I_E(G):e_i-e_j)$.

Now, let $f\in (I_{E}(G):e_i-e_j)$.  To show that $f \in L$, we may write $f$ as a linear combination of monomials 
\[
f = 
e_ie_j\sum_T \lambda_{T,i,j}e_T + e_i\sum_T \lambda_{T,i}e_T
+e_j\sum_T \lambda_{T,j}e_T + \sum_T \lambda_Te_T,
\] 
where $\lambda_{T, i,j}, \lambda_{T, i}, \lambda_{T, j}, \lambda_T \in \kk$ and $T$ runs over all subsets of $[n]$ not containing $i$ or $j$ in each sum.  Without loss of generality, we may assume that none of the monomials of $f$ belong to $L$.  In particular, since $e_ie_j = (e_i-e_j)e_j \in L$ we can subtract the first sum from $f$ and assume $\lambda_{T, i, j} = 0$ for all $T$. Then we have
\[
(e_i - e_j)f = \sum_T (\lambda_{T, i} + \lambda_{T, j})e_ie_je_T + \sum_T \lambda_Te_ie_T - \sum_T \lambda_Te_je_T,
\]
where all monomials appearing on the right are distinct, and so, they must all belong to $I_E(G)$.  If $\lambda_T \neq 0$, then $e_ie_T, e_je_T \in I_E(G)$ so that either $e_T \in I_E(G)$ or there exist $a, b \in T$ such that $e_ie_a, e_je_b \in I_E(G)$.  Either way, we see that $e_T \in L$ contrary to our assumption, so we must have $\lambda_T = 0$ for all $T$.  

Similarly, if $\lambda_{T,i} + \lambda_{T,j} \neq 0$, then $e_ie_je_T \in I_E(G)$ implies there is an $a \in T$ such that $e_ie_a$ or $e_je_a$ belongs to $I_E(G)$.  Suppose without loss of generality that $e_ie_a \in I_E(G)$.  If $\lambda_{T, i} \neq 0$, then $e_ie_T \in L$ contrary to our assumption on $f$, so we must have $\lambda_{T, i} = 0$ so that $\lambda_{T, j} \neq 0$.  But then $e_je_T$ is a monomial of $f$ divisible by $e_je_a = e_ie_a - (e_i - e_j)e_a \in L$, which also contradicts our assumption.  Hence, we must have $\lambda_{T, i} + \lambda_{T, j} = 0$ for all $T$ so that
\[
f = \sum_T \lambda_{T, i}(e_i - e_j)e_T \in L. \qedhere
\]
\end{proof}

\begin{cor} \label{difference:is:regular}
With the same notation as in the preceding proposition, if $N[v_i] \cap N[v_j] = \emptyset$ and every vertex in $N(v_i)$ is adjacent to every vertex in $N(v_j)$, then $e_i-e_j$ is regular on $E/I_E(G)$. Furthermore, if $G/(v_i \sim v_j)$ denotes the graph obtained by identifying $v_i$ and $v_j$, then
\[
\depth_E G \geq \depth_E G/(v_i \sim v_j) + 1.
\]
\end{cor}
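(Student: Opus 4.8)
The plan is to separate the two assertions. For the first, I would unwind the definition of regularity in ideal-theoretic terms. Writing $\ell = e_i - e_j$, $I = I_E(G)$, and $M = E/I$, one checks directly that $(0 :_M \ell) = (I : \ell)/I$ while $\ell M = (I + (\ell))/I$, so that $\ell$ is $M$-regular precisely when $(I : \ell) = I + (\ell)$. The hypotheses are tailored to annihilate the ideal $J$ appearing in Proposition \ref{identifying:nonadjacent:vertices}: since $N[v_i] \cap N[v_j] = \emptyset$ there are no generators of the form $e_k$, and since every vertex of $N(v_i)$ is adjacent to every vertex of $N(v_j)$, each generator $e_a e_b$ already lies in $I$. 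Thus $J \subseteq I$, and Proposition \ref{identifying:nonadjacent:vertices} gives $(I : \ell) = I + (\ell) + J = I + (\ell)$, which is exactly the regularity condition. (Note that $\ell \neq 0$, since the emptiness of $N[v_i] \cap N[v_j]$ forces $v_i \neq v_j$.)

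For the depth inequality, I would first identify the quotient $M/\ell M = E/(I + (\ell))$. After a linear change of coordinates replacing $e_i, e_j$ by $e_i - e_j$ together with one complementary form, we may write $E = E'\langle e_i - e_j\rangle$, where $E' = E/(\ell)$ is the exterior algebra on $n-1$ variables obtained by identifying $e_i$ and $e_j$ to a single generator $e_w$. Under the quotient map, the generators of $I$ coming from edges at $v_i$ or $v_j$ become the generators $e_w e_c$ for $c$ with $v_c \in N(v_i) \cup N(v_j)$, while the remaining edge generators are unchanged; since $v_iv_j$ is a non-edge, no spurious $e_w^2$ relation is introduced. Hence $M/\ell M \cong E'/I_{E'}(G/(v_i \sim v_j))$ as $E'$-modules. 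Applying Proposition \ref{depth:and:base:change}(a) to this $E'$-module (on which the extra variable $e_i-e_j$ acts as zero) then yields $\depth_E(M/\ell M) = \depth_{E'}\bigl(E'/I_{E'}(G/(v_i\sim v_j))\bigr) = \depth_E G/(v_i \sim v_j)$.

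Finally I would promote regularity to the stated bound using Theorem \ref{auslander:buchsbaum}, which applies since $\kk$ is algebraically closed and hence infinite. If $\ell_1, \dots, \ell_r$ is a maximal $(M/\ell M)$-regular sequence, then prepending the $M$-regular element $\ell$ produces an $M$-regular sequence $\ell, \ell_1, \dots, \ell_r$ of length $r + 1 = \depth_E(M/\ell M) + 1$; since the maximal length of any $M$-regular sequence equals $\depth_E M$, we obtain $\depth_E M \geq \depth_E(M/\ell M) + 1 = \depth_E G/(v_i \sim v_j) + 1$.

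I expect the main obstacle to be the bookkeeping in the identification step: choosing coordinates so that $e_i - e_j$ becomes a genuine variable realizing $E = E'\langle e_i - e_j\rangle$, and verifying that the image of $I_E(G)$ in $E'$ is exactly $I_{E'}(G/(v_i \sim v_j))$ with no extra generators. Everything else is a routine translation of regularity into ideal quotients together with an application of the exterior Auslander--Buchsbaum formula.
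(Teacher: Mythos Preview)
Your proposal is correct and follows essentially the same route as the paper: both reduce regularity of $e_i - e_j$ to the equality $(I_E(G):e_i-e_j) = I_E(G) + (e_i-e_j)$ via Proposition~\ref{identifying:nonadjacent:vertices} by observing that the hypotheses force $J \subseteq I_E(G)$, and both obtain the depth inequality from the identification $(I_E(G), e_i-e_j) = (I_E(G/(v_i\sim v_j)), e_i-e_j)$. Your write-up simply makes explicit the steps the paper leaves implicit, namely the invocation of Proposition~\ref{depth:and:base:change}(a) to compare depths over $E$ and $E'$, and the use of Theorem~\ref{auslander:buchsbaum} to extend a maximal $(M/\ell M)$-regular sequence by $\ell$.
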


\begin{proof}
The extra assumptions imply that $J \subseteq I_E(G) + (e_i - e_j)$ so that ${(I_E(G) : e_i - e_j)} = I_E(G) + (e_i - e_j)$, which is equivalent to $e_i - e_j$ being regular on $E/I_E(G)$. If we set $H = G/(v_i \sim v_j)$, the second statement follows from the observation that $(I_E(G), e_i - e_j) = (I_E(H), e_i - e_j)$.
\end{proof}

\subsection{Free Resolutions and Graded Betti Numbers} \label{subsection:free:resolutions}

For modules that are quotients of the exterior algebra by a monomial ideal, we can detect depth using tools from commutative algebra whose definitions we recall in this subsection.  

If $I$ is an ideal generated by homogeneous polynomials in a polynomial ring $S = \kk[x_1, \dots, x_n]$ over a field, a \term{free resolution} of the quotient ring $R = S/I$ is an exact sequence of homomorphisms
\[ 
0 \to F_p \stackrel{d_p}{\to} F_{p - 1} \to \cdots \to F_1 \stackrel{d_1}{\to} F_0 \stackrel{d_0}{\to} R \to 0,
\]
where each $F_i \iso S^{r_i}$ is a free $S$-module (that is, the image of each map in the sequence is the kernel of the preceding map).   Concretely, we can represent each map $d_i$ as multiplication by a matrix with entries in $S$.  Every quotient ring $R$ has a free resolution that is \term{minimal} in the sense the entries of the matrices representing the maps $d_i$ are all homogeneous polynomials of positive degree, and this resolution is unique up to isomorphism.  

By imposing a grading on the free modules $F_i$ in the minimal free resolution of $R$ so that the maps $d_i$ preserve degrees, we can write each free module as a direct sum $F_i \iso \bigoplus_{j \in \ZZ} S(-j)^{\beta_{i,j}}$, where $S(-j)$ denotes the free graded $S$-module whose $d$-th graded component is $S(-j)_d = S_{d-j}$.  The numbers $\beta_{i,j}^S(R) := \beta_{i,j}$ that record the number of basis elements of $F_i$ of degree $j$ are then called the \term{graded Betti numbers} of $R$ over $S$.  When $I$ is a monomial ideal, the minimal free resolution of $R$ even admits a finer $\ZZ^n$-grading, where $\deg x_i = \vec{e}_i$ is the $i$-th standard basis vector of $\ZZ^n$.  In that case, for each $\vec{a} = (a_1, \dots, a_n) \in \ZZ^n$, we denote by $\beta^S_{i,\vec{a}}(R)$ the corresponding \term{multigraded Betti number} of $R$ over $S$. In particular, when $I$ is squarefree monomial and $\beta_{i,\vec{a}}^S(R) \neq 0$, it follows from the Taylor resolution, for example, that every entry of $\vec{a}$ is either 0 or 1.  See \cite{graded:syzygies} for further details.

The next result provides the important connection between measuring depth of monomial ideals over an exterior algebra and free resolutions over a polynomial ring.

\begin{thm}[{\cite[3.1, 3.2, 4.2]{resolutions:of:monomial:ideals:over:exterior:algebras}}] \label{depth:via:maximal:shifts}
Let $E$ be an exterior algebra in $n$ variables over an algebraically closed field $\kk$, $J \subseteq E$ be a monomial ideal, and $I$ be the corresponding square-free monomial ideal of the dual polynomial ring $S$.  Denote by $\Sigma$ the set of non-vanishing multidegrees in the minimal free resolution of $S/I$.  Then
\[
V_E(E/J) = \bigcup_{\vec{a} \in \Sigma} V(x_i \mid a_i = 0),
\]
\[
\depth_E E/J = n - \max\{ j \mid \beta^S_{i,j}(S/I) \neq 0 \}.
\]
\end{thm}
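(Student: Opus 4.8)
\section*{Proof proposal}

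The two displayed formulas are linked: the second follows from the first together with part~(c) of Theorem~\ref{singular:varieties}, so I would prove the statement about $V_E(E/J)$ first and deduce the depth formula at the end. For that deduction, recall that since $I$ is squarefree every nonzero multigraded Betti number $\beta^S_{i,\vec{a}}(S/I)$ has $\vec{a}\in\{0,1\}^n$, so its total degree is $j=\abs{\vec{a}}=\#\supp\vec{a}$. Each piece $V(x_i\mid a_i=0)$ of the union is the coordinate subspace $\Span(e_i\mid a_i=1)\subseteq E_1$, of dimension $\abs{\vec{a}}$; hence
\[
\dim_\kk V_E(E/J)=\max_{\vec{a}\in\Sigma}\abs{\vec{a}}=\max\{\,j\mid \beta^S_{i,j}(S/I)\neq 0\,\},
\]
and applying $\depth_E E/J=\codim_{E_1}V_E(E/J)=n-\dim_\kk V_E(E/J)$ from Theorem~\ref{singular:varieties}(c) gives the depth formula.

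For the variety formula itself, the plan is to reduce to testing ``generic'' linear forms and then to invoke the correspondence between the two minimal free resolutions. By Theorem~\ref{singular:varieties}(b), $V_E(E/J)$ is a union of coordinate subspaces, so it is determined by which subspaces $\Span(e_i\mid i\in\sigma)$ it contains. Because containments among coordinate subspaces are governed by supports, the subspace indexed by $\sigma$ lies in $V_E(E/J)$ if and only if the single form $\ell_\sigma=\sum_{i\in\sigma}e_i$ (whose support is exactly $\sigma$) is $E/J$-singular: if $\ell_\sigma$ lies in the union then it lies in some component $\Span(e_i\mid i\in\tau)$, forcing $\sigma\subseteq\tau$ and hence $\Span(e_i\mid i\in\sigma)\subseteq V_E(E/J)$, while the converse is immediate. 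Thus it suffices to prove
\[
\ell_\sigma \text{ is } E/J\text{-singular}\iff \sigma\subseteq\supp\vec{a}\ \text{ for some }\ \vec{a}\in\Sigma,
\]
i.e., writing the singularity of $\ell_\sigma$ as non-vanishing of the homology of multiplication by $\ell_\sigma$ on $E/J$, one must identify the multidegrees in which this homology is supported with those occurring in $\Sigma$.

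The engine for this identification is the Avrunin--Scott-type theorem together with the Bernstein--Gelfand--Gelfand correspondence, which is the content of the cited results \cite[3.1, 3.2]{resolutions:of:monomial:ideals:over:exterior:algebras}. First, the rank (singular) variety coincides with the support variety: $V_E(E/J)=\operatorname{Supp}_S\mathcal{E}$, where $\mathcal{E}=\bigoplus_i\operatorname{Ext}^i_E(E/J,\kk)$ is the total cohomology module, a finitely generated $\ZZ^n$-graded module over the cohomology ring $\operatorname{Ext}^*_E(\kk,\kk)\iso S$. Second, and this is the crux, the BGG/Cartan-complex machinery identifies this $\ZZ^n$-graded $\operatorname{Ext}$-module with data read off from the minimal $S$-free resolution of $S/I$: its nonzero multigraded components occur precisely in the multidegrees $\vec{a}\in\Sigma$, so that $\operatorname{Supp}_S\mathcal{E}=\bigcup_{\vec{a}\in\Sigma}V(x_i\mid a_i=0)$. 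Combining the two steps yields the variety formula.

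The main obstacle is exactly this second step: making the multigraded dictionary between the exterior and symmetric sides precise, namely that the cohomology $\operatorname{Ext}^*_E(E/J,\kk)$ (equivalently, the minimal $E$-free resolution of $E/J$) is controlled multidegree-by-multidegree by the minimal $S$-free resolution of $S/I$. This is also where the hypothesis that $\kk$ is algebraically closed enters, guaranteeing the identification of the rank variety with the support variety. An alternative, more self-contained route would induct on $n$ using the base-change Proposition~\ref{depth:and:base:change} to split off one variable at a time, combined with short exact sequences relating $E/J$ to smaller monomial quotients; I expect this to reproduce the formula but with considerably more multigraded bookkeeping, so I would prefer the cohomological argument above.
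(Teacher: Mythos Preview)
The paper does not prove this theorem: it is stated with a citation to Aramova, Avramov, and Herzog \cite[3.1, 3.2, 4.2]{resolutions:of:monomial:ideals:over:exterior:algebras} and no argument is given, so there is no proof in the paper to compare your proposal against.

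That said, your outline is a faithful sketch of the strategy behind the cited results. The deduction of the depth formula from the variety formula via Theorem~\ref{singular:varieties}(c) is correct and clean, and your reduction to testing the single forms $\ell_\sigma=\sum_{i\in\sigma}e_i$ is a valid way to exploit Theorem~\ref{singular:varieties}(b). You also correctly identify the two substantive ingredients: the identification of the singular (rank) variety with the support of the total $\operatorname{Ext}$-module over $S$, and the multigraded comparison between the $E$-resolution of $E/J$ and the $S$-resolution of $S/I$ furnished by the Cartan/BGG machinery. As you yourself flag, the second ingredient is where essentially all the work lies, and your proposal does not supply it---you defer to the cited paper at exactly the point where the nontrivial content resides. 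So as a \emph{proof} this is incomplete in the same way that the paper's treatment is: both ultimately rely on \cite{resolutions:of:monomial:ideals:over:exterior:algebras} for the hard step. Your alternative inductive route via Proposition~\ref{depth:and:base:change} is plausible but would require a short exact sequence decomposition of $E/J$ compatible with the variable-splitting, and making that precise is again nontrivial bookkeeping you have not carried out.
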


\begin{example}\label{ex:freeResolutionOfC3}
Consider the edge ideal $I_E(C_3) = (e_1e_2, e_1e_3, e_2e_3) \subseteq E = \bigwedge_{\kk}\langle e_1, e_2, e_3 \rangle$, and let $I_S(C_3) = (x_1x_2, x_1x_3, x_2x_3) \subseteq S = \kk[x_1, x_2, x_3]$ denote the corresponding edge ideal in the dual polynomial ring.  It can be checked that the minimal free resolution of $R = S/I_S(C_3)$ is
\begin{center}
\begin{tikzcd}[ampersand replacement = \&, column sep = 2.5 em]
0 \rar \& S(-1,-1,-1)^2 \rar{d_2} \& {\Large \substack{S(-1,-1,0) \\ \oplus \\ S(-1,0,-1) \\ \oplus \\ S(0,-1,-1)}} \rar{d_1} \& S\rar \& R \rar \& 0, 
\end{tikzcd}
\end{center}
where
\[
d_2 = \begin{pmatrix} x_3 & 0 \\ -x_2 & x_2 \\ 0 & -x_1 \end{pmatrix} \qquad
d_1 = \begin{pmatrix} x_1x_2 & x_1x_3 & x_2x_3 \end{pmatrix}
\]
so that the only nonzero multidegrees are $\Sigma = \{(1,1,0), (1,0,1), (0,1,1), (1,1,1)\}$ and the only nonzero graded Betti numbers of $R$ are $\beta_{0,0}^S(R) = 1$, $\beta_{1,2}^S(R) = 3$ and $\beta_{2,3}^S(R) = 2$.  Hence, the above theorem implies that 
\[ V_E(C_3) = V(0) = E_1 \]  
\[\depth_E C_3 = 3 - 3 = 0.\]  
We will see that this example is greatly generalized in Theorem \ref{depth:cycles} below.
\end{example}

\begin{cor}\label{depth:compare} Let $G$ be a graph on $n$ vertices, and let $E$ and $S$ be the corresponding exterior algebra and polynomial ring over $\kk$ as above.  Then
\[\depth_E G \leq \depth_S G - 1\]
with equality if $I_S(G)$ has a linear free resolution over $S$.
\end{cor}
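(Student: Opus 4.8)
The plan is to reduce the whole corollary to the two formulas available to us: the computation of exterior depth in Theorem~\ref{depth:via:maximal:shifts} and the Auslander--Buchsbaum formula over the polynomial ring $S$. Write $R = S/I_S(G)$ and set $t = \max\{ j \mid \beta_{i,j}^S(R) \neq 0 \text{ for some } i \}$, the largest degree shift appearing in the minimal free resolution of $R$. Theorem~\ref{depth:via:maximal:shifts} gives $\depth_E G = n - t$, and the Auslander--Buchsbaum formula over $S$ gives $\depth_S G = n - \pd_S R$ since $\depth_S S = n$. Comparing the two, the inequality $\depth_E G \le \depth_S G - 1$ is equivalent to the single numerical claim
\[ t \ge \pd_S R + 1. \]
Thus the entire statement becomes a remark about where the maximal shift of the resolution of $R$ sits relative to its homological length $p := \pd_S R$.

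To prove the claim I would use that $I_S(G)$ is generated in degree $2$ (assuming $G$ has at least one edge; the edgeless case is immediate). First I would record that in a minimal graded free resolution the minimal shifts $m_i = \min\{ j \mid \beta_{i,j}^S(R) \neq 0\}$ are strictly increasing: if $g$ is a minimal generator of $F_i$ in degree $m_i$, then since $\ker d_i = \im d_{i+1} \subseteq \mathfrak{m}F_i$ by minimality while $g \notin \mathfrak{m}F_i$, we have $d_i(g) \neq 0$; as the entries of $d_i$ lie in $\mathfrak{m} = (x_1,\dots,x_n)$, the nonzero homogeneous element $d_i(g)$ of degree $m_i$ must involve a generator of $F_{i-1}$ of degree at most $m_i - 1$, so $m_{i-1} < m_i$. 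Because the generators of $I_S(G)$ are the quadrics $x_ix_j$, we have $m_1 = 2$, and induction then gives $m_i \ge i + 1$ for every $i \ge 1$. Evaluating at $i = p$, the nonzero top of the resolution supplies some $\beta_{p,j}^S(R) \neq 0$ with $j \ge m_p \ge p + 1$, so $t \ge p + 1$, which is exactly the claim. For the equality statement, I would note that if $I_S(G)$ has a linear resolution then $\beta_{i,j}^S(I_S(G)) \neq 0$ only for $j = i + 2$, hence $\beta_{i,j}^S(R) \neq 0$ with $i \ge 1$ forces $j = i + 1$; every shift of $R$ then lies on the line $j = i+1$ and the largest occurs at $i = p$, so $t = p + 1$ exactly and $\depth_E G = n - (p+1) = \depth_S G - 1$.

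I expect the main obstacle to be the numerical inequality $t \ge \pd_S R + 1$, or more precisely the structural fact that the maximal degree shift is forced to strictly exceed the homological length of the resolution. The strict monotonicity of the minimal shifts is the technical heart of the argument; once it is in hand, both the general inequality and the linear-resolution equality case fall out immediately by inspecting the final homological degree, and the remaining manipulations are purely formal substitutions into the two displayed depth formulas.
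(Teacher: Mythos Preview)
Your proof is correct and takes essentially the same approach as the paper: both combine Theorem~\ref{depth:via:maximal:shifts} with the Auslander--Buchsbaum formula and then use that $I_S(G)$ is generated in degree~$2$ to force a shift of size at least $p+1$ at the top of the resolution; your strict-monotonicity argument for the minimal shifts is simply a more explicit justification of what the paper states in one line. One small correction: the edgeless case is not ``immediate'' in the sense you suggest---if $G$ has $n\ge 1$ vertices and no edges then $\depth_E G = \depth_S G = n$ and the stated inequality fails---so the corollary tacitly assumes $G$ has at least one edge, an assumption the paper's own proof also makes without comment.
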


\begin{proof} As $I_S(G)$ is generated by quadratic monomials, $\beta_{i,j}^S(S/I_S(G)) = 0$ for $i > 0$ and $j \le i$.  Thus $\beta_{p,j}^S(S/I_S(G)) \neq 0$ for some $j \ge p+1$, where $p = \pd_S S/I_S(G)$.  Thus 
\[\depth_E G = n - \max\{ j \mid \beta^S_{i,j}(S/I_S(G)) \neq 0 \} \leq n - p - 1 = \depth_S G - 1,\]
where the last equality is the Auslander-Buchsbaum Formula \cite[15.3]{graded:syzygies}.  When $I_S(G)$ has a linear free resolution, equality follows from Theorem~\ref{depth:via:maximal:shifts}.
\end{proof}

\begin{rmk}
We note that the above inequality is both tight and the difference $\depth_S G - \depth_E G$ can be arbitrarily large.  For example, since $I_S(K_n)$ is Cohen-Macaulay of height $n - 1$, it will follow from Corollary \ref{complete:multipartite:graphs} that 
\[
\depth_S K_n - 1 = n -1 - \hgt I_S(K_n) = 0 = \depth_E K_n.
\]
On the other hand, if $C_n$ denotes the cycle graph on $n$ vertices, then $\depth_E C_n = 0$ by Theorem \ref{depth:cycles}, yet 
\[\depth_S C_n = \begin{cases} \floor{\frac{n}{3}} & \text{if $n \equiv 1 \pmod{3}$} \\[1ex]
\floor{\frac{n+2}{3}} & \text{if $n \not\equiv 1 \pmod{3}$} \end{cases}\]
by \cite[5.0.6]{Bouchat}. Thus, the difference $\depth_S G - \depth_E G$ can be arbitrarily large.
\end{rmk}

\section{Some Singular Variety Computations}\label{section:singularvariety}

In this section, we compute the singular varieties and depths of several large classes of graphs, including paths, cycles, spider graphs, star graphs, complete multipartite graphs, and trees.  We also investigate the effect whiskering a graph has on its depth.

\subsection{Trees and Cycles}

\begin{thm} \label{adding:a:leaf}
Let $G$ be a graph with a vertex $w = v_n$ of degree one, and let $v$ be the unique vertex adjacent to $w$.  Then
\[
V_E(G) = (V_E(G \setminus w) \cap V(x_n)) \cup V_E(G \setminus N[v]) 
\]
so that
\[
\depth_E G = \min\{\depth_E(G \setminus w) + 1, \depth_E(G \setminus N[v]) \}. 
\]
\end{thm}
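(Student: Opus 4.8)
The plan is to realize both summands of the claimed singular variety as the outer terms of a single short exact sequence, read off the containment $\subseteq$ from Theorem~\ref{singular:varieties}(d), and then upgrade this to an equality by a componentwise argument that exploits the position of the two pieces relative to the coordinate $x_n$; the depth formula will then follow from Theorem~\ref{singular:varieties}(c).

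First I would write $E = E'\langle e_n\rangle$ with $E'$ the exterior algebra on $e_1,\dots,e_{n-1}$, and set $J = I_{E'}(G\setminus w)E$. Since $w = v_n$ has $v$ as its only neighbor, the unique generator of $I = I_E(G)$ involving $e_n$ is $e_ve_n$, so $I = J + (e_ve_n)$ and there is a short exact sequence
\[
0 \longto I/J \longto E/J \longto E/I \longto 0.
\]
Because $e_n$ occurs in no generator of $J$, one has $E/J \iso (E'/I_{E'}(G\setminus w))\tensor_\kk \kk\langle e_n\rangle$, so Proposition~\ref{depth:and:base:change}(b) gives $V_E(E/J) = V_E(G\setminus w)\cap V(x_n)$ and $\depth_E(E/J) = \depth_E(G\setminus w)+1$. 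For the submodule, multiplication by $e_ve_n$ yields $I/J \iso E/(J:e_ve_n)$, and a short computation (writing a general element as $a+be_n$ with $a,b\in E'$ and using $e_n^2=0$) shows $(J:e_ve_n) = (I_{E'}(G\setminus w):_{E'}e_v)E + (e_n)$. Here $(I_{E'}(G\setminus w):_{E'}e_v) = I_{E'}(G\setminus w) + (e_k \mid v_k\in N[v]\setminus\{w\})$, the generator $e_v$ itself being present since $e_v^2=0$; killing these variables together with $e_n$ kills exactly $e_i$ for $i\in N[v]$, so $I/J \iso E^\flat/I_{E^\flat}(G\setminus N[v])$ for $E^\flat$ the exterior algebra on $[n]\setminus N[v]$. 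Proposition~\ref{depth:and:base:change}(a) then gives $V_E(I/J) = V_E(G\setminus N[v])$ and $\depth_E(I/J) = \depth_E(G\setminus N[v])$.

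Applying part~(d) of Theorem~\ref{singular:varieties} to the sequence yields
\[
V_E(G) \subseteq V_E(I/J)\cup V_E(E/J) = V_E(G\setminus N[v])\cup\bigl(V_E(G\setminus w)\cap V(x_n)\bigr),
\]
which is the containment $\subseteq$ (equivalently, by Corollary~\ref{depth:sequence:inequality}, the bound $\depth_E G\geq\min\{\depth_E(G\setminus N[v]),\,\depth_E(G\setminus w)+1\}$). The reverse containment is the crux and is \emph{not} delivered by part~(d), which only bounds each variety by the union of the other two. To finish I would argue componentwise, using that all three varieties are finite unions of coordinate subspaces (Theorem~\ref{singular:varieties}(b)) and that $V_E(G)$ is closed. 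The decisive separation is that every irreducible component of $V_E(I/J)=V_E(G\setminus N[v])$ is free in the $e_n$-direction (so contains $e_n$, since $w\in N[v]$), whereas $V_E(E/J)=V_E(G\setminus w)\cap V(x_n)$ lies in the hyperplane $V(x_n)$. Hence no maximal coordinate subspace $C$ occurring on the right lies in both pieces: if $C$ meets $V_E(I/J)$ then $C\not\subseteq V(x_n)\supseteq V_E(E/J)$, and if $C$ comes from $V_E(E/J)$ then $C\subseteq V(x_n)$ cannot sit inside a component of $V_E(I/J)$ without being proper in it, contradicting maximality. For such a $C$, applying the remaining containments of part~(d) to a generic point of $C$ (where it avoids the complementary piece) places that point, and hence all of $C$ by closedness, inside $V_E(G)$. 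This proves the right-hand side is contained in $V_E(G)$, giving equality.

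The depth statement then follows from $\depth_E G = \codim_{E_1}V_E(G)$ (Theorem~\ref{singular:varieties}(c)), since the codimension of a union is the minimum of the codimensions and $x_n$ is regular modulo $I_{G\setminus w}S$, so $\codim(V_E(G\setminus w)\cap V(x_n)) = \depth_E(G\setminus w)+1$. The main obstacle is precisely this reverse inclusion: it corresponds to the case $\depth_E(G\setminus N[v]) = \depth_E(G\setminus w)+1$, where Corollary~\ref{depth:sequence:inequality} alone gives only an inequality, and it is the $e_n$-transversality of the two summands that lets the componentwise argument pin the depth down exactly.
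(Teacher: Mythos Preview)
Your argument is correct, and it is genuinely different from the paper's.  The paper passes to the polynomial ring $S$ and builds the minimal free resolution of $S/I_S(G)$ as a mapping cone over the multiplication map $x_vx_n$, tracks the resulting set of multidegrees $\Sigma(G)$, and then invokes Theorem~\ref{depth:via:maximal:shifts} to read off $V_E(G)$.  You instead stay entirely on the exterior side: the short exact sequence $0\to I/J\to E/J\to E/I\to 0$ already has outer terms with the desired singular varieties, and the separation ``components of $V_E(G\setminus N[v])$ contain $e_n$, while $V_E(E/J)\subseteq V(x_n)$'' lets you upgrade the Theorem~\ref{singular:varieties}(d) containments to an equality.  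Your route avoids the free-resolution machinery and the translation theorem entirely; the paper's route gives the multidegrees of the resolution of $S/I_S(G)$ as a byproduct and never has to worry about the ``reverse inclusion'' at all, since Theorem~\ref{depth:via:maximal:shifts} outputs the singular variety directly.

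One small simplification you might make: once your separation argument shows $V_E(I/J)\subseteq V_E(G)$ (because each component of $V_E(I/J)$ meets the complement of $V_E(E/J)$), the remaining inclusion $V_E(E/J)\subseteq V_E(G)\cup V_E(I/J)=V_E(G)$ is immediate, and you do not need the maximality discussion for components of $V_E(E/J)$.
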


\begin{proof}
Let $\Sigma(G)$ denote the set of multidegrees of basis elements for any free module in the minimal free resolution of $S/I_S(G)$ over $S$.  We may assume the vertices of $G$ have been numbered so that $v_1, \dots, v_r$ are the neighbors of $v$ different from $w$, $v = v_{r+1}$, and $w = v_n$.  We also set $G' = G \setminus w$, $S' = \kk[x_1, \dots, x_{n-1}]$, and $S'' = \kk[x_{r+2},\dots, x_{n-1}]$.  We then have a short exact sequence of graded $S$-modules
\[
0 \longto S/(I_S(G') : x_{r+1}x_n)(-2) \stackrel{x_{r+1}x_n}{\longto} S/I_{S'}(G')S \longto S/I_S(G) \longto 0, 
\]
where $(I_S(G') : x_{r+1}x_n) = I_{S''}(G \setminus N[v])S + (x_1,\dots, x_r)$.  Let $F_\bullet$ and $F'_\bullet$ denote the minimal free resolutions of $S/(I_S(G'): x_{r+1}x_n)$ and $S/I_{S'}(G')S$ respectively.  We note that $F'_\bullet$ is obtained from the minimal free resolution of $S'/I_{S'}(G')$ over $S'$ by tensoring with $S$ since $S$ is free over $S'$ so that the set of multidegrees of basis elements for any free module in $F'_\bullet$ is precisely $\Sigma(G')$ (after identifying $\ZZ^{n-1}$ with $\ZZ^{n-1} \times \{0\} \subseteq \ZZ^n$).  Additionally, since 
\[
S/(I_S(G') : x_{r+1}x_n) \iso S''/I_{S''}(G \setminus N[v]) \tensor_\kk \kk[x_1,\dots, x_{r+1}, x_n]/(x_1,\dots,x_r),
\]
we know that $F_\bullet$ can be obtained as the tensor product over $\kk$ of the minimal free resolution of $S''/I_{S''}(G \setminus N[w])$ over $S''$ with the Koszul complex on the variables $x_1, \dots, x_r$.  Hence, if $\Delta = \{0,1\}^r$, we see that the set of multidegrees of basis elements for any free module in $F_\bullet$ is 
\[
\Sigma(G \setminus N[v]) + \Delta = \{\vec{a} + \vec{b} \mid \vec{a} \in \Sigma(G \setminus N[v]), \vec{b} \in \Delta\},
\]  
where we again identify $\ZZ^r$ with $\ZZ^r \times \{0\}^{n-r} \subseteq \ZZ^n$.

If $\phi: F_\bullet(-2) \to F'_\bullet$ is any chain map lifting $S/(I_S(G') : x_{r+1}x_n)(-2) \stackrel{x_{r+1}x_n}{\longto} S/I_{S'}(G')S$, then as observed in \cite[2.1.1]{Bouchat}, the mapping cone of $\phi$ provides a minimal free resolution of $S/I_S(G)$ since every multidegree $\vec{a} = (a_1, \dots, a_n)$ appearing as a shift in $F'_\bullet$ must have $a_n = 0$.  Since the $i$-th free module in the mapping cone of $\phi$ is just $F'_i \oplus F_{i-1}$ for $i > 0$ and $S$ if $i = 0$, it follows that
\[ \Sigma(G) = \Sigma(G') \cup (\Sigma(G \setminus N[v]) + \Delta + \vec{e}_{r+1} + \vec{e}_n). \]
Hence, Theorem \ref{depth:via:maximal:shifts} and Proposition \ref{depth:and:base:change} imply that
\begin{align*}
    V_E(G) &= \left(\bigcup_{\vec{a} \in \Sigma(G')} V(x_i, x_n \mid a_i = 0)\right) \cup \left(\bigcup_{\vec{b} \in \Sigma(G \setminus N[v])} V(x_i \mid b_i = 0) \right) \\[2 ex]
    &= (V_E(G \setminus w) \cap V(x_n)) \cup V_E(G \setminus N[v]),
\end{align*}
so that
\begin{align*}
    \depth_E G &= \codim_{E_1} V_E(G) \\
    &= \min\{ \codim_{E_1} (V_E(G \setminus w) \cap V(x_n)), \codim_{E_1} V_E(G \setminus N[v])\} \\
    &= \min\{\depth_E(G \setminus w) + 1, \depth_E(G \setminus N[v])\}. \qedhere
\end{align*}
\end{proof}

As a consequence of the preceding theorem, we recover the following result of McCullough and Mere on the singular varieties of path graphs.

\begin{cor}[{\cite[4.3]{LG-quadratic:quotients:of:exterior:algebras}}] \label{depth:path:graphs}
Let $P_n$ denote the path graph on $n \geq 1$ vertices with edges $v_iv_{i+1}$ for all $i < n$.
\begin{enumerate}[label = \textnormal{(\alph*)}]
    \item If $n \equiv 1 \pmod{3}$, then $\depth_E P_n = 1$ and 
    \[ V_E(P_n) = \bigcup_{i\, \equiv\, 1\!\!\!\!\!\pmod{3}} \!\!\!V(x_i). \]
    \item If $n \not\equiv 1 \pmod{3}$, then $\depth_E P_n = 0$ and $V_E(P_n) = E_1$.  
\end{enumerate}
\end{cor}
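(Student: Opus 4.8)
The plan is to induct on $n$, using Theorem~\ref{adding:a:leaf} with the leaf $w = v_n$ and its unique neighbor $v = v_{n-1}$. Deleting the leaf gives $P_n \setminus v_n = P_{n-1}$, while for $n \ge 3$ the closed neighborhood is $N[v_{n-1}] = \{v_{n-2}, v_{n-1}, v_n\}$, so that $P_n \setminus N[v_{n-1}]$ is the induced path $P_{n-3}$ on $v_1, \dots, v_{n-3}$. Theorem~\ref{adding:a:leaf} then yields the recursions
\[
V_E(P_n) = \bigl(V_E(P_{n-1}) \cap V(x_n)\bigr) \cup V_E(P_{n-3}), \qquad \depth_E P_n = \min\{\depth_E P_{n-1} + 1, \depth_E P_{n-3}\}
\]
for all $n \ge 3$, where (via Proposition~\ref{depth:and:base:change}) each variety is viewed inside the ambient $E_1 = \kk^n$ with the deleted variables left free. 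Since $\depth_E P_n = \codim_{E_1} V_E(P_n)$ by Theorem~\ref{singular:varieties}(c), it suffices to compute the singular varieties; the depth statement then drops out, because a nonempty finite union of coordinate hyperplanes has codimension $1$ while $E_1$ has codimension $0$.

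For the base cases I would record $V_E(P_0) = E_1$ (the empty graph, viewed with all variables free), $V_E(P_1) = V(x_1)$ (a single regular vertex, matching $1 \equiv 1 \pmod 3$), and $V_E(P_2) = E_1$ (since $P_2 = K_2$ has depth $0$, matching $2 \equiv 2 \pmod 3$); each of these is immediate. I would then proceed by strong induction for $n \ge 3$, splitting into the three residue classes of $n$ modulo $3$ and feeding the inductive descriptions of $V_E(P_{n-1})$ and $V_E(P_{n-3})$ into the variety recursion above.

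The heart of the argument is the case $n \equiv 1 \pmod 3$. Here $n - 1 \equiv 0$ and $n - 3 \equiv 1$, so by induction $V_E(P_{n-1}) = E_1$, giving $V_E(P_{n-1}) \cap V(x_n) = V(x_n)$, and $V_E(P_{n-3}) = \bigcup_{1 \le i \le n-3,\, i \equiv 1} V(x_i)$. The key observation is that $n$ itself satisfies $n \equiv 1 \pmod 3$, so the term $V(x_n)$ contributed by the deleted leaf is exactly the missing hyperplane that completes the union to $\bigcup_{1 \le i \le n,\, i \equiv 1} V(x_i)$, as claimed. In the remaining cases the union collapses to all of $E_1$: when $n \equiv 0 \pmod 3$ we have $n - 3 \equiv 0$, so $V_E(P_{n-3}) = E_1$ already; when $n \equiv 2 \pmod 3$ we have $n - 1 \equiv 1$, so the first term contributes only codimension-$2$ subspaces, but $n - 3 \equiv 2$ gives $V_E(P_{n-3}) = E_1$. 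Either way $V_E(P_n) = E_1$ and $\depth_E P_n = 0$.

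I expect the main obstacle to be bookkeeping rather than conceptual: one must keep careful track of which coordinate subspaces of the fixed ambient $E_1 = \kk^n$ are meant by $V_E(P_{n-1})$ and $V_E(P_{n-3})$ under the free-variable extensions of Proposition~\ref{depth:and:base:change}, noting that the hyperplane indices appearing in $V_E(P_{n-3})$ are inherited unchanged from the subpath $v_1, \dots, v_{n-3}$. Getting these index sets exactly right is precisely what makes the $n \equiv 1$ union close up cleanly, and it is the only point where the residue class of $n$ enters in an essential way.
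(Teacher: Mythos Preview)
Your proposal is correct and follows essentially the same approach as the paper: both argue by induction via Theorem~\ref{adding:a:leaf} applied at the leaf $v_n$, obtain the recursion $V_E(P_n) = (V_E(P_{n-1}) \cap V(x_n)) \cup V_E(P_{n-3})$, and then split on the residue of $n$ modulo $3$. The only cosmetic difference is that the paper treats $n \le 3$ as base cases and bundles the two $n \not\equiv 1$ residues together (since in both $V_E(P_{n-3}) = E_1$), whereas you start from $n = 0,1,2$ and handle the two residues separately.
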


\begin{proof}
For the $n \leq 3$, the result is a straightforward computation, which we omit.  If $n \geq 4$ and the result holds for $k < n$, then applying the preceding theorem with $w = v_n$ yields $V_E(P_n) = (V_E(P_{n-1}) \cap V(x_n)) \cup V_E(P_{n-3})$.  If $n \not\equiv 1 \pmod{3}$, then $V_E(P_{n-3}) = E_1$ by Proposition \ref{depth:and:base:change} and induction so that $V_E(P_n) = E_1$ and $\depth_E P_n = 0$.  If $n \equiv 1 \pmod{3}$, then 
\[
V_E(P_{n-3}) = \bigcup_{\substack{i \equiv 1 \!\!\!\pmod{3} \\ i < n}} V(x_i) \qquad \text{ and }  \qquad V(P_{n-1}) = E_1
\] 
so that \[ V_E(P_n) = V(x_n) \cup \left(\bigcup_{\substack{i \equiv 1 \!\!\!\pmod{3} \\ i < n}} V(x_i)\right) = \bigcup_{i \equiv 1 \!\!\!\pmod{3}} V(x_i), \]
whence $\depth_E P_n = 1$.
\end{proof}

Since every tree has a degree one vertex, we also note that the above theorem can be used to recursively compute the depth or singular variety of any tree. 

\begin{example}
Consider the tree $G$ shown below.
\begin{center}
    \graph[scale = 0.85]{ 
    4.5/-1.5/v_1/0,
    0/-1.5/v_2/180,
    -1.5/1.5/v_3/180,
    0/3/v_4/90,
    0/1.5/v_5/0,
    1.5/0/v_6/-90,
    3/0/v_7/90,
    4.5/1.5/v_8/0}
    {
    v_3/v_5,
    v_4/v_5,
    v_5/v_6,
    v_2/v_6,
    v_6/v_7,
    v_7/v_1,
    v_7/v_8}
\end{center}
The table in Figure \ref{depth:of:tree} shows two sequences of subgraphs obtained by deleting one whisker and its closed neighborhood at a time until we obtain a pair of paths for which the depth is known by Corollary \ref{depth:path:graphs}.  We then apply Theorem \ref{adding:a:leaf} to recursively compute the depth of the original tree.
\begin{figure}[ht]
\begin{center}
\vspace{2 ex}
\begin{tabular}{|c|c|c|c|c|}
\hline
$i$ & $G_i = G_{i-1} \setminus v_i$ & $\depth_E G_i$ & $G'_i = G_{i-1} \setminus N[v_i]$ & $\depth_E G'_i$ \\
\hline
0 & \begin{minipage}{0.3\textwidth}
\vspace{1 ex}
\graph[scale = 0.85]{ 
    3/-1/v_1/0,
    0/-1/v_2/180,
    -1/1/v_3/180,
    0/2/v_4/90,
    0/1/v_5/0,
    1/0/v_6/-90,
    2/0/v_7/90,
    3/1/v_8/0}
    {
    v_3/v_5,
    v_4/v_5,
    v_5/v_6,
    v_2/v_6,
    v_6/v_7,
    v_7/v_1,
    v_7/v_8}
\vspace{1 ex}
\end{minipage}
& 1 & -- & -- \\
\hline
1 & \begin{minipage}{0.3\textwidth}
\begin{center}
\vspace{1 ex}
\graph[scale = 0.85]{ 
    0/-1/v_2/180,
    -1/1/v_3/180,
    0/2/v_4/90,
    0/1/v_5/0,
    1/0/v_6/-90,
    2/0/v_7/90,
    3/1/v_8/0}
    {
    v_3/v_5,
    v_4/v_5,
    v_5/v_6,
    v_2/v_6,
    v_6/v_7,
    v_7/v_8}
    \vspace{1 ex}
\end{center}
\end{minipage} & 1 
& \begin{minipage}{0.3\textwidth}
\begin{center}
\graph[scale = 0.85]{ 
    -1/-1/v_3/180,
    -1/1/v_4/180,
    -1/0/v_5/0,
    1/0/v_2/0}
    {
    v_3/v_5,
    v_4/v_5}
\end{center}
\end{minipage} & 1 \\
\hline
2 & \begin{minipage}{0.3\textwidth}
\begin{center}
\vspace{1 ex}
\graph[scale = 0.85]{ 
    -1/1/v_3/180,
    0/2/v_4/90,
    0/1/v_5/0,
    1/0/v_6/-90,
    2/0/v_7/90,
    3/1/v_8/0}
    {
    v_3/v_5,
    v_4/v_5,
    v_5/v_6,
    v_6/v_7,
    v_7/v_8}
\vspace{1 ex}
\end{center}
\end{minipage} & 0 
& \begin{minipage}{0.3\textwidth}
\begin{center}
\graph[scale = 0.85]{ 
    -2/0/v_3/90,
    0/0/v_4/90,
    2/0/v_8/90}
    {}
\end{center}
\end{minipage} & 3 \\
\hline
3 & \begin{minipage}{0.3\textwidth}
\begin{center}
\vspace{1 ex}
\graph[scale = 0.85]{ 
    -1/0/v_4/90,
    0/0/v_5/-90,
    1/0/v_6/90,
    2/0/v_7/-90,
    3/0/v_8/90}
    {
    v_4/v_5,
    v_5/v_6,
    v_6/v_7,
    v_7/v_8}
\vspace{1 ex}
\end{center}
\end{minipage} & 0 
& \begin{minipage}{0.3\textwidth}
\begin{center}
\vspace{1 ex}
\graph[scale = 0.85]{ 
    -1/0/v_7/180,
    1/0/v_8/0}
    {v_7/v_8}
\vspace{1 ex}
\end{center}
\end{minipage} & 0 \\
\hline
\end{tabular}
\caption{Computing Depth by Trimming Whiskers}
\label{depth:of:tree}
\end{center}
\end{figure}
In this case, we have strategically chosen the vertices to delete so that the graphs $G'_i$ have immediately recognizable depth, but in general, one may obtain a repeatedly branching tree of subgraphs.
\end{example}

The following modification of the depth equality in the theorem will be useful later.

\begin{cor} \label{deleting:vertex:with:leaf}
Let $G$ be a graph with a vertex $w$ of degree one, and let $v$ be the unique vertex adjacent to $w$.  Then
\[
\depth_E G = \min\{\depth_E(G \setminus v), \depth_E(G \setminus N[v]) \}. 
\]
\end{cor}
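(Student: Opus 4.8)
The plan is to start from the depth formula supplied by Theorem~\ref{adding:a:leaf}, namely $\depth_E G = \min\{\depth_E(G \setminus w) + 1,\ \depth_E(G \setminus N[v])\}$, and to show that it agrees with $\min\{\depth_E(G \setminus v),\ \depth_E(G \setminus N[v])\}$. The second entries of the two minima already coincide, so everything reduces to comparing the first entries. The crucial observation is that, because $w$ has degree one with unique neighbor $v$, deleting $v$ leaves $w$ isolated: $G \setminus v = (G \setminus \{v,w\}) \sqcup \{w\}$. Hence $\depth_E(G \setminus v) = \depth_E(G \setminus \{v,w\}) + 1$ by Proposition~\ref{depth:and:base:change}(b), since adjoining an isolated vertex raises the depth by one. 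Writing $a = \depth_E(G \setminus w)$, $b = \depth_E(G \setminus N[v])$, and $d = \depth_E(G \setminus \{v,w\})$, the corollary therefore amounts to the purely numerical identity $\min\{a+1,\ b\} = \min\{d+1,\ b\}$.

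To control $a$ in terms of $b$ and $d$, I would apply the vertex-deletion short exact sequence from the proof of Proposition~\ref{deleting:a:vertex} to the graph $G \setminus w$ at the vertex $v$. A direct check identifies the two outer terms: deleting $N_{G \setminus w}[v]$ removes $\{v\} \cup (N_G(v) \setminus \{w\})$ from $G \setminus w$, which is exactly $N_G[v]$ removed from $G$, giving $G \setminus N[v]$ (depth $b$); while deleting $v$ from $G \setminus w$ gives $G \setminus \{v,w\}$ (depth $d$). Applying the depth lemma, Corollary~\ref{depth:sequence:inequality}, to this sequence yields $a \ge \min\{b,d\}$, with equality whenever $b \ne d$. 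This equality clause is the real engine of the argument.

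With this in hand the remaining step is a short case analysis. If $b = d$, then $a \ge b$ forces $\min\{a+1,b\} = b = \min\{d+1,b\}$. If $b < d$, then $a = b$, so both minima equal $b$. If $b > d$, then $a = d$, and both minima equal $\min\{d+1,b\} = d+1$. In every case $\min\{a+1,b\} = \min\{d+1,b\}$, which is the desired equality. The only genuinely delicate point is the last case $b > d$: there the mere inequality $a \ge \min\{b,d\} = d$ does not suffice, and one really needs the exact value $a = d$ coming from the equality clause of Corollary~\ref{depth:sequence:inequality}, since otherwise $\min\{a+1,b\}$ could overshoot $d+1$.
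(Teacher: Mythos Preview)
Your proof is correct and follows essentially the same approach as the paper: both use Theorem~\ref{adding:a:leaf}, the isolated-vertex identity $\depth_E(G\setminus v)=\depth_E(G\setminus\{v,w\})+1$, and the short exact sequence from Proposition~\ref{deleting:a:vertex} applied to $G\setminus w$ at $v$ together with the equality clause of Corollary~\ref{depth:sequence:inequality}. The only cosmetic difference is that the paper opens by applying Proposition~\ref{deleting:a:vertex} to $G$ itself at $v$ (dispatching the unequal-depth case immediately), whereas you start from Theorem~\ref{adding:a:leaf} and run a three-way case split on $b$ versus $d$; the underlying argument is the same.
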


\begin{proof}
By Proposition \ref{deleting:a:vertex}, we know that
\[
\depth_E G \geq \min\{\depth_E(G \setminus v), \depth_E(G \setminus N[v])\}.
\]
In the case where $\depth_E(G \setminus v)$ and $\depth_E(G \setminus N[v])$ are different, we get the desired result via Corollary \ref{depth:sequence:inequality}.

So, suppose $\depth_E(G \setminus v) = \depth_E(G \setminus N[v])$.  Applying Proposition \ref{deleting:a:vertex} again to the graph $G \setminus w$, we have
\[
\depth_E(G \setminus w) \geq \min\{\depth_E(G \setminus \{v, w\}), \depth_E(G \setminus N[v])\}.
\]
Since $G \setminus v = (G \setminus \{v,w\}) \sqcup \{w\}$, we have $\depth_E(G \setminus v) = \depth_E(G \setminus \{v,w\}) + 1$. Since the depths of $G \setminus v$ and $G \setminus N[v]$ were assumed to be equal and $G \setminus \{v,w\}$ has depth one less than that of $G \setminus v$, we get that $\depth_E(G \setminus w)=\depth_E(G \setminus v)-1$ by Corollary \ref{depth:sequence:inequality}. Finally, we use Theorem \ref{adding:a:leaf} to calculate the depth of $G$:
\begin{align*}
\depth_E G &= \min\left(\depth_E(G \setminus w) + 1, \depth_E(G \setminus N[v]) \right) \\
             &= \min\left(\depth_E(G \setminus v), \depth_E(G \setminus N[v]) \right). \qedhere
\end{align*}
\end{proof}

A \term{spider graph} is a tree that has exactly one vertex of degree greater than 2. This vertex will be called the \term{head}. Since no other vertex can have degree bigger than 2, the spider will have induced paths coming off from the head. We will call these paths the \term{legs}. 

Denote by $SP_{n_1, \dots , n_k}$ a spider with legs of lengths $1 \leq n_1 \leq n_2 \leq \dots \leq n_k$, and label the vertices of the leg of length $n_r$ as $v_0 = v_{r,0}, v_{r,1}, \dots, v_{r, n_r}$ where $v_0$ is the head of the spider.

\begin{thm}
Let $SP_{n_1, \dots , n_k}$ be a spider graph, and let $E = \bigwedge_{\kk}{\langle e_0, e_{r,i} \mid 1 \leq r \leq k, 1 \leq i \leq n_r \rangle}$ be the corresponding exterior algebra. Then
 \[ V_E(SP_{n_1, \dots , n_k}) = \left (\bigcap_{n_r \equiv 1\!\!\!\!\!\pmod{3}} \bigcup_{i \equiv 1\!\!\!\!\!\pmod{3}} \!\!\!\!\!V(x_{r,i})\right ) \cup \left ( \bigcap_{n_r \equiv 2\!\!\!\!\!\pmod{3}} \bigcup_{i \equiv 2\!\!\!\!\!\pmod{3}} \!\!\!\!\!V(x_{r,i}) \right ), \]
and $\depth_E SP_{n_1, \dots , n_k} = \min\{p, q\}$, where $p$ is the number of legs whose length is congruent to 1 mod 3 and $q$ is the number of legs whose length is congruent 2 mod 3.
\end{thm}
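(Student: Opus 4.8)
The plan is to establish the singular variety formula first and then read off the depth from it, since by Theorem~\ref{singular:varieties}(c) one has $\depth_E SP_{n_1,\dots,n_k} = \codim_{E_1} V_E(SP_{n_1,\dots,n_k})$. Write the claimed variety as $A \cup B$, where $A = \bigcap_{n_r \equiv 1} \bigcup_{i \equiv 1} V(x_{r,i})$ and $B = \bigcap_{n_r \equiv 2} \bigcup_{i \equiv 2} V(x_{r,i})$. For each leg $r$ with $n_r \equiv 1 \pmod 3$, the set $\bigcup_{i \equiv 1} V(x_{r,i})$ is the single hypersurface cut out by $\prod_{i \equiv 1} x_{r,i}$, and these hypersurfaces involve pairwise disjoint sets of variables as $r$ varies (never the head variable $x_0$); hence $A$ is a transverse intersection with $\codim A = p$, and likewise $\codim B = q$. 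Therefore $\codim(A \cup B) = \min\{\codim A, \codim B\} = \min\{p,q\}$, and the whole problem reduces to proving $V_E(SP_{n_1,\dots,n_k}) = A \cup B$.

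I would prove the variety formula by induction on the number of vertices, peeling a leaf $w = v_{k,n_k}$ off the end of a longest leg and feeding it into Theorem~\ref{adding:a:leaf}, which gives $V_E(G) = (V_E(G \setminus w) \cap V(x_{k,n_k})) \cup V_E(G \setminus N[v])$ with $v = v_{k,n_k-1}$. When $n_k \geq 4$, both $G \setminus w$ (leg $k$ now of length $n_k - 1$) and $G \setminus N[v]$ (leg $k$ now of length $n_k - 3$) are again spiders with $k$ legs and fewer vertices, so the inductive hypothesis applies to both. The core of the step is a purely set-theoretic simplification organized by $n_k \bmod 3$: writing the other legs' contributions as fixed sets $P_1, P_2$ and leg $k$'s contributions as $Q_1, Q_2$, one checks in each residue class that the extra hyperplane $V(x_{k,n_k})$ together with the shortened leg-$k$ unions recombine to reconstruct exactly $A \cup B$. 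The only arithmetic needed is that three consecutive positions $n_k-2, n_k-1, n_k$ contain exactly one index $\equiv 1$ and one index $\equiv 2 \pmod 3$, so that shortening leg $k$ by three deletes precisely one hyperplane from each of $Q_1$ and $Q_2$ while preserving $n_k \bmod 3$; the absorptions $(P_i \cap V(x_{k,n_k})) \cup P_i = P_i$ then collapse everything.

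The base cases are the spiders whose longest leg has length at most $3$, and here the leaf-peeling forces genuine degenerations: peeling a length-one or length-two leaf removes the head and splits the graph into a disjoint union of paths, while peeling a length-three leaf when $k = 3$ leaves a single path. I would evaluate these via Proposition~\ref{disjoint:union} together with Corollary~\ref{depth:path:graphs} (recording that $V_E(P_m) = \bigcup_{i \equiv 1} V(x_i)$ when $m \equiv 1 \pmod 3$ and $V_E(P_m) = E_1$ otherwise), and treat the star $SP_{1,\dots,1} = K_{1,k}$ and the empty-graph convention directly.

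The hard part will be exactly this degenerate book-keeping. Because the spider formula $A \cup B$ does \emph{not} specialize to the path formula once only two legs survive (a path is not a spider, and the two descriptions genuinely differ), one cannot simply invoke the inductive hypothesis after the head's degree drops to two; each such reduction must instead be matched by hand against the correct path or disjoint-union singular variety, with the indices along the resulting path realigned to the leg coordinates $x_{r,i}$. Tracking precisely how the head vertex and the tips of legs whose length is divisible by three enter these computations is where the real care is required, and it is also the place where the most degenerate configurations must be reconciled with the general statement.
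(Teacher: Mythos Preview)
Your inductive leaf-peeling approach is correct, but the paper takes a much shorter one-shot route: rather than peeling a leaf from the tip of a leg, it deletes the \emph{head} $v_0$.  This is not a degree-one vertex, so Theorem~\ref{adding:a:leaf} does not apply; instead the paper uses the short exact sequence \eqref{deleting:a:vertex:sequence} together with Theorem~\ref{singular:varieties}(d).  Deleting the head immediately yields $G\setminus v_0 = P_{n_1}\sqcup\cdots\sqcup P_{n_k}$ and $G\setminus N[v_0] = P_{n_1-1}\sqcup\cdots\sqcup P_{n_k-1}$, whose singular varieties are exactly $A$ and $B$ by Proposition~\ref{disjoint:union} and Corollary~\ref{depth:path:graphs}.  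One then has $V_E(G)\subseteq A\cup B$ directly, and for the reverse containment the paper observes that no irreducible component of $A$ is contained in $B$ (they are cut out by variables from disjoint sets of legs), so the containment $A\subseteq V_E(G)\cup B$ forces $A\subseteq V_E(G)$, and symmetrically $B\subseteq V_E(G)$.  This avoids your induction entirely, and in particular all the degenerate base-case bookkeeping you correctly identified as the hard part---the cases $n_k\le 3$, the collapse to paths when $k=3$, and the index realignment---simply never arise.  Your approach has the compensating advantage that Theorem~\ref{adding:a:leaf} gives an exact formula for $V_E(G)$ at each step, so no irreducibility argument is needed; but the price is a three-case inductive step plus several base cases to verify by hand.
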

 
 \begin{proof}
Set $G = SP_{n_1, \dots , n_k}$.  If $n_r = 1$ for all $r$, then $G = K_{1,k}$ is a complete bipartite graph, and the statement of the theorem asserts that $\depth_E G = 0$ and $V_E(G) = E_1$ since the rightmost intersection is an intersection over an empty collection.  This case follows from Corollary \ref{complete:multipartite:graphs}, so we may assume that $n_r \geq 2$ for some $r$.  

By deleting the head $v_0$ of the spider, we obtain an exact sequence of $E$-modules as \eqref{deleting:a:vertex:sequence} where $G \setminus v_0 = P_{n_1} \sqcup \dots \sqcup P_{n_k}$ and $G \setminus N[v_0] = P_{n_1 - 1} \sqcup \dots \sqcup P_{n_k - 1}$.  By combining Proposition \ref{disjoint:union}, Proposition \ref{depth:and:base:change}, and Corollary \ref{depth:path:graphs}, we compute the respective singular varieties
 \[ V_E(G \setminus v_0) = 
 \bigcap_{r=1}^k V_E(P_{n_r}) =
 \!\!\!\!\!\!\!\bigcap_{n_r \equiv 1\!\!\!\!\!\pmod{3}} \!\!\!\!\!\!V_E(P_{n_r}) =
 \!\!\!\bigcap_{n_r \equiv 1\!\!\!\!\!\pmod{3}} \left ( \bigcup_{i \equiv 1\!\!\!\!\!\pmod{3}} \!\!\!\!\!V(x_{r,i}) \right ), \]
 and
 \[ V_E(G \setminus N[v_0]) = 
 \bigcap_{r=1}^k V_E(P_{n_r -1}) =
 \!\!\!\!\!\!\!\bigcap_{n_r \equiv 2\!\!\!\!\!\pmod{3}} \!\!\!\!\!\!V_E(P_{n_r -1}) =
 \!\!\!\bigcap_{n_r \equiv 2\!\!\!\!\!\pmod{3}} \left ( \bigcup_{i \equiv 2\!\!\!\!\!\pmod{3}} \!\!\!\!\!V(x_{r,i}) \right) \]
 where we choose the variables $x_{r,i}$ with $i \equiv 2 \pmod{3}$ in the last equality since the $r$-th component of $G \setminus N[v_0]$ has vertices labeled by $v_{r,2}, v_{r,3}, \dots, v_{r,n_r}$.  
 
 In the former case, we can rewrite $V_E(G \setminus v_0)$ as a union of linear subspaces determined by the vanishing of $p$ variables $x_{r,i_r}$ where for each $r$ with $n_r \equiv 1 \pmod{3}$ we choose one $i_r \leq n_r$ with $i_r \equiv 1 \pmod{3}$.  Similarly, we can rewrite $V_E(G \setminus N[v_0])$ as a union of linear subspaces determined by the vanishing of $q$ variables $x_{r,i_r}$ where for each $r$ with $n_r \equiv 2 \pmod{3}$ we choose one $i_r \leq n_r$ with $i_r \equiv 2 \pmod{3}$. By Theorem \ref{singular:varieties}, we know that $V_E(G) \subseteq V_E(G \setminus v_0) \cup V_E(G \setminus N[v_0])$.  However, we also know that $V_E(G \setminus v_0) \subseteq V_E(G) \cup V_E(G \setminus N[v_0])$, and since it is clear that none of the linear subspaces whose union is $V_E(G \setminus v_0)$ is contained in any of the linear subspaces whose union is $V_E(G \setminus N[v_0])$, we must have $V_E(G \setminus v_0) \subseteq V_E(G)$.  An analogous argument shows that $V_E(G \setminus N[v_0]) \subseteq V_E(G)$ as well so that
 \[ V_E(G) = V_E(G \setminus v_0) \cup V_E(G \setminus N[v_0]).\]
 Finally, because every component of $V_E(G \setminus v_0)$ has codimension $p$ and every component of $V_E(G \setminus N[v_0])$ has codimension $q$, it follows that
 \[\depth_E G = \codim_{E} V_E(G) = \min\{p, q\}. \qedhere \]
 \end{proof}
 
 \begin{example}
The graph $SP_{1, 2,3,4}$ is shown below.
\begin{center}
    \graph[scale = 0.9]{
    0/0/v_0/90,
    -2/-1/v_{1,1}/90,
    -2/1/v_{2,1}/90,
    -4/1.25/v_{2,2}/90,
    2/1/v_{3,1}/90,
    4/1.5/v_{3,2}/90,
    6/1.5/v_{3,3}/90,
    2/-1/v_{4,1}/90,
    4/-1.5/v_{4,2}/90,
    6/-1.75/v_{4,3}/90,
    8/-1.75/v_{4,4}/90}{
    v_0/v_{1,1},
    v_0/v_{2,1},
    v_{2,1}/v_{2,2},
    v_0/v_{3,1},
    v_{3,1}/v_{3,2},
    v_{3,2}/v_{3,3},
    v_0/v_{4,1},
    v_{4,1}/v_{4,2},
    v_{4,2}/v_{4,3},
    v_{4,3}/v_{4,4}}
\end{center}
Since $SP_{1,2,3,4}$ has 2 legs of length congruent to 1 mod 3 and 1 leg of length congruent to 2 mod 3, the previous proposition shows that $\depth_E SP_{1, 2,3,4} = \min(2,1) = 1$ and
\begin{align*}
    V_E(SP_{1, 2,3,4}) &= V(x_{1,1})\cap (V(x_{4,1})\cup V(x_{4,4})) \cup                                      \left(V(x_{2,2})\right) \\
                       &= V(x_{1,1},x_{4,1}) \cup V(x_{1,1},x_{4,4}) \cup V(x_{2,2}).
\end{align*}
\end{example}

Having seen how to compute the depth and singular varieties of trees, we now turn our attention to cycles.

\begin{thm} \label{depth:cycles}
Let $C_n$ denote a cycle graph on $n \geq 3$ vertices.  Then $\depth_E C_n = 0$.
\end{thm}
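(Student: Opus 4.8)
The plan is to read off the depth from the formula in Theorem~\ref{depth:via:maximal:shifts}, namely $\depth_E C_n = n - \max\{j \mid \beta_{i,j}^S(S/I_S(C_n)) \neq 0\}$. Because $I_S(C_n)$ is squarefree, every multidegree $\vec{a}$ occurring in the minimal free resolution of $S/I_S(C_n)$ has entries in $\{0,1\}$, so its total degree is at most $n$; consequently $\max\{j \mid \beta_{i,j}^S \neq 0\} \le n$, with equality precisely when the unique squarefree multidegree of total degree $n$, namely $\vec{1} = (1, \dots, 1)$, appears. Thus the whole problem reduces to showing $\beta_{i,\vec{1}}^S(S/I_S(C_n)) \neq 0$ for some $i$, i.e.\ $\vec{1} \in \Sigma(C_n)$; this immediately yields $\depth_E C_n = n - n = 0$.

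To detect this top multigraded Betti number, I would apply Hochster's formula \cite{Herzog:Hibi:monomial:ideals}. Let $\Delta$ be the Stanley--Reisner complex of $I_S(C_n)$, which is exactly the independence complex of $C_n$ (its faces are the independent sets of $C_n$). Hochster's formula identifies $\beta_{i,\vec{1}}^S(S/I_S(C_n)) = \dim_\kk \tilde{H}_{n-i-1}(\Delta; \kk)$. Taking the signed sum over $i$ gives $\sum_i (-1)^i \beta_{i,\vec{1}}^S(S/I_S(C_n)) = \pm\, \tilde{\chi}(\Delta)$, so it is enough to prove that the reduced Euler characteristic $\tilde{\chi}(\Delta)$ is nonzero: this forces some reduced homology group, and hence some $\beta_{i,\vec{1}}^S$, to be nonzero.

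Finally I would compute $\tilde{\chi}(\Delta)$ combinatorially. Since a face of $\Delta$ of dimension $s-1$ is an independent set of size $s$, one has $\tilde{\chi}(\Delta) = -\sum_\sigma (-1)^{|\sigma|} = -I(C_n; -1)$, where $I(C_n; t) = \sum_\sigma t^{|\sigma|}$ is the independence polynomial (the sum over all independent sets, including $\emptyset$). Using the standard recurrences $I(C_n; t) = I(P_{n-1}; t) + t\, I(P_{n-3}; t)$ and $I(P_m; t) = I(P_{m-1}; t) + t\, I(P_{m-2}; t)$, the values $p_m := I(P_m; -1)$ obey $p_m = p_{m-1} - p_{m-2}$ with $p_0 = 1$, $p_1 = 0$, hence are $6$-periodic with values $1, 0, -1, -1, 0, 1$. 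A short calculation then gives $I(C_n; -1) = p_{n-1} - p_{n-3} = 2\cos(n\pi/3)$, which is $\pm 2$ when $3 \mid n$ and $\pm 1$ otherwise, and in particular is never zero for $n \ge 3$. Therefore $\tilde{\chi}(\Delta) \neq 0$, so $\vec{1} \in \Sigma(C_n)$ and $\depth_E C_n = 0$.

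I expect the main obstacle to lie in the homology step: the entire argument hinges on showing the independence complex of $C_n$ is not $\kk$-acyclic. The Euler characteristic dispatches this cleanly, but some care is needed to confirm that $I(C_n; -1)$ genuinely vanishes for no residue of $n$ modulo $6$; alternatively one could cite the known homotopy type $\Delta \simeq S^{k-1}$ (for $n = 3k \pm 1$) or $S^{k-1} \vee S^{k-1}$ (for $n = 3k$), each of which has nonvanishing reduced homology. The base case $n = 3$ is already recorded in Example~\ref{ex:freeResolutionOfC3}, where $\beta_{2,3}^S = 2$ exhibits the multidegree $\vec{1}$ explicitly.
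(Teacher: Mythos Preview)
Your proof is correct and takes a genuinely different route from the paper's.

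The paper argues by case analysis on $n \bmod 3$ using the graph-theoretic reduction tools developed earlier (Proposition~\ref{deleting:a:vertex}, Proposition~\ref{adding:an:edge}, Corollary~\ref{depth:sequence:inequality}): for $n \not\equiv 0 \pmod 3$ one deletes a vertex and reduces to two paths of known depth, while for $n \equiv 0 \pmod 3$ a more elaborate chain of edge-additions and vertex-deletions is needed, passing through auxiliary graphs $G$ and $H$ before again landing on unions of paths and a triangle. Your argument, by contrast, bypasses all of this structure: via Theorem~\ref{depth:via:maximal:shifts} the claim is equivalent to $\vec{1} \in \Sigma(C_n)$, and Hochster's formula converts this into the nonvanishing of some reduced homology of the independence complex, which you certify uniformly by the Euler-characteristic computation $I(C_n;-1) = 2\cos(n\pi/3) \neq 0$.

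What each approach buys: the paper's proof stays entirely within the internal toolkit (exact sequences, depths of paths) and illustrates how those lemmas are meant to be used, at the cost of an asymmetric and somewhat delicate treatment of the $3 \mid n$ case. Your approach is shorter, case-free, and in fact yields more: it gives the exact value $\bigl|\sum_i (-1)^i \beta_{i,\vec{1}}^S\bigr| = |I(C_n;-1)| \in \{1,2\}$, and the homotopy type you mention (Kozlov's $S^{k-1}$ or $S^{k-1} \vee S^{k-1}$) pins down the homology completely. The only external input is Hochster's formula, which the paper does not invoke elsewhere but is standard and is available in the reference \cite{Herzog:Hibi:monomial:ideals} already cited.
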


\begin{proof}
We assume that the vertices $v_1, \dots, v_n$ of $C_n$ have been labeled so that the edges are $v_iv_{i+1}$ for $i < n$ and $v_1v_n$, and we consider cases based on the residue of $n$ mod 3.  If $n \equiv 1  \pmod{3}$ or $n \equiv 2  \pmod{3}$, applying Proposition \ref{deleting:a:vertex} to the deletion of the vertex $v_n$ yields 
\[
\depth_E C_n \geq \min\{\depth_E P_{n-1}, \depth P_{n-3}\}.
\]
When $n \equiv 1 \pmod{3}$, Proposition \ref{depth:path:graphs} yields that $\depth_E P_{n-3} = 1$ and $\depth_E P_{n-1} = 0$. Consequently, $\depth_E C_n = 0$ by Corollary \ref{depth:sequence:inequality}.  When $n \equiv 2 \pmod{3}$, $\depth_E P_{n-3} = 0$ and $\depth_E P_{n-1} = 1$, so the result follows similarly. 

Now, suppose $n \equiv 0 \pmod{3}$. We may further assume that $n \geq 6,$ since the case $n=3$ is handled by Proposition \ref{complete:multipartite:graphs}. Let $G$ denote the graph obtained from $C_n$ by adding the edge $v_1v_3$.  Applying Proposition \ref{adding:an:edge} to the deletion of the edge $v_1v_3$ yields
\[
\depth_E C_n \geq \min\{\depth_E P_{n-5}, \depth_E G\}.
\]
Since $\depth_E P_{n-5} = 1$ by Proposition \ref{depth:path:graphs}, we aim to show $\depth_E G = 0$ in order to prove that $\depth C_n = 0$ by Corollary \ref{depth:sequence:inequality}. Applying Proposition \ref{adding:an:edge} again to the deletion of the edge $v_1v_n$
yields
\[
\depth_E G \geq \min\{\depth_E H, \depth_E P_{n-5}\},
\]
where $H = G \setminus v_1v_n$ is the following graph.
\begin{center}
    \labeledgraph{ 
    -2/-1/v_1/v_1/270,
    -2/1/v_2/v_2/90,
    0/0/v_3/v_3/90,
    2/0/v_4/v_4/90,
    2/0/v_5/\cdots/0,
    4/0/v_{n-2}/v_{n-2}/90,
    6/0/v_{n-1}/v_{n-1}/90,
    8/0/v_n/v_n/90}
    {v_1/v_2,
    v_2/v_3,
    v_3/v_1,
    v_3/v_4,
    v_{n-2}/v_{n-1},
    v_{n-1}/v_n}
\end{center}
Hence, it further suffices to show that $\depth_E H = 0$. Deleting the vertex $v_4$ from $H$ yields 
\[ 
\depth_E H \geq \min\{ \depth_E(P_2 \sqcup P_{n-5}), \depth_E(C_3 \sqcup P_{n-4}) \}.
\]
By Proposition \ref{disjoint:union}, Proposition \ref{depth:path:graphs}, and Corollary \ref{complete:multipartite:graphs}, we see that $\depth_E(P_2 \sqcup P_{n-5}) = 0 + 1 = 1$ and $\depth_E(C_3 \sqcup P_{n-4}) = 0 + 0 = 0$. Hence, $\depth_E H = 0$ by Corollary \ref{depth:sequence:inequality} as wanted.
\end{proof}

\subsection{Duplicating and Coning}

A vertex of a graph which is connected to every other vertex is called \term{universal}, and graphs with a universal vertex are called \term{cones}. Proposition \ref{depth:cone:graphs} states that all cones have depth 0. Special cases of cones include complete graphs, wheel graphs, and windmill graphs. 

\begin{prop}\label{depth:cone:graphs}
If $G$ is a graph with at least two vertices having a universal vertex $v$, then $\depth_E G = 0$. 
\end{prop}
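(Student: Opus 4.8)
The plan is to prove the stronger statement that the entire linear space $E_1$ is singular, i.e. $V_E(G) = E_1$; the proposition then follows at once from $\depth_E G = \codim_{E_1} V_E(G) = 0$ (Theorem~\ref{singular:varieties}(c)). Write $M = E/I_E(G)$ and relabel the vertices so that $v = v_n$. The engine of the argument is a single observation: since $v$ is universal, $\{v_i,v_n\}$ is an edge for every $i < n$, so $e_ie_n \in I_E(G)$ for all $i \le n$ (the case $i = n$ being $e_n^2 = 0$). Consequently the class $\bar{e_n} \in M$ is annihilated by the whole space of linear forms, because $\ell\,\bar{e_n} = \sum_i a_i\,\overline{e_ie_n} = 0$ for every $\ell = \sum_i a_i e_i$. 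Moreover $\bar{e_n} \neq 0$, since $I_E(G)$ is generated in degree $2$ and hence $I_E(G) \cap E_1 = 0$, giving $M_0 = \kk$ and $M_1 = E_1$.

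First I would handle every $\ell \in E_1$ with $\ell \notin \kk e_n$. For such $\ell$ the element $\bar{e_n}$ certifies singularity: it lies in $(0 :_M \ell)$ by the display above, whereas it does not lie in $\ell M$. Indeed $M$ is generated in degree $0$ with $M_0 = \kk$, so the degree-one piece of $\ell M$ is $\ell M_0 = \kk\ell$; since $\bar{e_n}$ is the homogeneous degree-one vector $e_n \in M_1 = E_1$, it belongs to $\kk\ell$ precisely when $\ell \in \kk e_n$. Thus $(0:_M\ell) \neq \ell M$ and $\ell$ is singular, so $V_E(G) \supseteq E_1 \setminus \kk e_n$.

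It then remains to deal with the line $\kk e_n$, where $\bar{e_n}$ is no longer a usable witness (there $\bar{e_n} \in (e_n)M$). I would close this gap in either of two ways. Directly, universality also forces $e_n M_{\ge 1} = 0$: for $\emptyset \neq T \subseteq [n]$ the product $e_ne_T$ lies in $I_E(G)$ (it vanishes if $n \in T$, and otherwise $e_ne_i \in I_E(G)$ for any $i \in T$). Hence $(0:_M e_n) = M_{\ge 1}$, whereas $e_n M = \kk\bar{e_n}$ is one-dimensional; since $n \ge 2$ we have $\dim_\kk M_{\ge 1} \ge \dim_\kk M_1 = n > 1$, so $(0:_M e_n) \supsetneq e_n M$ and every nonzero multiple of $e_n$ is singular too. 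Alternatively, and more cleanly, one can invoke Theorem~\ref{singular:varieties}(a)--(b): $V_E(G)$ is a Zariski-closed cone, and a closed set containing the dense complement $E_1 \setminus \kk e_n$ (a line is a proper subspace when $n \ge 2$) must be all of $E_1$. Either route yields $V_E(G) = E_1$.

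The crux of the whole argument is the opening observation that a universal vertex manufactures a nonzero degree-one element $\bar{e_n}$ killed by every linear form; the only genuine subtlety is the forms proportional to $e_n$, which is exactly why I would either compute $(0:_M e_n)$ by hand or appeal to closedness of the singular variety. As a sanity check I would also note that the same conclusion drops out of Theorem~\ref{depth:via:maximal:shifts}: a universal vertex is an isolated point of the independence complex $\mathrm{Ind}(G)$, so $\tilde{H}_0(\mathrm{Ind}(G);\kk) \neq 0$ and Hochster's formula produces a nonvanishing Betti number $\beta^S_{n-1,\,n}(S/I_S(G))$ in top degree, whence $\depth_E G = n - n = 0$.
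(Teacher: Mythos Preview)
Your proof is correct and takes a genuinely different route from the paper's. The paper argues indirectly via its structural toolkit: it builds an auxiliary graph $H$ by deleting the edge $uv$ (for some $u\neq v$) and attaching a new whisker $vw$, uses the leaf formula (Theorem~\ref{adding:a:leaf}) to get $\depth_E H = 1$, and then applies Corollary~\ref{difference:is:regular} to the identification $H/(u\sim w)\cong G$ to conclude $\depth_E G \le \depth_E H - 1 = 0$. Your argument instead works directly from the definition of a regular element: the class $\bar e_n$ of the universal vertex is a nonzero degree-one element annihilated by every linear form, and comparing $(\ell M)_1=\kk\bar\ell$ with $\bar e_n$ immediately shows every $\ell\notin\kk e_n$ is singular; the residual line is handled either by the easy computation $(0:_M e_n)=M_{\ge 1}\supsetneq \kk\bar e_n=e_nM$ or by Zariski closure via Theorem~\ref{singular:varieties}(b). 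Your approach is more elementary and self-contained (it needs nothing beyond the definition of singularity and the fact that $I_E(G)$ lives in degrees $\ge 2$), whereas the paper's proof is designed to showcase the leaf and vertex-identification machinery developed earlier. Your Hochster sanity check is yet a third independent route and is also valid: the facet $\{v\}$ is an isolated point of $\mathrm{Ind}(G)$, forcing $\beta^S_{n-1,n}(S/I_S(G))\neq 0$ and hence $\depth_E G=0$ by Theorem~\ref{depth:via:maximal:shifts}.
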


\begin{proof}
Let $u$ be any vertex of $G$ different from $v$, and let $H$ denote the graph obtained by adding a whisker $vw$ to the graph $G \setminus uv$.  By Theorem \ref{adding:a:leaf}, we see that 
\[ 
\depth_E H = \min\{ \depth_E(G \setminus uv) + 1, \depth_E \{u\} \} = 1.
\]
We note that $G$ is isomorphic to the graph obtained from $H$ by identifying the vertices $u$ and $w$.  Since $N[w] \cap N[u] = \emptyset$ and every vertex in $N(w) = \{v\}$ is adjacent to every vertex in $N(v)$ by assumption, Corollary \ref{difference:is:regular} implies that $\depth_E H \geq	\depth_E G + 1$. Since $\depth_E H = 1$, it follows that $\depth_E G = 0$.
\end{proof}

A vertex $v'$ is called a \term{duplicate} of another vertex $v$ if $N(v')=N(v)$.

\begin{prop} \label{depth:duplicate:vertex}
If $G$ is a graph with a duplicate vertex $v$ and $\depth_E(G \setminus v) = 0$, then $\depth_E G = 0$.
\end{prop}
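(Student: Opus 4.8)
The plan is to run the deletion short exact sequence \eqref{deleting:a:vertex:sequence} at the duplicate vertex $v$ and to promote the inequality of Proposition~\ref{deleting:a:vertex}(a) to an \emph{equality} by invoking the refined clause of Corollary~\ref{depth:sequence:inequality}. The naive application only gives $\depth_E G \geq \min\{\depth_E(G \setminus v),\, \depth_E(G \setminus N[v])\} \geq 0$, which is vacuous. What saves the argument is that the duplicate hypothesis forces the two outer terms of the sequence to have \emph{different} depths, and equality then pins $\depth_E G$ to the smaller one, namely $0$.

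\textbf{Key step.} Write $v'$ for the vertex of which $v$ is a duplicate, so that $N(v') = N(v)$. Since $G$ is simple, $v' \notin N(v') = N(v)$, and as $v' \neq v$ we conclude $v' \notin N[v]$; in particular $v$ and $v'$ are nonadjacent. Now examine $G \setminus N[v]$: deleting the closed neighborhood of $v$ removes $v$ together with all of $N(v)$, but leaves $v'$ in place. Every neighbor of $v'$ lies in $N(v') = N(v) \subseteq N[v]$, so $v'$ has no surviving neighbors and is an isolated vertex of $G \setminus N[v]$. This observation is the heart of the proof and, although elementary, is the one step that is not automatic from the general machinery.

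\textbf{Conclusion.} Because $G \setminus N[v]$ splits as a disjoint union of the isolated vertex $\{v'\}$ with the remaining graph, and $\depth_E(\{v'\}) = 1$ (the quotient is $\kk\langle e_{v'}\rangle$, on which $e_{v'}$ is regular), Proposition~\ref{disjoint:union} gives $\depth_E(G \setminus N[v]) \geq 1 > 0 = \depth_E(G \setminus v)$, the latter equality being the hypothesis. The two outer modules in \eqref{deleting:a:vertex:sequence}, whose depths are $\depth_E(G \setminus N[v])$ and $\depth_E(G \setminus v)$, therefore have distinct depths, so the equality clause of Corollary~\ref{depth:sequence:inequality} yields
\[
\depth_E G = \min\{\depth_E(G \setminus N[v]),\, \depth_E(G \setminus v)\} = \min\{\geq 1,\ 0\} = 0.
\]

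\textbf{Main obstacle.} The only subtlety is recognizing that the deletion sequence by itself is inconclusive and that one must manufacture a strict gap between the two comparison depths before the equality clause can be applied. The duplicate structure supplies exactly this gap through the isolated vertex $v'$ in $G \setminus N[v]$; once that is noticed, the rest is a direct citation of Propositions~\ref{deleting:a:vertex} and~\ref{disjoint:union} together with Corollary~\ref{depth:sequence:inequality}.
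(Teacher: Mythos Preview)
Your proof is correct and follows essentially the same route as the paper: both delete the duplicate vertex $v$, observe that its twin $v'$ becomes isolated in $G \setminus N[v]$ (the paper phrases this as $G \setminus N[v] = (H \setminus N[w]) \sqcup \{w\}$), and then invoke the equality clause of Corollary~\ref{depth:sequence:inequality} since $\depth_E(G \setminus N[v]) \geq 1 > 0 = \depth_E(G \setminus v)$. The only cosmetic difference is that the paper records the exact value $\depth_E(G \setminus N[v]) = \depth_E(H \setminus N[w]) + 1$ rather than just the lower bound $\geq 1$, but only the strict inequality is needed.
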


\begin{proof}
If $H = G \setminus v$ and $w$ denotes a duplicate of $v$ in $G$, we note that $G \setminus N[v] = (H \setminus N[w]) \sqcup \{w\}$.  Applying Proposition \ref{deleting:a:vertex} to the deletion of $v$ as well as Proposition \ref{disjoint:union} yields
\[
\depth_E G \geq \min\{\depth_E(H \setminus N[w]) + 1, \depth_E H\}.
\]
Since $\depth_E H = 0$ by hypothesis, Corollary \ref{depth:sequence:inequality} implies that $\depth_E G = \depth_E H = 0$ as desired.
\end{proof}

A \term{complete graph} with $n$ vertices is a graph $K_n$ in which every vertex is universal. A \textbf{complete $r$-partite graph} $K_{n_1,\ldots,n_r}$ is a graph whose vertices can be partitioned into $r$ disjoint sets of cardinalities $n_1,\ldots,n_r$ so that no two vertices in the same set are adjacent, and there is an edge between every pair of vertices from different sets. Complete multipartite graphs include complete graphs since $K_{1,\ldots,1} = K_r$ and \term{star graphs}, which are trees of the form $K_{1, n}$.

 By duplicating a universal vertex, we can combine the previous two results with to obtain the following.

\begin{cor} \label{complete:multipartite:graphs}
For $K_{n_1, \dots, n_r} \neq K_1$, we have ${\depth_E K_{n_1, \dots, n_r} = 0}$.
\end{cor}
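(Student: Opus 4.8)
The plan is to induct on the number of vertices $n = n_1 + \cdots + n_r$, using Proposition~\ref{depth:cone:graphs} for the base case and Proposition~\ref{depth:duplicate:vertex} for the inductive step, exactly as the sentence preceding the corollary advertises (``by duplicating a universal vertex''). I may assume throughout that $r \geq 2$, so that $G = K_{n_1,\dots,n_r}$ has at least one edge; the hypothesis $G \neq K_1$ only serves to rule out the single-vertex graph, which is the sole complete multipartite graph the induction would otherwise stumble on. The structural fact that drives everything is this: in $K_{n_1,\dots,n_r}$ a vertex lying in a part of size $1$ is adjacent to all others and hence universal, whereas if some part has size $\geq 2$, then any two vertices of that part share the same neighborhood, namely the complement of that part, so each is a duplicate of the other. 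Thus $G$ always has either a universal vertex or a duplicate vertex.

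For the base case (equivalently, whenever some $n_i = 1$), the distinguished vertex $v$ in a singleton part is universal, and since $r \geq 2$ the graph has at least two vertices, so Proposition~\ref{depth:cone:graphs} gives $\depth_E G = 0$ at once. For the inductive step I may therefore assume every part has size $\geq 2$. Picking any vertex $v$, say in part $j$, the observation above shows $v$ is a duplicate vertex of $G$, while deleting it yields $G \setminus v = K_{n_1,\dots,n_j - 1,\dots,n_r}$, again a complete $r$-partite graph (with all $r$ parts still nonempty since $n_j - 1 \geq 1$) on one fewer vertex. By the inductive hypothesis $\depth_E(G \setminus v) = 0$, so Proposition~\ref{depth:duplicate:vertex} immediately yields $\depth_E G = 0$, completing the induction.

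I do not expect a genuine obstacle here, since all the real content is packaged in the two cited propositions and the argument is essentially bookkeeping. The one point that needs care is the interface between the two cases: I must confirm that the deletion in the inductive step always lands on a graph still covered by the statement (complete multipartite with $r \geq 2$ parts, hence $\neq K_1$), and that the descent terminates at a graph possessing a part of size $1$—where the cone argument applies—rather than drifting toward the excluded single vertex. Checking that reducing a part from $n_j$ to $n_j - 1 \geq 1$ preserves all $r$ parts, and that the base case is triggered precisely when the first singleton part appears, settles this cleanly.
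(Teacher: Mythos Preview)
Your proposal is correct and takes essentially the same approach as the paper: both use Proposition~\ref{depth:cone:graphs} when some part is a singleton and Proposition~\ref{depth:duplicate:vertex} to reduce the size of a part otherwise. The only cosmetic difference is that the paper inducts on the size of $n_1$ (always deleting from that part) whereas you induct on the total number of vertices, but the underlying mechanism is identical.
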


\begin{proof}
We prove the claim by induction on the size of $n_1$, and we note that the base case is handled by Proposition \ref{depth:cone:graphs} since $K_{n_1, \dots, n_r} \neq K_1$. So assume $n_1 \geq 2$ and that the corollary holds for all complete multipartite graphs $K_{m_1, \dots, m_s}$ with $m_1 < n_1$.  If $v$ is a vertex in the set of size $n_1$ in the partition of the vertices of $K_{n_1, \dots, n_r}$, then $v$ is duplicate vertex since $n_1 \geq 2$ and $K_{n_1, \dots, n_r} \setminus v = K_{n_1-1,n_r \dots, n_r}$ has depth 0 by induction, so the result follows from Proposition \ref{depth:duplicate:vertex}.
\end{proof}

\subsection{Whiskering}

Given a graph $G$,  define $W(G,m)$ to be the graph formed by attaching $m$ new degree 1 vertices to each vertex of $G$. Then $W(G,m)$ is called a \term{whiskering} of $G$ and the degree 1 vertices are called \term{whiskers}.

\begin{thm} \label{multiple:whiskering:depth}
Let $G$ be a graph on $n$ vertices, and let $m > 0$.  Then 
\[ 
\depth_E W(G, m) = m\depth_E W(G, 1). 
\]
\end{thm}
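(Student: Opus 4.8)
The plan is to induct on the number $n$ of vertices of $G$, using the leaf-deletion recursion of Corollary~\ref{deleting:vertex:with:leaf} applied to a single whisker. For the base cases $n \le 1$: when $G$ is empty, $W(G,m)$ is empty and both sides vanish; when $G$ has a single vertex, $W(G,m) = K_{1,m}$ is a star, which has depth $0$ for every $m \ge 1$ by Corollary~\ref{complete:multipartite:graphs}, so that both sides equal $0$.

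For the inductive step, I would fix a vertex $v$ of $G$ with $\deg_G v = d$ and let $w$ be one of the $m$ whiskers attached to $v$ in $W(G,m)$. Since $w$ has degree one with unique neighbor $v$, Corollary~\ref{deleting:vertex:with:leaf} gives
\[ \depth_E W(G,m) = \min\{\depth_E(W(G,m)\setminus v),\ \depth_E(W(G,m)\setminus N[v])\}. \]
The heart of the argument is to identify these two deletions as whiskered graphs together with some isolated vertices. Deleting $v$ strands the $m$ whiskers of $v$ as isolated vertices, so that $W(G,m)\setminus v$ is the disjoint union of $W(G\setminus v, m)$ with $m$ isolated vertices; deleting the closed neighborhood $N[v]$ removes $v$, its $m$ whiskers, and its $d$ neighbors in $G$, stranding the $dm$ whiskers of those neighbors, so that $W(G,m)\setminus N[v]$ is the disjoint union of $W(G\setminus N_G[v], m)$ with $dm$ isolated vertices. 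Since $j$ isolated vertices form a graph whose quotient is the full exterior algebra on $j$ variables and hence has depth $j$, Proposition~\ref{disjoint:union} then yields
\[ \depth_E(W(G,m)\setminus v) = \depth_E W(G\setminus v, m) + m, \qquad \depth_E(W(G,m)\setminus N[v]) = \depth_E W(G\setminus N_G[v], m) + dm. \]

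Because $G\setminus v$ and $G\setminus N_G[v]$ each have fewer than $n$ vertices, the inductive hypothesis rewrites the right-hand depths as $m\,\depth_E W(G\setminus v, 1)$ and $m\,\depth_E W(G\setminus N_G[v], 1)$. Substituting these and pulling the common factor of $m$ out of the minimum gives
\[ \depth_E W(G,m) = m\cdot\min\{\depth_E W(G\setminus v, 1) + 1,\ \depth_E W(G\setminus N_G[v], 1) + d\}. \]
Running the identical recursion with $m = 1$ shows that the bracketed minimum is precisely $\depth_E W(G,1)$, which completes the induction.

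I expect the main obstacle to be the bookkeeping in the two disjoint-union decompositions, since it is easy to miscount which whiskers survive as isolated vertices: deleting $v$ strands only the $m$ whiskers of $v$, whereas deleting $N[v]$ removes those same $m$ whiskers but instead strands the $dm$ whiskers of the deleted neighbors of $v$. Once these identifications are made precisely, the remainder is a routine application of the leaf recursion together with additivity of depth over disjoint unions, and the factor of $m$ falls out of the minimum automatically.
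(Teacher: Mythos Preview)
Your proposal is correct and follows essentially the same argument as the paper's proof: both induct on $n$, apply Corollary~\ref{deleting:vertex:with:leaf} at a whisker to reduce to $W(G\setminus v,m)$ plus $m$ isolated vertices and $W(G\setminus N_G[v],m)$ plus $dm$ isolated vertices, invoke the inductive hypothesis, and factor $m$ out of the resulting minimum. Your treatment of the $n=1$ base case via Corollary~\ref{complete:multipartite:graphs} is unnecessary (it already falls out of the inductive step with $d=0$), but it does no harm.
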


\begin{proof}
We will argue by induction on $n \geq 0$. When $n = 0$, both $W(G, m)$ and $W(G, 1)$ are the empty graph, which has depth zero.

Now, suppose that $n \geq 1$ and that the result holds for all graphs with fewer than $n$ vertices. Let $v$ a vertex of $G$ of degree $d$. Observe that by Corollary \ref{deleting:vertex:with:leaf} we have
\begin{align*}
\depth_E W(G,1) &= \min\{\depth_E(W(G,1) \setminus v), \depth_E(W(G,1) \setminus N[v]) \} \\[1ex]
             &= \min\{1 + \depth_E(W(G \setminus v,1)), d + \depth_E(W(G\setminus N[v],1)) \}.
\end{align*}
Using Corollary \ref{deleting:vertex:with:leaf} again and our inductive assumption, we then calculate
\begin{align*}
\depth_E W(G,m) &= \min\{\depth_E(W(G,m) \setminus v), \depth_E(W(G,m) \setminus N[v]) \} \\[1ex]
             &= \min\{m + \depth_E W(G \setminus v,m), dm + \depth_E W(G\setminus N[v],m) \} \\[1ex]
             &= \min\{m + m\depth_E W(G \setminus v,1), dm + m\depth_E W(G\setminus N[v],1) \} \\[1ex]
             &= m\min\{1 + \depth_E W(G \setminus v,1), d + \depth_E W(G\setminus N[v],1) \} \\[1ex]
             &= m\depth_E W(G,1). \qedhere
\end{align*}
\end{proof}

Given a graph $G$, a set of vertices of $G$ is an \term{independent set} if no two vertices in the set are adjacent. The \term{independence number} $\alpha(G)$ is the largest number of vertices in any independent subset of $G$.

\begin{thm}\label{depth:one:whisker}
Let $G$ be a graph with $n$ vertices. Then $\depth_E W(G,1) = n-\alpha(G)$.
\end{thm}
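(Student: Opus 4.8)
The plan is to induct on the number $n$ of vertices of $G$, using Corollary \ref{deleting:vertex:with:leaf} applied to the whisker edge at a chosen vertex. The base case $n = 0$ is the empty graph, where both sides equal $0$ (note $\alpha(\emptyset) = 0$). For the inductive step, fix a vertex $v$ of $G$ of degree $d$ and let $w$ be its whisker in $W(G,1)$, so $w$ has degree one with unique neighbor $v$. Corollary \ref{deleting:vertex:with:leaf} then gives
\[
\depth_E W(G,1) = \min\{\depth_E(W(G,1)\setminus v),\ \depth_E(W(G,1)\setminus N[v])\};
\]
this is precisely the recursion already derived inside the proof of Theorem \ref{multiple:whiskering:depth}, so I can either quote it or rebuild it from scratch.

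The first real step is to identify the two graphs on the right combinatorially, which is where all the bookkeeping lives. Deleting $v$ leaves $W(G\setminus v,1)$ together with the now-isolated whisker $w$, so $W(G,1)\setminus v \cong W(G\setminus v,1)\sqcup\{\text{pt}\}$. Deleting $N[v]=\{v,w\}\cup N(v)$ leaves $W(G\setminus N[v],1)$ together with the $d$ whiskers of the deleted $G$-neighbors of $v$, each now isolated, so $W(G,1)\setminus N[v]\cong W(G\setminus N[v],1)\sqcup(d\text{ pts})$. Since a single isolated vertex has depth $1$ (Proposition \ref{depth:and:base:change}), Proposition \ref{disjoint:union} converts these to $\depth_E(W(G,1)\setminus v) = 1 + \depth_E W(G\setminus v,1)$ and $\depth_E(W(G,1)\setminus N[v]) = d + \depth_E W(G\setminus N[v],1)$.

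Applying the inductive hypothesis to $G\setminus v$ (on $n-1$ vertices) and to $G\setminus N[v]$ (on $n-1-d$ vertices) and simplifying the constants, the recursion becomes
\[
\depth_E W(G,1) = \min\{\,n - \alpha(G\setminus v),\ (n-1) - \alpha(G\setminus N[v])\,\}.
\]
Thus everything reduces to the purely combinatorial identity
\[
\alpha(G) = \max\{\,\alpha(G\setminus v),\ 1 + \alpha(G\setminus N[v])\,\},
\]
which is the standard deletion recurrence for the independence number: a maximum independent set of $G$ either avoids $v$, hence is an independent set of $G\setminus v$, or contains $v$, in which case removing $v$ leaves an independent set of $G\setminus N[v]$; conversely both $\alpha(G\setminus v)$ and $1+\alpha(G\setminus N[v])$ are realized by independent sets of $G$, the latter by adjoining $v$ to a maximum independent set of $G\setminus N[v]$.

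I do not expect a genuine obstacle here: the only subtle point is correctly tracking the isolated whiskers produced by the two deletions, since those contribute the $+1$ and $+d$ that make the algebraic recursion formally identical to the $\alpha$-recursion. Once those decompositions are pinned down, the depth recurrence and the independence-number recurrence match term by term, and the induction closes.
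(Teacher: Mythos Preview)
Your proposal is correct and follows essentially the same approach as the paper: induction on $n$, applying Corollary \ref{deleting:vertex:with:leaf} at a whisker to reduce to $W(G\setminus v,1)$ and $W(G\setminus N[v],1)$ plus isolated points, then matching the resulting recursion with the independence-number recurrence. The only cosmetic difference is that the paper chooses $v$ inside a maximum independent set so that $1+\alpha(G\setminus N[v])=\alpha(G)$ is immediate, whereas you allow $v$ arbitrary and invoke the full deletion recurrence $\alpha(G)=\max\{\alpha(G\setminus v),\,1+\alpha(G\setminus N[v])\}$; both arrive at the same conclusion.
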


\begin{proof}
We will argue by induction on $n \geq 0$. When $n = 0$, $W(G, 1)$ is the empty graph so that $\depth_E W(G, 1) = 0 = 0 - 0 = n - \alpha(G)$.  

Now, suppose that $n \geq 1$ and that the result holds for all graphs with fewer than $n$ vertices. Let $A$ be a maximal independent set of $G$, and let $v$ be a vertex in $A$ with degree $d$ in $G$. By Corollary \ref{deleting:vertex:with:leaf}, we have
\begin{align*}
\depth_E W(G,1) &= \min\{\depth_E(W(G,1) \setminus v), \depth_E(W(G,1) \setminus N[v]) \} \\[1ex]
             &= \min\{1 + \depth_E W(G \setminus v,1), d + \depth_E W(G \setminus N[v],1) \} \\[1ex]
             &= \min\{1 + n-1-\alpha(G \setminus v), d + n-d-1-\alpha(G \setminus N[v]) \} \\[1ex]
             &= \min\{n-\alpha(G \setminus v), n-1-\alpha(G \setminus N[v]) \} \\[1ex]
             &= n - \max\{\alpha(G \setminus v), 1+\alpha(G \setminus N[v]) \}.
\end{align*}
If $A'$ is an independent subset of $G \setminus v$, then it is also independent in $G$ because $G \setminus v$ is an induced subgraph of $G$, so $\alpha(G \setminus v) \leq \alpha(G)$. If $A''$ is independent in $G \setminus N[v]$, then $A'' \cup \{v\}$ is independent in $G$, so $1 + \alpha(G \setminus N[v]) \leq \alpha(G)$. Further, $A \setminus \{v\}$ is independent in $G \setminus N[v]$ so $1 + \alpha(G \setminus N[v]) = \alpha(G)$. Hence
\[
\depth_E W(G,1) = n - \max\{\alpha(G \setminus v), 1+\alpha(G \setminus N[v]) \} = n - \alpha(G). \qedhere
\]
\end{proof}

A \term{vertex cover} of a graph $G$ is a set $C$ of vertices of $G$ such that every edge has one of its ends in $G$.  A vertex cover $C$ is \term{minimal} if no proper subset of $C$ is a vertex cover.  It is easy to see that $C = \{i_1,\ldots,i_r\}$ is a minimal vertex cover of $G$ if and only if the prime ideal $P_C = (x_{i_1},\ldots,x_{i_r}) \subseteq S$ is a minimal prime of $I_S(G)$ \cite[9.1.4]{Herzog:Hibi:monomial:ideals}.  The \term{vertex cover number} $\beta(G)$ is the smallest number of vertices in any vertex cover of $G$.  Hence, $\hgt I_S(G) = \beta(G)$.  As a consequence of \cite[3.1.21]{West:graph:theory}, we have the following corollary.

\begin{cor}
For any graph $G$, we have $\depth_E W(G,1) = \hgt I_S(G)$. \qed
\end{cor}

Since $W(G,1)$ is Cohen-Macaulay \cite[2.2]{Villarreal90}, we also recover a special case of a purely commutative result of Biermann and Van Tuyl \cite[4.7]{balanced:simplicial:complexes} which gives the regularity of whiskered graphs.

\begin{cor}
For any graph $G$, we have $\reg W(G, 1) = \dim S/I_S(G)$.
\end{cor}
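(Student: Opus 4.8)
The plan is to push the depth computation just obtained through Theorem~\ref{depth:via:maximal:shifts} to read off the maximal shift in the Betti table of $\tilde S/I_{\tilde S}(W(G,1))$, where $\tilde S$ is the polynomial ring in the $2n$ variables dual to the $2n$ vertices of $W(G,1)$, and then use Cohen--Macaulayness to trap the regularity between a crude lower bound and a sharp upper bound. Write $H = W(G,1)$ throughout; here $\reg W(G,1)$ means $\reg \tilde S/I_{\tilde S}(H)$, which agrees with the regularity over the exterior algebra by Aramova--Avramov--Herzog.

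First I would assemble two numerical inputs. By the preceding corollary together with Theorem~\ref{depth:one:whisker}, $\depth_E H = \hgt I_S(G) = \beta(G) = n - \alpha(G)$, so Theorem~\ref{depth:via:maximal:shifts} (applied to $H$ on its $2n$ vertices) gives
\[
\max\{\, j \mid \beta^{\tilde S}_{i,j}(\tilde S/I_{\tilde S}(H)) \neq 0 \,\} = 2n - \depth_E H = n + \alpha(G).
\]
For the second input, note that the $n$ whisker edges form a perfect matching of $H$, so every vertex cover of $H$ needs at least $n$ vertices while the $n$ original vertices of $G$ already cover $H$; hence $\hgt I_{\tilde S}(H) = \beta(H) = n$. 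Since $H$ is Cohen--Macaulay by \cite[2.2]{Villarreal90}, the Auslander--Buchsbaum formula yields $\pd_{\tilde S} \tilde S/I_{\tilde S}(H) = \hgt I_{\tilde S}(H) = n$ and $\dim \tilde S/I_{\tilde S}(H) = 2n - n = n$.

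With these in hand the regularity is squeezed from both sides. For the lower bound I would pick $(i_0, j_0)$ with $\beta^{\tilde S}_{i_0,j_0} \neq 0$ and $j_0 = n + \alpha(G)$; since $i_0 \leq \pd_{\tilde S} = n$, this forces $\reg \tilde S/I_{\tilde S}(H) \geq j_0 - i_0 \geq \alpha(G)$. For the upper bound I would invoke the standard fact that the regularity of a Cohen--Macaulay graded quotient $R = \tilde S/I$ of codimension $c$ is attained in the top homological degree, namely $\reg R = \max\{\, j \mid \beta^{\tilde S}_{c,j}(R) \neq 0 \,\} - c$ (equivalently $\reg R = a(R) + \dim R$, via graded local duality and the generators of the canonical module). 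Applying this with $c = n$ gives $\reg \tilde S/I_{\tilde S}(H) \leq (n + \alpha(G)) - n = \alpha(G)$. The two bounds meet, so $\reg \tilde S/I_{\tilde S}(H) = \alpha(G)$, and since $\dim S/I_S(G) = n - \hgt I_S(G) = n - \beta(G) = \alpha(G)$ by the Gallai identity \cite[3.1.21]{West:graph:theory}, the claim follows.

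The vertex-cover count and the dimension bookkeeping are routine; the single step carrying real content is the Cohen--Macaulay regularity identity $\reg R = \max\{j \mid \beta^{\tilde S}_{c,j}(R) \neq 0\} - c$, which is where the work hides. This is the main obstacle to get right, since it is precisely the mechanism forcing the globally maximal shift $n + \alpha(G)$ to be realized at homological degree $c = n$, so that the elementary lower bound and the Cohen--Macaulay upper bound coincide.
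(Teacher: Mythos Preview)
Your proof is correct and follows essentially the same route as the paper's: both invoke the preceding corollary to get $\depth_E W(G,1)=\hgt I_S(G)$, feed this through Theorem~\ref{depth:via:maximal:shifts} to obtain the maximal shift $n+\alpha(G)$, use Cohen--Macaulayness of $W(G,1)$ to get $\pd_{\tilde S} W(G,1)=n$, and then solve for the regularity. The only cosmetic difference is that the paper asserts in one line that Cohen--Macaulayness forces $\max\{j\mid\beta_{i,j}\neq 0\}=\pd+\reg$, whereas you sandwich the regularity between matching lower and upper bounds.
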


\begin{proof}
Let $n$ be the number of vertices of $G$, and let $S'$ denote the polynomial ring of $W(G, 1)$ in $2n$ variables.  Applying Theorem \ref{depth:via:maximal:shifts}, the previous corollary,  and the fact that $W(G, 1)$ is Cohen-Macaulay gives
\[ 
\hgt I_S(G) = \depth_E W(G, 1) = 2n - \pd_{S'} W(G, 1) - \reg W(G, 1). \]
The set of vertices of $G$ is a minimal vertex cover, and since $W(G, 1)$ is Cohen-Macaulay, all minimal vertex covers have the same size so that $\pd_{S'} W(G, 1) = \hgt I_{S'}(W(G, 1)) = n$.  And so, it follows that $\reg W(G, 1) = n - \hgt I_S(G) = \dim S/I_S(G)$.
\end{proof}

\begin{example}
As a consequence of Theorem \ref{depth:one:whisker}, we can compute the depth of every Cohen-Macaulay tree $G$, as every such tree is of the form $W(G_0, 1)$ for some tree $G_0$ \cite[2.4, 2.5]{Villarreal90}.  For example, the tree $G$ below is Cohen-Macaulay since it is the whiskering of the tree $G_0$ on the vertices $v_1, \dots v_{10}$, but it is not a spider.
\begin{center}
\labeledgraph[scale = 0.9]{
    -1/3/0/w_1/180,
    -1/2/1/v_1/180,
    1/3/2/w_2/0,
    1/2/3/v_2/0,
    0/2/4/w_3/90,
    0/1/5/v_3/180,
    0/0/6/v_4/45,
    0/-1/7/w_4/-90,
    -1/-1/8/v_5/90,
    -2/-2/9/w_5/-120,
    -1/-2/10/v_7/-90,
    -2/-3/11/w_7/-90,
    -2/-1/12/v_6/90,
    -3/-2/13/w_6/90,
    1/-1/14/v_8/90,
    2/-2/15/w_8/-30,
    1/-2/16/v_{10}/-90,
    2/-3/17/w_{10}/-90,
    2/-1/18/v_9/90,
    3/-2/19/w_9/90
    }{
    0/1,
    1/5,
    2/3,
    3/5,
    4/5,
    5/6,
    6/7,
    6/8,
    8/9,
    8/10,
    10/11,
    8/12,
    12/13,
    6/14,
    14/15,
    14/16,
    16/17,
    14/18,
    18/19}
\end{center}
We have $\depth_E G = \beta(G_0) = 3$ since $\{v_3, v_5, v_8\}$ is a vertex cover of $G_0$ of minimum size.
\end{example}

\begin{cor} \label{bunch:of:depths} \label{depth:whisker:path} \label{depth:whisker:cycle}
 Let $n \geq 1$. Then: 
 \begin{enumerate}[label = \textnormal{(\alph*)}]
     \item $\depth_E W(P_n,1) = \floor{\frac{n}{2}}$ \\[-1 ex]
     \item $\depth_E W(C_n,1) = \ceiling{\frac{n}{2}}$ for $n \geq 3$ \\[-1 ex]
     \item $\depth_E W(K_n,1) = n-1$
 \end{enumerate} 
\end{cor}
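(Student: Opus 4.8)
The plan is to recognize that all three parts are immediate consequences of Theorem~\ref{depth:one:whisker}, which says $\depth_E W(G,1) = n - \alpha(G)$ for any graph $G$ on $n$ vertices. Thus each claim reduces to computing the independence number of the underlying graph $P_n$, $C_n$, or $K_n$ and then simplifying a floor/ceiling expression. There is no induction or module-theoretic work left to do; the content is entirely in three standard graph-theoretic facts.

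First I would record the independence numbers. For the path $P_n$ with vertices $v_1, \dots, v_n$, selecting every other vertex, e.g. $\{v_1, v_3, v_5, \dots\}$, is a maximum independent set, so $\alpha(P_n) = \ceiling{\frac{n}{2}}$. For the cycle $C_n$, the cycle constraint prevents taking more than $\floor{\frac{n}{2}}$ pairwise non-adjacent vertices, and this is achieved, so $\alpha(C_n) = \floor{\frac{n}{2}}$. For the complete graph $K_n$, every two vertices are adjacent, so any independent set is a single vertex and $\alpha(K_n) = 1$.

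Substituting into Theorem~\ref{depth:one:whisker} and using the elementary identities $n - \ceiling{\frac{n}{2}} = \floor{\frac{n}{2}}$ and $n - \floor{\frac{n}{2}} = \ceiling{\frac{n}{2}}$ then yields
\[
\depth_E W(P_n,1) = n - \ceiling{\tfrac{n}{2}} = \floor{\tfrac{n}{2}}, \qquad
\depth_E W(C_n,1) = n - \floor{\tfrac{n}{2}} = \ceiling{\tfrac{n}{2}},
\]
and $\depth_E W(K_n,1) = n - 1$, proving (a), (b), and (c) respectively.

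Since the reduction to independence numbers is automatic, the only step requiring any care is the verification of the independence numbers themselves, and even there the main obstacle is merely the bookkeeping of parities in the path and cycle cases (ensuring the alternating selection is genuinely maximum rather than just maximal). These are classical, so I expect the entire argument to be short; the proof is essentially a one-line application of Theorem~\ref{depth:one:whisker} followed by the three independence-number computations.
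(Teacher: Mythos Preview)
Your proposal is correct and matches the paper's approach: both reduce immediately to Theorem~\ref{depth:one:whisker} and then compute $\alpha(P_n)$, $\alpha(C_n)$, and $\alpha(K_n)$. The only cosmetic difference is that the paper derives $\alpha(C_n)$ by deleting a vertex to reduce to $\alpha(P_{n-1})$, whereas you state it directly.
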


\begin{proof}
 (a) If $P_n$ has edges $v_iv_{i+1}$ for each $i < n$, then every independent set of $P_n$ is of the of the form $\{v_{i_1}, \dots, v_{i_t}\}$ where $1 \leq i_1 \leq \cdots \leq v_{i_t} \leq n$ and $i_{j+1} \geq i_j + 2$ for all $j < t$.  From this, it easily follows that $t \leq \floor{\frac{n+1}{2}} = \ceiling{\frac{n}{2}}$. In particular, we see that $\{v_1, v_3, \dots, v_{2t -1}\}$ is independent for $t = \ceiling{\frac{n}{2}}$ so that $\alpha(P_n)=\ceiling{\frac{n}{2}}$ and $\depth_E W(P_n,1) = n - \ceiling{\frac{n}{2}} = \floor{\frac{n}{2}}$ by Theorem \ref{depth:one:whisker}.
 
 (b) If $A$ is an independent set of $C_n$, there is at least one vertex $v$ not in $A$ so that $A$ is independent in $C_n \setminus v = P_{n-1}$.  As we already observed that $\alpha(C_n \setminus v) \leq \alpha(C_n)$ in the proof of theorem, it follows that $\alpha(C_n) = \alpha(P_{n-1}) = \floor{\frac{n}{2}}$ so that $\depth_E W(C_n,1) = n - \floor{\frac{n}{2}} = \ceiling{\frac{n}{2}}$.
 
 (c) This follows immediately from the fact that every maximal independent set of $K_n$ consists of only one vertex.
\end{proof}

\begin{example}
The graph $W(K_4,3)$ is shown below.
\begin{center}
\labeledgraph[scale = 0.8]{
    0/0/1//135,
    0/-3.5/2//90,
    -3.5/-3.5/3//90,
    -3.5/0/4//90,
    1.93/0.517/5//270,
    1.44/1.44/6//90,
    0.517/1.93/7//90,
    1.93/-4.017/8//90,
    1.44/-4.94/9//90,
    0.517/-5.43/10//90,
    -5.43/-4.017/11//90,
    -4.94/-4.94/12//90,
    -4.017/-5.43/13//90,
    -5.43/0.517/14//90,
    -4.94/1.44/15//90,
    -4.017/1.93/16//90}{
    1/2,
    1/3,
    1/4,
    2/3,
    2/4,
    3/4,
    1/5,
    1/6,
    1/7,
    2/8,
    2/9,
    2/10,
    3/11,
    3/12,
    3/13,
    4/14,
    4/15,
    4/16}
\end{center}
Combining the preceding corollary with Theorem \ref{multiple:whiskering:depth}, we see that  $\depth_E W(K_4,3) = (4-1)3 = 9$. The next section will show that this graph has maximal depth among all graphs with 16 vertices (and more generally, that $W(K_m, m - 1)$ has maximal depth among all graphs with $m^2$ vertices).
\end{example}

We now consider the behavior of the depth of a graph upon iteratively whiskering every vertex at least twice.  Given a graph $G$ and integers $m_1, \dots, m_k > 0$, we define the graph $W(G, m_1, \dots, m_k)$ recursively by
\[ W(G, m_1, \dots, m_k) := W(W(G, m_1, \dots, m_{k-1}), m_k) \]
When $m_i = m$ for all $i$, we also denote this graph by simply $W^k(G, m)$.  The following results show that the depth of these graphs is solely dependent on the number of vertices of the original graph and the number of whiskers added.

\begin{cor}\label{depth:repeated:whiskering} \label{depth:second:whiskering}
Let $G$ be a graph on $n$ vertices, and let $m_1, m_2, \dots, m_k > 0$ for some $k \geq 2$. Then 
\[\depth_E W(G, m_1, \dots, m_k) = n(m_1+1)(m_2+1)\cdots (m_{k-2}+1)m_k.\] 
In particular, for any $m > 0$ and $k \geq 2$, we have
\[
\depth_E W^k(G, m) = nm(m + 1)^{k-2}.
\]
\end{cor}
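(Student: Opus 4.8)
The plan is to peel off the last whiskering and reduce the computation to Theorems~\ref{multiple:whiskering:depth} and~\ref{depth:one:whisker}, the only genuinely new ingredient being a formula for the independence number of an already-whiskered graph. Writing $H' = W(G, m_1, \dots, m_{k-1})$ so that $W(G, m_1, \dots, m_k) = W(H', m_k)$, Theorem~\ref{multiple:whiskering:depth} gives $\depth_E W(H', m_k) = m_k \depth_E W(H', 1)$, and Theorem~\ref{depth:one:whisker} evaluates $\depth_E W(H', 1) = \abs{V(H')} - \alpha(H')$. Hence
\[
\depth_E W(G, m_1, \dots, m_k) = m_k\bigl(\abs{V(H')} - \alpha(H')\bigr),
\]
and I am left to compute $\abs{V(H')}$ and $\alpha(H')$. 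The vertex count is immediate: attaching $m_i$ whiskers to each vertex multiplies the number of vertices by $m_i + 1$, so $\abs{V(W(G, m_1, \dots, m_j))} = n\prod_{i=1}^{j}(m_i+1)$.

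The key step I would isolate as a lemma is that, for any graph $H$ and any integer $m \geq 1$,
\[
\alpha\bigl(W(H, m)\bigr) = m\abs{V(H)}.
\]
To see this, note first that the $m\abs{V(H)}$ whiskers of $W(H, m)$ are pairwise non-adjacent, since each is adjacent only to its base vertex, so they form an independent set and $\alpha(W(H,m)) \geq m\abs{V(H)}$. Conversely, given any independent set $A$, set $U = A \cap V(H)$; for each of the $\abs{V(H)} - \abs{U}$ vertices not in $U$ at most its $m$ whiskers can lie in $A$, while no whisker of a vertex in $U$ can, whence $\abs{A} \leq \abs{U} + m(\abs{V(H)} - \abs{U}) = m\abs{V(H)} - (m-1)\abs{U} \leq m\abs{V(H)}$. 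The point is that this value depends only on $\abs{V(H)}$ and not on the structure of $H$.

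Finally I would combine these. Because $k \geq 2$, the graph $H' = W(G, m_1, \dots, m_{k-1})$ is itself the whiskering $W(H, m_{k-1})$ of $H = W(G, m_1, \dots, m_{k-2})$, which has $N := n\prod_{i=1}^{k-2}(m_i + 1)$ vertices. The lemma then gives $\alpha(H') = m_{k-1}N$, while $\abs{V(H')} = (m_{k-1}+1)N$, so the two combine to $\abs{V(H')} - \alpha(H') = N$. Substituting into the displayed identity yields $\depth_E W(G, m_1, \dots, m_k) = m_k N = n(m_1+1)\cdots(m_{k-2}+1)m_k$, and specializing to $m_i = m$ for all $i$ gives $\depth_E W^k(G, m) = nm(m+1)^{k-2}$. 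It is worth flagging that the hypothesis $k \geq 2$ is used precisely to guarantee that $H'$ is itself a whiskered graph, so that the structure-independent value of $\alpha(H')$ applies; this is exactly why all dependence on $\alpha(G)$, which is present after a single whiskering, disappears after two. The only real obstacle is proving the independence-number lemma, and even that is elementary, so the remainder is bookkeeping with the vertex counts.
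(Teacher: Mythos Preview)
Your proof is correct and follows essentially the same approach as the paper: both reduce via Theorems~\ref{multiple:whiskering:depth} and~\ref{depth:one:whisker} to computing $\alpha$ of a whiskered graph, and both establish $\alpha(W(H,m)) = m\abs{V(H)}$ by the same partition-by-base-vertex argument. The only difference is organizational---the paper treats $k=2$ first and then inducts, whereas you handle general $k$ in one pass by peeling off the last whiskering; your formulation of the independence-number computation as a standalone lemma is arguably cleaner.
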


\begin{proof}
Suppose that $k = 2$.   By Theorem \ref{multiple:whiskering:depth}, we may assume that $m_2 = 1$.  Hence, it suffices to show that $\depth_E W(G, m_1, 1) = n$.  If $A$ is a maximal independent set of $W(G,m_1)$, we can partition $A$ into sets $A_v$ for each vertex $v$ of $G$, where $A_v$ is the set of vertices in $A$ equal to $v$ or a whisker of $v$ in $W(G, m_1)$. If $v \in A_v$, then $\abs{A_v} = 1$, and otherwise, the maximality of $A$ forces $\abs{A_v} = m_1$.  From this, it follows that the set of whiskers of $W(G, m_1)$ is the unique independent set of maximum size.  And so, Theorem \ref{depth:one:whisker} yields $\depth_E W(G,m_1,1) = n$.  This general case follows immediately by a simple induction on $k$ after applying the $k = 2$ case to the graph $G' = W(G, m_1, \dots, m_{k-2})$, which is easily seen to have $n(m_1 + 1) \cdots (m_{k-2} + 1)$ vertices.
\end{proof}

\begin{rmk}\label{whisker:remark} While the preceding corollary shows that for any two graphs $G, G'$ on $n$ vertices, $\depth_E W(G,m_1,m_2) = \depth_E W(G',m_1,m_2)$, it is not true that $\depth_E W(G,m_1) = \depth_E W(G',m_1)$.  For example, $\depth_E W(P_4,1) = 2$ while $\depth_E W(C_4,1) = 3$ by Proposition~\ref{depth:whisker:path}.  This is contrary to the case over a polynomial ring where $\depth_S W(G,1) = n$ for any graph $G$ on $n$ vertices since $W(G,1)$ is always Cohen-Macaulay \cite[2.2]{Villarreal90}.
\end{rmk}

\section{Bounds on Depth of Edge Ideals}\label{section:boundsOnDepth}

In this section, we prove some upper bounds on the depth of edge ideals.  The first main result is a tight upper bounds on the depth of arbitrary graphs.  Then we prove a more refined bound for bipartite graphs.

Let $G$ be a graph.  A subset of pairwise disjoint edges is a \term{matching}; if the edges in a matching form an induced subgraph of $G$, it is an \term{induced matching}.  The maximum size of an induced matching of $G$ is the \term{induced matching number}, denoted $\im(G)$.  
The maximal size of a minimal vertex cover is denoted $\tau_{\max}(G)$.  In particular, for any minimal vertex cover $C$ of a graph $G$, we have
\[\abs{C} \le \tau_{\max}(G) = \mathrm{bight} I_S(G) \le \pd_S S/I_S(G). \]

Next, we give a general upper bound on the depth of exterior and symmetric algebra edge ideals, the first of which recovers \cite[4.2]{HH21}.

\begin{thm}\label{general:bound} 
Let $G$ be a graph on $n$ vertices, none of which are isolated.  Then
\begin{center}
\begin{minipage}{0.4\textwidth}
\begin{align*}
    \depth_S G &\le n + 2 - 2 \sqrt{n},\\
    \depth_E G &\le n + 1 - 2 \sqrt{n},
\end{align*}
\end{minipage}
\begin{minipage}{0.4\textwidth}
\begin{align*}
    \pd_S G &\ge 2 \sqrt{n} - 2, \\
    \cx_E G &\ge 2 \sqrt{n} - 1.
\end{align*}
\end{minipage}
\vspace{1 ex}
\end{center}
Moreover, when $n$ is a square, both sets of bounds are tight.
\end{thm}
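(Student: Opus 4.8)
The plan is to reduce all four inequalities to the single statement $\pd_S G \ge 2\sqrt n - 2$ and then prove that by a combinatorial estimate on minimal vertex covers. For the reduction, the Auslander--Buchsbaum formula over $S$ gives $\pd_S G = n - \depth_S G$, so $\depth_S G \le n + 2 - 2\sqrt n$ is equivalent to $\pd_S G \ge 2\sqrt n - 2$. Granting the depth bound over $S$, Corollary~\ref{depth:compare} yields $\depth_E G \le \depth_S G - 1 \le n + 1 - 2\sqrt n$, and Theorem~\ref{auslander:buchsbaum} (which says $\depth_E G + \cx_E G = n$) then gives $\cx_E G = n - \depth_E G \ge 2\sqrt n - 1$. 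Thus every inequality follows once $\pd_S G \ge 2\sqrt n - 2$ is established, and the exterior depth bound is automatically exactly one below the symmetric one.

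By the inequality $\pd_S S/I_S(G) \ge \mathrm{bight}\, I_S(G) = \tau_{\max}(G)$ recorded just before the statement, it suffices to produce a minimal vertex cover of size at least $2\sqrt n - 2$; equivalently, since the complement of a minimal vertex cover is a maximal independent set, to find a maximal independent set of size at most $n - 2\sqrt n + 2 = (\sqrt n - 1)^2 + 1$. The engine is the recursion: for any vertex $v$, if $C'$ is a minimal vertex cover of $G \setminus N[v]$, then $N(v) \cup C'$ is a minimal vertex cover of $G$. Indeed, each $u \in N(v)$ has the private edge $uv$ since $v$ lies in no cover, while a private edge $ww'$ of $C'$ survives because both $w,w'$ lie outside $N[v]$, so $w' \notin N(v)\cup C'$. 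Hence $\tau_{\max}(G) \ge \deg(v) + \tau_{\max}(G \setminus N[v])$, and choosing $v$ of maximum degree already gives $\tau_{\max}(G) \ge \Delta(G)$.

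I would then induct on $n$, counting only non-isolated vertices. If $\Delta(G) \ge 2\sqrt n - 2$ we are done. Otherwise combine $\tau_{\max}(G) \ge \Delta + \tau_{\max}(G \setminus N[v])$ with the inductive bound $\tau_{\max}(G \setminus N[v]) \ge 2\sqrt{n'} - 2$, where $n'$ is the number of non-isolated vertices of $G \setminus N[v]$; optimizing $\Delta + 2\sqrt{n'} - 2$ against $n$ via AM--GM produces the exponent $2\sqrt n$ precisely when $n'$ is close to $n - \Delta - 1$. The crux, and the step I expect to be the main obstacle, is controlling the vertices of $G \setminus N[v]$ that become isolated when $N[v]$ is deleted: these are exactly the vertices whose whole neighborhood lies in $N(v)$, and the naive count (each of the $\Delta$ vertices of $N(v)$ kills at most $\Delta - 1$ of them) only yields the weaker exponent $\sqrt n$. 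The resolution is that such isolated vertices form pendant bouquets hanging off $N(v)$ whose leaves can instead be loaded into a minimal vertex cover via their own private edges; quantifying this trade-off is what pins down the constant $2$ and makes the estimate tight.

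For tightness when $n = m^2$, I would take $G = W(K_m, m-1)$, which has $m + m(m-1) = m^2 = n$ vertices. By Corollary~\ref{bunch:of:depths}(c) we have $\depth_E W(K_m,1) = m - 1$, so Theorem~\ref{multiple:whiskering:depth} gives $\depth_E G = (m-1)\,\depth_E W(K_m,1) = (m-1)^2 = n + 1 - 2\sqrt n$, meeting the exterior bound. Corollary~\ref{depth:compare} then forces $\depth_S G \ge \depth_E G + 1 = n + 2 - 2\sqrt n$, which together with the upper bound already proved gives equality; the two Auslander--Buchsbaum relations convert these into the corresponding equalities for $\pd_S G$ and $\cx_E G$, so all four bounds are tight for this single family.
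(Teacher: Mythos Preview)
Your reduction (first paragraph) and your tightness argument (last paragraph) are correct and coincide with the paper's: the paper likewise invokes Auslander--Buchsbaum, Corollary~\ref{depth:compare}, and Theorem~\ref{auslander:buchsbaum} for the reduction, and uses $W(K_m,m-1)$ via Theorem~\ref{multiple:whiskering:depth} and Corollary~\ref{bunch:of:depths} for tightness.

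Where you diverge from the paper is the core inequality $\pd_S G \ge 2\sqrt n - 2$. The paper does not argue this from scratch; it chains two cited results: the bound $\pd_S G \ge n - \im(G)$ of Dao--Schweig \cite[4.7]{DS13}, and the classical combinatorial fact $\im(G) \le n + 2 - 2\sqrt n$ for graphs without isolated vertices (Bollob\'as--Cockayne \cite{BC79}, Favaron \cite{Favaron88}). That finishes the inequality in one line, using the induced matching number rather than $\tau_{\max}$.

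Your route via $\pd_S G \ge \tau_{\max}(G)$ is a genuinely different idea, and the recursion $\tau_{\max}(G) \ge \deg(v) + \tau_{\max}(G\setminus N[v])$ is correctly justified. But there is a real gap, which you yourself flag: the induction only controls the \emph{non-isolated} part of $G\setminus N[v]$, and the number of newly isolated vertices can be as large as $\Delta(\Delta-1)$, which (as you note) collapses the bound to order $\sqrt n$ rather than $2\sqrt n$. Your proposed fix---``load the pendant bouquets into the cover via their private edges''---is not actually carried out, and it is not clear that it can be: the would-be isolated vertices $w$ with $N(w)\subseteq N(v)$ need not be leaves, their neighborhoods inside $N(v)$ may overlap arbitrarily, and inserting them into $N(v)\cup C'$ can destroy minimality of the $N(v)$ part (the edge $uv$ is no longer private to $u$ once you start adjusting). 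As written, the inequality $\tau_{\max}(G)\ge 2\sqrt n - 2$ is not established, so the proof is incomplete. The paper avoids this difficulty entirely by working with $\im(G)$ instead of $\tau_{\max}(G)$.
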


\begin{proof}
By \cite[4.7]{DS13}, $\pd_S G \ge n - \mathrm{im}(G)$.  That $\mathrm{im}(G) \le n + 2 - 2\sqrt{n}$ was noted first in \cite{Favaron88} and follows from \cite[Theorem 6]{BC79}.  Combining these yields $\pd_S G \ge 2\sqrt{n} - 2$.  By the Auslander-Buchsbaum Formula \cite[15.3]{graded:syzygies}, $\depth_S G \le n + 2 - 2\sqrt{n}$. The latter two inequalities follow from  Corollary~\ref{depth:compare} and Theorem~\ref{auslander:buchsbaum}.  That both bounds are tight follows from the proof of Corollary~\ref{tight:corollary}.
\end{proof}

\begin{cor} \label{tight:corollary}
Suppose $G$ is a graph on $n = m^2$ vertices with no isolated vertices. Then $\depth_E G = n + 1 - 2\sqrt{n}$ if and only if $G = W(K_m,m-1)$.
\end{cor}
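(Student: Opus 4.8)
The plan is to handle the two directions separately, with the forward (``if'') direction doubling as the tightness assertion promised in the proof of Theorem~\ref{general:bound}. For the ``if'' direction I would simply exhibit $G = W(K_m, m-1)$ and read off its depth from the whiskering machinery: this graph has $m + m(m-1) = m^2 = n$ vertices, and by Theorem~\ref{multiple:whiskering:depth} together with Corollary~\ref{bunch:of:depths}(c),
\[
\depth_E W(K_m, m-1) = (m-1)\,\depth_E W(K_m, 1) = (m-1)^2 = m^2 - 2m + 1 = n + 1 - 2\sqrt{n}.
\]
There is no obstruction here, and this computation simultaneously certifies that the bound of Theorem~\ref{general:bound} is attained.

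The substance is the ``only if'' direction, and the first step is to extract the rigid consequences of equality. Since Corollary~\ref{depth:compare} gives $\depth_E G \le \depth_S G - 1$ and Theorem~\ref{general:bound} gives $\depth_S G \le n + 2 - 2\sqrt{n}$, the hypothesis $\depth_E G = n + 1 - 2\sqrt{n}$ forces both to be equalities: $\depth_S G = n + 2 - 2\sqrt{n}$ (so $G$ is extremal for the symmetric depth bound) and $\depth_E G = \depth_S G - 1$. I would stress that one cannot merely ``trace equality through the proof of Theorem~\ref{general:bound},'' because the extremal graph need not saturate the intermediate induced-matching inequality used there — indeed $\im(W(K_m,m-1)) = 1$, far from $n+2-2\sqrt n$ — so a genuinely structural argument is required.

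Next I would translate these two equalities into resolution data. By the Auslander--Buchsbaum formula, $\pd_S G = n - \depth_S G = 2\sqrt{n} - 2 = 2m - 2$, and by Theorem~\ref{depth:via:maximal:shifts} the condition $\depth_E G = \depth_S G - 1$ is equivalent to the maximal graded shift $\max\{ j \mid \beta^S_{i,j}(S/I_S(G)) \neq 0 \}$ being exactly $\pd_S G + 1 = 2m-1$ (the resolution is linear in its top strand). The classification then proceeds in two stages. First I would invoke, or establish directly, the description of graphs on $n = m^2$ vertices with no isolated vertices achieving the maximal symmetric depth $\depth_S G = n+2-2\sqrt n$; I expect this $S$-extremal family to be strictly larger than $\{W(K_m,m-1)\}$, since already at $n=4$ both $P_4 = W(K_2,1)$ and $2K_2$ are extremal over $S$. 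The second, exterior, equality is what cuts the family down: graphs like $2K_2$, and more generally disjoint unions or other $S$-extremal graphs whose resolution is not linear at the top, satisfy $\depth_E G < \depth_S G - 1$ and are eliminated, leaving only $W(K_m,m-1)$.

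The main obstacle is precisely this classification. Concretely, I would attempt an induction that locates a whisker (a degree-one vertex) in any extremal $G$, peels it off using Corollary~\ref{deleting:vertex:with:leaf}, and tracks how the two equalities descend to $G \setminus v$ and $G \setminus N[v]$; the delicate points are (i) proving a degree-one vertex must exist and that its neighbor has the correct degree, and (ii) forcing the clique core $K_m$ together with exactly $m-1$ whiskers per core vertex and no stray edges among whiskers. A Hochster-type reformulation is a useful auxiliary device: the two equalities assert that the largest induced subgraph $G[W]$ with $\widetilde{H}_*(\mathrm{Ind}(G[W])) \neq 0$ has $\abs{W} = 2m-1$ and that this maximum is witnessed in homological degree zero (a disconnected independence complex, i.e.\ $\overline{G[W]}$ disconnected), which pins $G[W]$ to contain a large complete multipartite join. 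Verifying the base case $m=2$ by the finite check on four-vertex graphs and then making the inductive structural step uniform in $m$ is where I expect the real work to lie.
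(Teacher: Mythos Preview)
Your ``if'' direction and the opening reduction of the ``only if'' direction match the paper exactly: from $\depth_E G = (m-1)^2$ one forces $\depth_S G = (m-1)^2 + 1$ via Corollary~\ref{depth:compare} and Theorem~\ref{general:bound}, hence $\pd_S G = 2m-2$. Your observation that the proof of Theorem~\ref{general:bound} cannot simply be traced backward (since $W(K_m,m-1)$ does not saturate the induced-matching bound) is also correct, as is your expectation that the exterior equality $\depth_E G = \depth_S G - 1$ is what prunes the $S$-extremal family.

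The gap is the classification step, which you leave as an open-ended ``invoke, or establish directly'' with only an incomplete inductive sketch. The paper takes the ``invoke'' route and bypasses all of your proposed whisker-peeling and Hochster machinery: by \cite[3.1]{HH21} the equality $\pd_S G = 2m-2$ forces $\tau_{\max}(G) = 2m-2$, and \cite[5.4]{HH21} then classifies the graphs on $m^2$ vertices with no isolated vertices and $\tau_{\max}(G) = 2m-2$ as precisely $P_2 \sqcup P_2$, $C_4$, or $W(K_m, m-1)$. The first two (which arise only at $m=2$; your $n=4$ list missed $C_4$, which is Cohen--Macaulay with $\depth_S C_4 = 2$) are eliminated since both have $\depth_E = 0 \neq 1$, and the proof is finished. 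In particular, for $m \geq 3$ the $S$-extremal family is already the singleton $\{W(K_m, m-1)\}$ and the exterior constraint does no further work, contrary to your expectation of a strictly larger family requiring substantial pruning.
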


\begin{proof} Suppose $\depth_E G = n + 1 - 2\sqrt{n} = m^2 - 2m + 1$.  By Corollary~\ref{depth:compare} and Theorem \ref{general:bound}, $\depth_S G = m^2 - 2m+2$, and so, $\pd_S G = 2m-2$. By \cite[3.1]{HH21}, $\tau_{\max}(G) = 2m - 2$ and hence by \cite[5.4]{HH21}, $G$ is one of the following graphs:
\begin{enumerate}
\item $P_2 \sqcup P_2$
\item $C_4$
\item $W(K_m,m-1)$.
\end{enumerate}
By Theorem~\ref{depth:cycles}, $\depth_E C_4 = 0 \neq 1$, so  we may disregard $C_4$.  That $\depth_E P_2 \sqcup P_2 = 0 \neq 1$ follows from Proposition \ref{disjoint:union} and Corollary~\ref{depth:path:graphs}.  Finally, $\depth_E W(K_m, m-1) = (m-1)^2$ by Theorem \ref{multiple:whiskering:depth} and Corollary \ref{bunch:of:depths}, completing the proof.
\end{proof}

Next, we focus on the depth of bipartite graphs where stronger statements are possible.  We first compute the depth of Ferrers graphs.
Let $\lambda_1 > \lambda_2 > \cdots > \lambda_m > 0$ be a sequence of positive integers.  The associated \term{Ferrers graph} is the bipartite graph on vertex set $\{v_1,\ldots,v_{m}\} \sqcup \{w_1,\ldots,w_{\lambda_1}\}$ and edge set $\{\{v_i,w_j\}\mid 1 \le i \le m \text{ and } 1 \le j \le \lambda_i\}$.  Ferrers graphs are exactly those bipartite graphs with the property that, after potentially reordering the vertices, if $\{v_i,w_j\}$ is an edge then so is $\{v_k,w_l\}$ for all $1 \le k \le i$ and $1 \le l \le j$.  Corso and Nagel characterized Ferrers graphs as exactly the bipartite graphs whose symmetric edge ideals have linear free resolutions in \cite[4.2]{CN09}.  Using this, we compute the depths of edge ideals of Ferrers graphs.

\begin{prop}\label{maximal:exterior:depth:biparite:case} Suppose $G$ is a Ferrers graph  associated to a sequence of integers $\lambda_1 > \lambda_2 >\cdots >\lambda_m > 0$.  Then 
\[\depth_S G = m + \min_j\{\lambda_1 - \lambda_j - j + 1\}\]
and
\[\depth_E G = m + \min_j\{\lambda_1 - \lambda_j - j\}.\]
\end{prop}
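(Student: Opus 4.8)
The plan is to reduce everything to the exterior computation. Since $G$ is a Ferrers graph, the symmetric edge ideal $I_S(G)$ has a linear free resolution by \cite[4.2]{CN09}, so Corollary~\ref{depth:compare} gives $\depth_E G = \depth_S G - 1$ with equality. Because the two claimed formulas differ by exactly $1$ (replacing each $-j$ by $-j + 1$ adds $1$ to the minimum), it suffices to establish the exterior formula $\depth_E G = m + \min_j\{\lambda_1 - \lambda_j - j\}$, after which the symmetric formula follows immediately.

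I would prove the exterior formula by induction on $m$, the number of parts. When $m = 1$, the graph is the star $K_{1, \lambda_1}$, which has depth $0$ by Corollary~\ref{complete:multipartite:graphs} (or Corollary~\ref{depth:path:graphs} when $\lambda_1 = 1$), matching $1 + (\lambda_1 - \lambda_1 - 1) = 0$. For the inductive step, the key observation is that since $\lambda_1 > \lambda_2 > \cdots$, the vertex $w_{\lambda_1}$ is adjacent only to $v_1$ and is therefore a leaf. Applying Corollary~\ref{deleting:vertex:with:leaf} with $w = w_{\lambda_1}$ and $v = v_1$ yields
\[
\depth_E G = \min\{\depth_E(G \setminus v_1), \depth_E(G \setminus N[v_1])\}.
\]

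The next step is to identify the two deletion graphs. Since $N[v_1] = \{v_1, w_1, \ldots, w_{\lambda_1}\}$ contains every $w$-vertex, $G \setminus N[v_1]$ is just the $m - 1$ isolated vertices $v_2, \ldots, v_m$, so $\depth_E(G \setminus N[v_1]) = m - 1$ by Proposition~\ref{disjoint:union}. In $G \setminus v_1$ the vertices $w_{\lambda_2 + 1}, \ldots, w_{\lambda_1}$ become isolated, while $v_2, \ldots, v_m$ together with $w_1, \ldots, w_{\lambda_2}$ form the Ferrers graph $G'$ of the sequence $\lambda' = (\lambda_2, \ldots, \lambda_m)$; thus $G \setminus v_1 = G' \sqcup (\lambda_1 - \lambda_2\ \text{isolated vertices})$ and Proposition~\ref{disjoint:union} gives $\depth_E(G \setminus v_1) = \depth_E G' + (\lambda_1 - \lambda_2)$. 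Feeding in the inductive formula for $G'$ and simplifying the index-shifted minimum should show that this term equals $m + \min_{2 \le j \le m}\{\lambda_1 - \lambda_j - j\}$, whereas $m - 1 = m + (\lambda_1 - \lambda_1 - 1)$ is exactly the $j = 1$ entry; taking the minimum then recovers $m + \min_{1 \le j \le m}\{\lambda_1 - \lambda_j - j\}$.

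The calculations are routine, so the only delicate point is the bookkeeping in the induction: I must verify that the two branches of the minimum coming from Corollary~\ref{deleting:vertex:with:leaf} correspond precisely to the $j = 1$ term and the $j \ge 2$ terms of the target minimum, and that the reindexing $\lambda'_k = \lambda_{k+1}$ (with $\lambda'_1 = \lambda_2$) combines correctly with the $\lambda_1 - \lambda_2$ isolated vertices to reproduce the differences $\lambda_1 - \lambda_j - j$. This is the main thing to get right; everything else is a direct application of the recursive depth tools already established.
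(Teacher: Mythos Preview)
Your argument is correct, but it follows a genuinely different route from the paper's proof. The paper works entirely on the symmetric side first: it cites the explicit formula $\pd_S G = \max_j\{\lambda_j + j - 1\}$ from \cite[2.2]{CN09}, applies the Auslander--Buchsbaum formula with $n = \lambda_1 + m$ to obtain $\depth_S G = m + \min_j\{\lambda_1 - \lambda_j - j + 1\}$, and only then invokes the linear-resolution case of Corollary~\ref{depth:compare} to deduce $\depth_E G = \depth_S G - 1$. You run the logic in the opposite direction, establishing $\depth_E G$ by induction on $m$ via Corollary~\ref{deleting:vertex:with:leaf} and then reading off $\depth_S G$ from the same equality. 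The reindexing you flag is indeed the only thing to watch, and it works: with $\lambda'_k = \lambda_{k+1}$ one gets $\depth_E(G\setminus v_1) = (\lambda_1-\lambda_2) + (m-1) + \min_{k}\{\lambda_2-\lambda_{k+1}-k\} = m + \min_{2\le j\le m}\{\lambda_1-\lambda_j-j\}$, while $m-1$ is exactly the $j=1$ term, so the two branches combine as claimed.

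The trade-off is that the paper's proof is a three-line citation of known commutative results, whereas yours is longer but more internal to the paper: it uses only the recursive exterior tools developed in Section~\ref{section:singularvariety} together with the linear-resolution characterization \cite[4.2]{CN09}, and avoids importing the Corso--Nagel projective dimension formula altogether. Your approach thus gives an independent verification of that formula (via Auslander--Buchsbaum in reverse) and serves as a nice illustration of Corollary~\ref{deleting:vertex:with:leaf} in action; note that it relies on the strict inequalities $\lambda_1 > \lambda_2 > \cdots$ assumed here so that $w_{\lambda_1}$ is genuinely a leaf.
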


\begin{proof} Let $n = \lambda_1 + m$ be the number of vertices of $G$.  It follows from \cite[2.2]{CN09} that $\pd_S G = \max_j \{\lambda_j + j - 1\}$.  By the Auslander-Buchsbaum Formula, $\depth_S G = n - \pd_S G = m + \min_j\{\lambda_1 - \lambda_j - j + 1\}$.  Since $I_S(G)$ has a linear free resolution, it follows from Corollary~\ref{depth:compare} that $\depth_E G = m + \min_j\{\lambda_1 - \lambda_j - j \}$.
\end{proof}

Finally, we give a tight upper bound on the depth of bipartite graphs.

\begin{thm}\label{bipartitebound} Let $G$ be a bipartite graph on $n$ vertices none of which are isolated.  Then
\begin{center}
\begin{minipage}{0.4\textwidth}
\begin{align*}
    \depth_S G &\le \left\lfloor \frac{n}{2} \right \rfloor,\\[0.5 ex]
    \depth_E G &\le \left\lfloor \frac{n}{2} \right \rfloor - 1,
\end{align*}
\end{minipage}
\begin{minipage}{0.4\textwidth}
\begin{align*}
    \pd_S G &\ge \ceiling{\frac{n}{2}}, \\[0.5 ex]
    \cx_E G &\ge \ceiling{\frac{n}{2}} + 1.
\end{align*}
\end{minipage}
\vspace{1 ex}
\end{center}
Moreover, both sets of bounds are tight.
\end{thm}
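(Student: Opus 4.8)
The plan is to reduce all four inequalities to the single estimate $\pd_S G \ge \lceil n/2\rceil$ and then exhibit a family of graphs realizing equality. Indeed, the Auslander--Buchsbaum formula over $S$ gives $\depth_S G = n - \pd_S G$, so $\pd_S G \ge \lceil n/2\rceil$ is equivalent to $\depth_S G \le \lfloor n/2\rfloor$. Corollary~\ref{depth:compare} then yields $\depth_E G \le \depth_S G - 1 \le \lfloor n/2\rfloor - 1$, and Theorem~\ref{auslander:buchsbaum} converts this into $\cx_E G = n - \depth_E G \ge \lceil n/2\rceil + 1$. Thus the entire first claim follows once $\pd_S G$ is bounded from below.

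To get $\pd_S G \ge \lceil n/2\rceil$, I would produce a large minimal vertex cover and invoke the inequality $\tau_{\max}(G) = \mathrm{bight}\, I_S(G) \le \pd_S G$ recorded just before the theorem. Write the bipartition as $V = X \sqcup Y$. Since $G$ has no isolated vertices, every vertex of $Y$ has a neighbor, and all of its neighbors lie in $X$; hence $X$ cannot be enlarged to a larger independent set, so $X$ is a maximal independent set and its complement $Y$ is a minimal vertex cover. By symmetry $X$ is also a minimal vertex cover, and the larger of $X$, $Y$ has at least $\lceil n/2\rceil$ vertices. Therefore $\tau_{\max}(G) \ge \lceil n/2\rceil$ and $\pd_S G \ge \lceil n/2\rceil$, as needed.

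For tightness I would use Ferrers graphs, whose depths are computed in Proposition~\ref{maximal:exterior:depth:biparite:case}. For a strictly decreasing sequence $\lambda$ the quantity $\lambda_j + j - 1$ is non-increasing in $j$, so $\pd_S G = \max_j\{\lambda_j + j - 1\} = \lambda_1$; hence \emph{every} Ferrers graph satisfies $\depth_S G = n - \lambda_1 = m$ and $\depth_E G = m - 1$, independent of the shape. It then suffices to choose $\lambda$ with $m = \lfloor n/2\rfloor$: for even $n = 2m$ take the staircase $\lambda = (m, m-1, \dots, 1)$, and for odd $n = 2m+1$ take $\lambda = (m+1, m, \dots, 2)$. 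Both are strictly decreasing with $\lambda_1 + m = n$ and have no isolated vertices, and in each case $\depth_S G = m = \lfloor n/2\rfloor$ and $\depth_E G = m - 1 = \lfloor n/2\rfloor - 1$, from which $\pd_S G = \lceil n/2\rceil$ and $\cx_E G = \lceil n/2\rceil + 1$ follow. Thus both sets of bounds are simultaneously sharp for every $n \ge 2$.

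The routine part is assembling the homological dictionary; the only genuine ideas are the observation that each bipartition class of a bipartite graph with no isolated vertices is itself a minimal vertex cover (which immediately forces the projective dimension up), together with the recognition that the staircase Ferrers graphs hit the bound exactly. I expect the main thing to watch to be the tightness bookkeeping --- checking that the chosen $\lambda$ are strictly decreasing, give the correct vertex count in both parities, and avoid isolated vertices --- rather than any serious obstruction.
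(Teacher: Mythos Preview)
Your argument is correct and matches the paper's proof essentially line for line: both bound $\pd_S G$ below via $\tau_{\max}(G)$ by noting that each bipartition class is a minimal vertex cover, derive the other three inequalities from Auslander--Buchsbaum, Corollary~\ref{depth:compare}, and Theorem~\ref{auslander:buchsbaum}, and realize tightness via the same staircase Ferrers graphs $\lambda_i = \lceil n/2\rceil - i + 1$. Your additional remark that $\lambda_j + j - 1$ is non-increasing for strictly decreasing $\lambda$, so that \emph{every} Ferrers graph (under the paper's strict-inequality convention) already has $\depth_S G = m$, is a nice simplification the paper does not make explicit.
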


\begin{proof} Let $G$ be bipartite on vertex set $V = A \sqcup B$.  Then each of $A$ and $B$ are minimal vertex covers of $G$.  So $\ceiling{\frac{n}{2}} \le \max\{\abs{A},\abs{B}\} \le \pd_S G$, and again by the Auslander-Buchsbaum Formula, $\depth_S G = n - \pd_S G \le n - \ceiling{\frac{n}{2}} = \floor{\frac{n}{2}}$.  The second two inequalities again follow from Corollary~\ref{depth:compare} and Theorem~\ref{auslander:buchsbaum}.  That both bounds are tight follows by considering the Ferrers graph associated to the sequence $\lambda_i = \ceiling{\frac{n}{2}} -i + 1$ for $1 \le i \le \floor{\frac{n}{2}}$ and applying the previous proposition.
\end{proof}

It seems  difficult to completely characterize bipartite graphs $G$ satisfying the equality $\depth_S G = \floor{\frac{n}{2}}$.  Indeed, such graphs include all Cohen-Macaulay bipartite graphs, Ferrers graphs with $\lambda_i \le m-i+1$ from the previous corollary, and at least some graphs which are neither.  There are also non-Ferrers bipartite graphs satisfying $\depth_E G = \floor{\frac{n}{2}} - 1$ such as the following.

\begin{example} Let $G$ be the following bipartite graph.
\begin{center}
\labeledgraph{
    10/0/6/v_6/90,
    8/0/5/v_5/90,
    6/0/4/v_4/90,
    4/0/3/v_3/90,
    2/0/2/v_2/90,
    0/0/1/v_1/90,
    0/-3/7/v_7/270,
    2/-3/8/v_8/270,
    4/-3/9/v_9/270,
    6/-3/10/v_{10}/270,
    8/-3/11/v_{11}/270,
    10/-3/12/v_{12}/270}{
    1/7,
    1/8,
    1/9,
    1/10,
    1/11,
    1/12,
    2/7,
    2/8,
    2/9,
    2/10,
    2/11,
    3/7,
    3/9,
    4/7,
    4/8,
    5/7,
    6/7}
    \end{center}
    The corresponding edge ideal $I_S(G)$ has the following Betti table.
    
   \begin{center}\vspace{.2cm}
    \begin{tabular}{r|ccccccc}
      &0&1&2&3&4&5&6\\
      \hline
      \text{0:}&1&\text{-}&\text{-}&\text{-}&\text{-}&\text{-}&\text{-}\\\text{1:}&\text{-}&17&50&66&47&18&3\\\text{2:}&\text{-}&\text{-}&1&3&3&1&\text{-}\\
      \end{tabular}
      \end{center}\vspace{.2cm}
      
\noindent It follows from Corollary~\ref{depth:compare} that $\depth_S G = 6 = \frac{12}{2}$ and $\depth_E G = 5  = \frac{12}{2} - 1$.  Yet, since $\reg_S G = 2$, $G$ is not a Ferrers graph, and since $(x_1,x_2,x_7,x_8,x_9)$ is a minimal prime of height $5$, $I_S(G)$ is not Cohen-Macaulay.
\end{example}

\section*{Acknowledgements}
Computations with Macaulay2 \cite{M2} were very helpful while working on this project.  Mastroeni was supported by an AMS-Simons Travel Grant.  McCullough was supported by National Science Foundation grant DMS--1900792. 

\end{spacing}

\bibliographystyle{alpha}
\bibliography{edge-ideals}

\end{document}